\numberwithin{equation}{section}
\newtheorem{Theorem}{Theorem}[section]
\newtheorem{Corollary}[Theorem]{Corollary}
\newtheorem{Lemma}[Theorem]{Lemma}
\newtheorem{Conjecture}[Theorem]{Conjecture}
\newtheorem{Proposition}[Theorem]{Proposition}
 { \theoremstyle{definition}
\newtheorem{Definition}[Theorem]{Definition}
\newtheorem{Remark}[Theorem]{Remark}}
\newcommand{\R}{\mathbb {R}}
\newcommand{\Z}{\mathbb {Z}}
\newcommand{\F}{\mathbb {F}}
\newcommand{\alg}{\mathcal{A}}
\newcommand{\wt}{\widetilde}
\newcommand{\aug}{{\rm Aug}}
\newcommand{\leg}{\mathfrak{Leg}}
\def\wt#1{\widetilde{#1}}
\begin{document}

\allowdisplaybreaks

\newcommand{\arXivNumber}{1908.08978}

\renewcommand{\thefootnote}{}

\renewcommand{\PaperNumber}{017}

\FirstPageHeading

\ShortArticleName{Legendrian DGA Representations and the Colored Kauffman Polynomial}

\ArticleName{Legendrian DGA Representations\\ and the Colored Kauffman Polynomial\footnote{This paper is a~contribution to the Special Issue on Algebra, Topology, and Dynamics in Interaction in honor of Dmitry Fuchs. The full collection is available at \href{https://www.emis.de/journals/SIGMA/Fuchs.html}{https://www.emis.de/journals/SIGMA/Fuchs.html}}}

\Author{Justin MURRAY~$^\dag$ and Dan RUTHERFORD~$^\ddag$}

\AuthorNameForHeading{J.~Murray and D.~Rutherford}

\Address{$^\dag$~Department of Mathematics, 303 Lockett Hall, Louisiana State University,\\
\hphantom{$^\dag$}~Baton Rouge, LA 70803-4918, USA}
\EmailD{\href{mailto:jmurr24@lsu.edu}{jmurr24@lsu.edu}}

\Address{$^\ddag$~Department of Mathematical Sciences, Ball State University,\\
\hphantom{$^\ddag$}~2000 W.~University Ave., Muncie, IN 47306, USA}
\EmailD{\href{mailto:rutherford@bsu.edu}{rutherford@bsu.edu}}

\ArticleDates{Received August 28, 2019, in final form March 10, 2020; Published online March 22, 2020}

\Abstract{For any Legendrian knot $K$ in standard contact $\R^3$ we relate counts of ungraded ($1$-graded) representations of the Legendrian contact homology DG-algebra $(\mathcal{A}(K),\partial)$ with the $n$-colored Kauffman polynomial. To do this, we introduce an ungraded $n$-colored ru\-ling polynomial, $R^1_{n,K}(q)$, as a linear combination of {\it reduced} ruling polynomials of positive permutation braids and show that (i) $R^1_{n,K}(q)$ arises as a specialization $F_{n,K}(a,q)\big|_{a^{-1}=0}$ of the $n$-colored Kauffman polynomial and (ii) when $q$ is a power of two $R^1_{n,K}(q)$ agrees with the total ungraded representation number, $\operatorname{Rep}_1\big(K, \mathbb{F}_q^n\big)$, which is a normalized count of $n$-dimensional representations of $(\mathcal{A}(K),\partial)$ over the finite field $\mathbb{F}_q$. This complements results from~[Leverson C., Rutherford D., \textit{Quantum Topol.} \textbf{11} (2020), 55--118] concerning the colored HOMFLY-PT polynomial, $m$-graded representation numbers, and $m$-graded ruling polynomials with $m \neq 1$.}

\Keywords{Legendrian knots; Kauffman polynomial; ruling polynomial; augmentations}

\Classification{53D42; 57M27}

\renewcommand{\thefootnote}{\arabic{footnote}}
\setcounter{footnote}{0}

\rightline{\it To Dmitry Fuchs on his 80th birthday with gratitude and admiration!}

\section{Introduction}

The results of this article strengthen the connection between invariants of Legendrian knots in standard contact $\R^3$ and the $2$-variable Kauffman polynomial. Relations between the $2$-variable knot polynomials (HOMFLY-PT and Kauffman) and Legendrian knot theory were first realized in the work of Fuchs and Tabachnikov \cite{FT} who observed, based on results of Bennequin \cite{Be}, Franks--Morton--Williams \cite{FW}, and Rudolph \cite{Ru}, that these polynomials provide upper bounds on the Thurston--Bennequin number of a Legendrian knot. At that time, it was still unknown whether Legendrian knots in $\R^3$ were determined up to Legendrian isotopy by their Thurston--Bennequin number, rotation number, and underlying smooth knot type (the so-called ``classical invariants'' of Legendrian knots). This question was soon resolved with the introduction of several non-classical invariants in the late 90's and early 2000's including the Legendrian contact homology algebra which is a differential graded algebra (DGA) coming from $J$-holomorphic curve theory that was constructed by Chekanov in \cite{Chekanov02} and discovered independently by Eliashberg and Hofer, and combinatorial invariants introduced by Chekanov and Pushkar \cite{ChP} defined by counting certain decompositions of front diagrams called normal rulings.\footnote{Around the same time, generating family homology invariants capable of distinguishing Legendrian links with the same classical invariants were introduced by Traynor~\cite{Tr}.} Interestingly, normal rulings were discovered independently by Fuchs in connection with augmentations of the Le\-gendrian contact homology DGA. Moreover, Fuchs again pointed toward a connection between Legendrian invariants and topological knot invariants by conjecturing in \cite{Fuchs} that a Legendrian knot should have a normal ruling if and only if the Kauffman polynomial estimate for the Thurston--Bennequin number is sharp. This conjecture was resolved affirmatively in \cite{R1} by interpreting Chekanov and Pushkar's combinatorial invariants as polynomials, and showing that the ungraded ruling polynomial, $R^1_K(z)$, of a Legendrian knot $K \subset \R^3$ arises as a specialization
\begin{gather} \label{eq:specialization}
R^1_K(z) = F_K(a,z)|_{a^{-1}=0}
\end{gather}
of the framed version of the Kauffman polynomial $F_K \in \Z\big[a^{\pm1}, z^{\pm1}\big]$; the specialization has the property that it is non-zero if and only if the Kauffman polynomial estimate for $\operatorname{tb}(K)$ is sharp. An analogous result also established in~\cite{R1} holds for the $2$-graded ruling polynomial, $R^2_K(z)$, and the HOMFLY-PT polynomial.

Initially, the Legendrian invariance of the ruling polynomials, based on establishing bijections between rulings during bifurcations of the front diagram occurring in a Legendrian isotopy, was somewhat mysterious from the point of view of symplectic topology. Building on the earlier works \cite{Fuchs, FI,Henry,NgSab, Sab}, Henry and the second author showed in~\cite{HenryRu} that the ruling polynomials are in fact determined by the Legendrian contact homology DGA, $(\mathcal{A}, \partial)$, since their specializations at $z = q^{1/2} -q^{-1/2}$ with $q$ a prime power agree with normalized counts of augmentations of $(\mathcal{A}, \partial)$ to the finite field $\mathbb{F}_q$, i.e., DGA representations from $(\mathcal{A},\partial)$ to $(\mathbb{F}_q,0)$. Thus, in the ungraded case (\ref{eq:specialization}) shows that counts of ungraded augmentations are actually topological (depending only on the underlying framed knot type of $K$), as they arise from a specialization of the Kauffman polynomial. In this article we extend this result by relating counts of higher dimensional (ungraded) representations of $(\mathcal{A}, \partial)$ with the $n$-colored Kauffman polynomials.

To give a statement of our main result, for $n \geq 1$, let $\operatorname{Rep}_1\big(K,\mathbb{F}_q^n\big)$ denote the {\it ungraded total $n$-dimensional representation number} of $K$ as defined in~\cite{LeRu}; see Definition \ref{def:total}. Let $F_{n,K}(a,q)$ denote the {\it $n$-colored Kauffman polynomial} (for framed knots); see Definition~\ref{def:Kauffman}. In Section~\ref{sec:ncolored}, we define an {\it ungraded $n$-colored ruling polynomial} denoted~$R^1_{n,K}(z)$.

\begin{Theorem} \label{thm:main} For any Legendrian knot $K$ in $\R^3$ with its standard contact structure and any $n \geq 1$, there is a well-defined specialization $F_{n,K}(a,q)|_{a^{-1}=0}$, and we have
\[
\operatorname{Rep}_1\big(K,\mathbb{F}_q^n\big)=R^1_{n,K}(z)=F_{n,K}(a,q)|_{a^{-1}=0}.
\]
\end{Theorem}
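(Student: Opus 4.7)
\medskip

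\noindent\textbf{Proof sketch (proposal).} The plan is to prove the two equalities separately: first the \emph{topological} one $R^1_{n,K}(z)=F_{n,K}(a,q)|_{a^{-1}=0}$, and then the \emph{symplectic} one $\operatorname{Rep}_1(K,\mathbb{F}_q^n)=R^1_{n,K}(z)$, with the latter reducing to the single-component Henry--Rutherford statement via a satellite argument.

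For the Kauffman side, I would first recall the skein-theoretic construction of $F_{n,K}(a,q)$: it is obtained by cabling $K$ to its $n$-copy and inserting a distinguished element of the Kauffman/BMW skein algebra of the annulus corresponding to the ``$n$-row" idempotent. The proof of $R^1_{n,K}(z)=F_{n,K}(a,q)|_{a^{-1}=0}$ then has two ingredients. The first is a specialization lemma at $a^{-1}=0$ for the BMW algebra on $n$ strands: one shows that the relevant idempotent can be written as a $\Z\big[q^{\pm1/2},z\big]$-linear combination of positive permutation braids (that is, the negative-crossing and turnback generators drop out in the limit), and that no negative power of $a$ appears in any of the coefficients, which establishes well-definedness of the specialization. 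This matches the combinatorial recipe that defines $R^1_{n,K}(z)$ in Section~\ref{sec:ncolored} as a linear combination of reduced ruling polynomials of positive permutation braid satellites. The second ingredient is that, for any positive braid $\beta$, the Kauffman polynomial of the satellite $K\ast\beta$ specialized at $a^{-1}=0$ is the ungraded ruling polynomial of $K\ast\beta$; this is the $n=1$ Fuchs conjecture proved in \cite{R1}, applied componentwise. Combining these yields the second equality, with the main technical burden being the BMW computation at $a^{-1}=0$.

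For the representation side, the strategy is to reduce the $n$-dimensional count to a $1$-dimensional count on a satellite. Following \cite{LeRu}, an ungraded $n$-dimensional representation of $(\mathcal{A}(K),\partial)$ over $\F_q$ is, after a suitable change of basis, equivalent data to an ungraded augmentation of $(\mathcal{A}(K\ast\beta),\partial)$ over $\F_q$, summed over positive permutation braids $\beta\in S_n$ with weights that encode both the combinatorics of Borel reductions and the base change by $\operatorname{GL}_n(\F_q)$. Taking this as input, I would write $\operatorname{Rep}_1(K,\F_q^n)$ as an explicit $\Z[q^{\pm 1/2}]$-linear combination of (reduced) augmentation numbers of the satellites $K\ast\beta$, and then invoke the ungraded case of \cite{HenryRu} to replace each augmentation count by the corresponding reduced ruling polynomial of $K\ast\beta$ evaluated at $z=q^{1/2}-q^{-1/2}$. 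The weights in this expression are designed (by construction of $R^1_{n,K}$ in Section~\ref{sec:ncolored}) to match the weights in the definition of $R^1_{n,K}(z)$, which gives the first equality.

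The hard part is the skein-algebra degeneration at $a^{-1}=0$: one must identify the colored Kauffman ``idempotent'' as an honest polynomial (no $a^{-1}$ denominators) whose $a^{-1}=0$ reduction is exactly the linear combination of positive permutation braids defining $R^1_{n,K}(z)$. If the relevant idempotent is only defined over a localization at $(a-a^{-1})$ or similar, then one needs a careful clearing-denominators argument using the framing normalization of $F_{n,K}$ to cancel any potential negative powers of $a$. Once this matching of coefficients between the BMW side and the satellite side is in hand, both equalities in Theorem~\ref{thm:main} follow, with all three quantities expressed as the same $\Z\big[q^{\pm 1/2}\big]$-linear combination of reduced ungraded ruling polynomials of positive permutation braid satellites of $K$.
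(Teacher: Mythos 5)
Your overall division into the two equalities matches the paper's structure, but both halves as sketched contain genuine gaps, and the one on the Kauffman side is fatal as stated. Your key claim there is that at $a^{-1}=0$ the BMW symmetrizer becomes a linear combination of positive permutation braids, with ``the negative-crossing and turnback generators drop out in the limit.'' This is false: already for $n=2$ one has $\mathcal{Y}_2=\frac{1}{c_2}\big(1+s\sigma_1+\frac{sza}{1-sa}e_1\big)$, and the coefficient $\frac{sza}{1-sa}$ has $a$-degree zero, so it specializes to $-z$ rather than $0$; the limit is $\frac{1}{c_2}(1+s\sigma_1-ze_1)$, which retains a turnback term. (Relatedly, the symmetrizer is not ``an honest polynomial'' in $a$; the paper handles the denominators via the subring $\mathbb{F}^-$ of rational functions of non-positive $a$-degree and the estimate $\deg_a F_L\leq 0$, not by clearing them.) Moreover, even if the turnbacks did vanish, applying the $n=1$ result of \cite{R1} to the satellites $S(K,\beta)$ would produce \emph{ordinary} satellite ruling polynomials $R^1_{S(K,\beta)}$, whereas $R^1_{n,K}$ is defined from \emph{reduced} ruling polynomials $\widetilde{R}_{S(K,\beta)}$; these do not coincide, and bridging this mismatch is precisely the content of the paper's Theorem~\ref{thm:main1}, which re-expresses $R^1_{n,K}$ as $\frac{1}{c_n}R^1_{S(K,L_n)}$ for an explicitly constructed element $L_n\in\operatorname{BMW}_n^{\leg}$ containing turnback tangles, and of Propositions~\ref{prop:BMW} and~\ref{prop:almostmainthm}, which match $\varphi\big(\frac{1}{c_n}L_n\big)$ with $\mathcal{Y}_n\vert_{a^{-1}=0}$ using the Heckenberger--Sch\"uler inductive formula \cite{Heck}. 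None of this work is replaced by anything in your sketch.

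On the representation side your plan is closer to the paper's, but it elides the two points that make the ungraded case hard. First, the bijection of \cite[Theorem~6.1]{LeRu} in the $m=1$ setting identifies augmentations of $S(K,\beta)$ with representations on $\big(\F^n,d\big)$ where $d$ ranges over \emph{all} strictly upper triangular differentials, not just $d=0$; since $\operatorname{Rep}_1\big(K,\F_q^n\big)$ counts only $d=0$, one must restrict to augmentations vanishing on the $Y$-generators (Proposition~\ref{prop:LeRu}) and then show that under the Henry--Rutherford decomposition these correspond exactly to \emph{reduced} normal rulings --- \cite{HenryRu} by itself gives the full ruling polynomial, not the reduced one, so ``invoking the ungraded case of \cite{HenryRu}'' is not enough; the refinement via the MCS forms is proved in Step~5 of the paper's Lemma~\ref{lem:count}. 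Second, \cite{LeRu} and \cite{HenryRu} use different Lagrangian diagrams for $S(K,\beta)$ (with different base point placements and dip structures), and reconciling the resulting augmentation counts, with the attendant powers of $q$ and $q-1$, occupies Steps~1--4 of that lemma and produces the normalization constants that make the final identity come out to $R^1_{n,K}(q)$. Your sketch assumes these bookkeeping factors ``are designed to match,'' but verifying that is exactly the content of Section~5 and cannot be taken for granted.
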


As an immediate consequence, we get:

\begin{Corollary}
The ungraded total $n$-dimensional representation number $\operatorname{Rep}_1\big(K,\mathbb{F}_q^n\big)$ depends only on the underlying framed knot type of $K$.
\end{Corollary}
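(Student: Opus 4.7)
The plan is to invoke Theorem~\ref{thm:main} directly. That theorem supplies the identification
\[
\operatorname{Rep}_1\big(K, \mathbb{F}_q^n\big) = F_{n,K}(a,q)\big|_{a^{-1}=0},
\]
and simultaneously asserts that the specialization on the right-hand side is well-defined. So the only remaining point is to observe that $F_{n,K}(a,q)\big|_{a^{-1}=0}$ depends solely on the underlying framed knot type of $K$.

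This framed-knot invariance is intrinsic to the colored Kauffman polynomial: by Definition~\ref{def:Kauffman}, $F_{n,K}(a,q)$ is constructed from the Kauffman skein relations applied to a framed link diagram (equivalently, via the Kauffman skein of the solid torus applied along a framed $n$-cabling of $K$), a procedure that refers only to the smooth framed knot type and makes no mention of the Legendrian or contact structure. Any specialization of a framed-knot invariant remains one, and in particular so does $F_{n,K}(a,q)\big|_{a^{-1}=0}$.

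Chaining these two observations yields the corollary. I do not anticipate any genuine obstacle: the result is a corollary in the strict sense, with all substantive content packaged in Theorem~\ref{thm:main}; the only point requiring care---well-definedness of the specialization $a^{-1}=0$---is already discharged inside that theorem's statement and proof. Note that, a priori, $\operatorname{Rep}_1\big(K,\mathbb{F}_q^n\big)$ is only an invariant of Legendrian isotopy (and even the Thurston--Bennequin and rotation numbers could, in principle, enter through the grading on $(\mathcal{A}(K),\partial)$); the content of the corollary is precisely that, once passed through the identification with the Kauffman specialization, this Legendrian-looking quantity is in fact purely topological.
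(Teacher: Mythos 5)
Your proof is correct and matches the paper's own reasoning: the paper states this corollary as an immediate consequence of Theorem~\ref{thm:main}, exactly as you argue, since $F_{n,K}(a,q)$ is by Definition~\ref{def:Kauffman} an invariant of the framed knot type and the specialization $a^{-1}=0$ is well-defined by Theorem~\ref{thm:main2}. No substantive difference from the paper's approach.
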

The corollary is a significant strengthening of a result from~\cite{NgR2012} that the existence of an ungraded representation of~$(\mathcal{A}, \partial)$ on $\mathbb{F}_2^n$ depends only on the Thurston--Bennequin number and topological knot type of~$K$. Precisely how much of the Legendrian contact homology DGA is determined by the framed knot type of~$K$ remains an interesting question. See~\cite{NgR2012} and~\cite{Ng1} for some open conjectures along this line.

A previous article \cite{LeRu} establishes analogous results in the case of $2$-graded representations and the HOMFLY-PT polynomial, and in fact establishes the equality $\operatorname{Rep}_m\big(K,\mathbb{F}_q^n\big)=R^m_{n,K}(z)$ between the $m$-graded total representation numbers and $m$-graded colored ruling polynomials for all $m \in \Z_{\geq 0}$ {\it except for the ungraded case where $m=1$}.  The $m=1$ case is more involved for a number of reasons. In the following we briefly review the argument from~\cite{LeRu} and then outline our approach to Theorem~\ref{thm:main}.

For $m \neq 1$, the $n$-colored ruling polynomial is defined as a linear combination of satellite ruling polynomials of the form
\begin{gather} \label{eq:mgraded}
R^m_{n,K}(q) = \frac{1}{c_n}\sum_{\beta \in S_n} q^{\lambda(\beta)/2} R^m_{S(K,\beta)}(z)\big|_{z = q^{1/2}-q^{-1/2}}, \qquad m \neq 1,
\end{gather}
where \looseness=1 $S(K,\beta)$ is the Legendrian satellite of $K$ with a Legendrian positive permutation braid associated to $\beta \in S_n$. The same linear combination of HOMFLY-PT polynomials defines the $n$-colored HOMFLY-PT polynomial. In~\cite{LeRu}, the total $n$-dimensional representation number is recovered from~(\ref{eq:mgraded}) via a bijection between $m$-graded augmentations of $S(K,\beta)$ and $n$-dimensional representations of the DGA of $K$ mapping a distinguished invertible generator into $B_\beta \subset {\rm GL}(n, \mathbb{F})$ where ${\rm GL}(n,\mathbb{F}) = \sqcup_{\beta \in S_n}B_\beta$ is the Bruhat decomposition. Thus, summing over all $\beta \in S_n$ corresponds to considering all $n$-dimensional representations of the DGA of~$K$ on~$\mathbb{F}^n$.

When $m=1$, the above bijection becomes modified in an interesting way, as augmentations of $S(K,\beta)$ now correspond to (ungraded) representations of $(\mathcal{A}, \partial)$ on differential vector spaces of the form $\big(\mathbb{F}^n,d\big)$ where $d$ varies over all (ungraded) upper triangular differential on $\mathbb{F}^n$. (When $m \neq 1$, $d =0$ is automatic for grading reasons.) The total $n$-dimensional representation number, $\operatorname{Rep}_1\big(K,\mathbb{F}_q^n\big)$, only counts representations with $d=0$, so the definition of $R^1_{n,K}$ needs to be changed to only take into account normal rulings corresponding to representations with $d=0$. This is done by replacing each $R_{S(K,\beta)}^1$ in~(\ref{eq:mgraded}) with the corresponding {\it reduced ruling polynomial}~$\widetilde{R}^1_{S(K,\beta)}$ as introduced in~\cite{NgR2012} that only counts normal rulings of $S(K,\beta)$ that never pair two strands of the satellite that correspond to a single strand of $K$. Up to a technical point about the use of different diagrams in~\cite{LeRu} and~\cite{HenryRu} that the bulk of Section~\ref{sec:5} is spent addressing, this leads to the equality $\operatorname{Rep}_1\big(K,\mathbb{F}_q^n\big)=R^1_{n,K}(z)$.

Establishing that $R^1_{n,K}(z)=F_{n,K}(a,q)|_{a^{-1}=0}$ requires a much more involved argument than for the case of the colored HOMFLY-PT polynomial and $R^2_{n,K}(z)$ (where the result is immediate from~\cite{R1} and the definition). The $n$-colored Kauffman polynomial is defined by satelliting $K$ with the symmetrizer in the BMW algebra, $\mathcal{Y}_n \in \operatorname{BMW}_n$. In addition to a sum over permutation braids as in the HOMFLY-PT case, $\mathcal{Y}_n$ also has terms of a less explicit nature (though, see~\cite{D}) involving tangle diagrams in $[0,1]\times \R$ with turn-backs, i.e., components that have both endpoints on the same boundary component of $[0,1]\times \R$. To relate $R^1_{n,K}$ and $F_{n,K}$, we use the combinatorics of normal rulings to find an inductive characterization of $R^1_{n,K}$ in terms of ordinary ruling polynomials rather than reduced ruling polynomials, and then compare this with an inductive characterization of $\mathcal{Y}_n$ due to Heckenberger and Sch{\"u}ler~\cite{Heck}.

The remainder of the article is organized as follows. In Section \ref{sec:ncolored}, we define the ungraded $n$-colored ruling polynomial and establish an inductive characterization of it in Theorem \ref{thm:main1}. In Section \ref{sec:Kauffman}, we recall the definition of the colored Kauffman polynomial and prove the second equality of Theorem \ref{thm:main} (see Theorem~\ref{thm:main2}). Section \ref{sec:5} reviews definitions of representation numbers from \cite{LeRu} and then establishes the first equality of Theorem~\ref{thm:main} (see Theorem~\ref{thm:Rep}). In Section \ref{sec:multicomp}, we close the article with a brief discussion of a modification of Theorem \ref{thm:main} for the case of multi-component Legendrian links.

\section[The $n$-colored ungraded ruling polynomial]{The $\boldsymbol{n}$-colored ungraded ruling polynomial} \label{sec:ncolored}

In this section, after a brief review of ruling polynomials and Legendrian satellites, we define the ungraded $n$-colored ruling polynomial $R^1_{n,K}$ as a linear combination of {\it reduced} ruling polynomials indexed by permutations $\beta \in S_n$. Reduced rulings, considered earlier in~\cite{NgR2012}, form a restricted class of normal rulings of satellite links, so that it is not immediately clear how to describe $R^1_{n,K}$ in terms of ordinary ruling polynomials. For this purpose, we work in a Legendrian version of the $n$-stranded BMW algebra, $\operatorname{BMW}_n^\leg$, and inductively construct elements $L_n \in \operatorname{BMW}_n^\leg$ that can be used to produce $R^1_{n,K}$ via (non-reduced) satellite ruling polynomials.

\subsection{Legendrian fronts and ruling polynomials}
In this article we consider Legendrian links and tangles in a $1$-jet space $J^1M$ where $M$ is one of~$\R$,~$S^1$, or~$[0,1]$. In all cases, we can view $J^1M = T^*M\times \R$ as $M \times \R^2$, and using coordinates $(x,y,z)$ with $x \in M$ and $y,z \in \R$ the contact form is ${\rm d}z - y\,{\rm d}x$. Legendrian curves can be viewed via their {\it front projection} $\pi_{xz}\colon J^1M \rightarrow M \times \R$, $(x,y,z) \mapsto (x,z)$ which is a collection of curves having cusp singularities and transverse double points but no vertical tangencies. The original Legendrian is recovered via $y= \frac{{\rm d}z}{{\rm d}x}$, so in front diagrams (implicitly) the over-strand at a crossing is the strand with lesser slope (as the $y$-axis is oriented away from the viewer). Legendrian links have a {\it contact framing} which is the framing given by the upward unit normal vector to the contact planes.

\begin{figure}[t] \centering
 \includegraphics{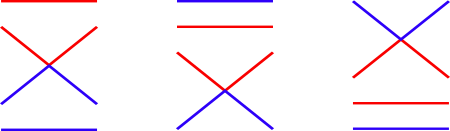}
 \caption{Each closed curve of a normal ruling consists of a pair of companion paths with monotonically increasing $x$-coordinate beginning and ending at a common left and right cusp of~$\pi_{xz}(L)$. At switches, paths from two different closed curves of~$\rho$ meet and both turn a corner at a crossing. The normality condition requires that near switches the switching paths and their companion paths match one of the pictured configurations. At crossings that are not switches paths from two different closed curves cross transversally.} \label{fig:normality}
\end{figure}

\begin{figure}[h!] \centering
 \begin{align*}
 \raisebox{-.5cm}{\includegraphics[scale=.7]{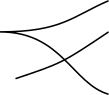}}-\raisebox{-.5cm}{\includegraphics[scale=.7]{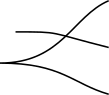}} &=z\left( \;\raisebox{-.5cm}{\includegraphics[scale=.7]{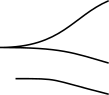}}-\; \raisebox{-.5cm}{\includegraphics[scale=.7]{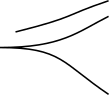}}\right), \tag{R1} \\
 \raisebox{-.5cm}{\includegraphics[scale=.7]{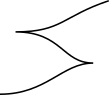}}& =\raisebox{-.5cm}{\includegraphics[scale=.7]{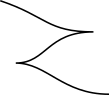}}=0, \tag{R2} \\
 \raisebox{-.5cm}{\includegraphics[scale=.7]{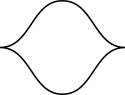}} \;\sqcup K &=z^{-1}K. \tag{R3}
 \end{align*}\vspace{-0.7cm}
 \caption{The ungraded ruling polynomial skein relations.}\label{fig:psuedo-gen}
\end{figure}

Recall that for a Legendrian link $K \subset J^1\R$ a {\it normal ruling} $\rho$ of $K$ is a decomposition of the front diagram of $K$ into a collection of simple closed curves with corners at a left and right cusp and at {\it switches} (adhering to the normality condition, see Fig.~\ref{fig:normality}). For each $x=x_0$ where the front projection of $K$ does not have crossings or cusps, a normal ruling $\rho$ divides the strands of~$K$ at $x=x_0$ into pairs, so that $\rho$ can be viewed as a sequence of pairings of strands of~$K$. For a~more detailed discussion of normal rulings see for instance \cite{Fuchs,NgR2012, R1, Sab}. The {\it ungraded ruling polynomial} of $L$ (also called the $1$-graded ruling polynomial) is defined as
\[
R_K^1(z):=\sum_{\rho\in\Gamma(K)}z^{j(\rho)} \in \Z\big[z^{\pm1}\big],
\] where the sum is over all normal rulings of $K$ and $j(\rho)=\#\text{switches}-\#\text{right cusps}$. For Legendrian links in $J^1\R$, the ungraded ruling polynomial of $K$ satisfies and is uniquely characterized by the skein relations in Fig.~\ref{fig:psuedo-gen} and the normalization $R^1_{\includegraphics[scale=.2]{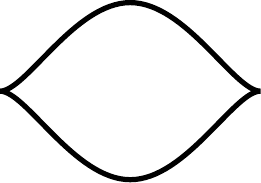}} = z^{-1}$. (See~\cite{R1}.) The relations in Fig.~\ref{fig:psuedo-gen} imply two additional relations that we will make use of, cf.\ \cite[Section~6]{R2}.
\begin{gather} \label{eq:fish}
 \text{fishtail relation:} \ \raisebox{-.35cm}{\includegraphics[scale=.5]{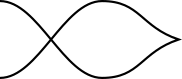}} =\raisebox{-.35cm}{\includegraphics[scale=.5]{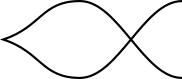}}=0,
 \vspace{.5cm}\\
 \text{double-crossing relation:} \ \raisebox{-.35cm}{\includegraphics[scale=.5]{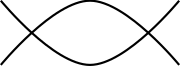}}= \raisebox{-.35cm}{\includegraphics[scale=.5]{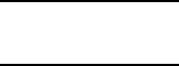}}+ z\cdot\raisebox{-.35cm}{\includegraphics[scale=.5]{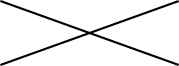}}- z\cdot\raisebox{-.35cm}{\includegraphics[scale=.5]{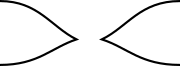}}.\nonumber
\end{gather}
\begin{Remark} The double crossing relation can be used to show that the first relation of Fig.~\ref{fig:psuedo-gen} also hold with right cusps. Moreover, the third relation from Fig.~\ref{fig:psuedo-gen} is implied by the first two as long as $K \neq \varnothing$.
\end{Remark}

\subsection{A Legendrian BMW algebra}

A {\it Legendrian $n$-tangle} is a properly embedded Legendrian $\alpha \subset J^1[0,1]$ (i.e., compact with $\partial \alpha \subset \partial J^1[0,1]$) whose front projection agrees with the collection of horizontal lines $z=i$, $1\leq i \leq n$, near $\partial J^1[0,1]$.  Legendrian isotopies of $n$-tangles are required to remain fixed in a~neighborhood of $\partial J^1[0,1]$. At $x=0$ and $x=1$ we enumerate the endpoints and strands of a Legendrian tangle from $1$ to $n$ with {\it descending} $z$-coordinate. For any permutation $\beta \in S_n$, there is a corresponding {\it positive permutation braid} that is a Legendrian $n$-tangle, also denoted $\beta \subset J^1[0,1]$, that connects endpoint $i$ at $x=0$ to endpoint $\beta(i)$ at $x=1$ for $1 \leq i \leq n$. Up to Legendrian isotopy, $\beta$ is uniquely characterized by requiring that
\begin{itemize}\itemsep=0pt
\item[(i)] the front projection does not have cusps and,
\item[(ii)] for $i<j$, the front projection has no crossings (resp.\ exactly one crossing) between the strands with endpoints $i$ and $j$ at $x=0$ if $\beta(i) < \beta(j)$ (resp.\ $\beta(i)> \beta(j)$).
\end{itemize}
The number of crossings in such a front diagram for $\beta$ is called the {\it length} of $\beta$ and will be deno\-ted~$\lambda(\beta)$. For Legendrian $n$-tangles $\alpha,\beta \subset J^1[0,1]$, we define their multiplication $\alpha\cdot\beta \subset J^1[0,1]$ by stacking $\beta$ to the {\it left} of $\alpha$ (as in composition of permutations). Diagrammatically:
\[
\labellist
\small
\pinlabel $\alpha$ at 45 38
\endlabellist
\includegraphics[scale=.5]{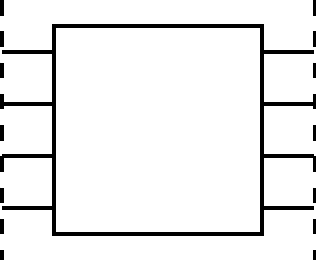} \,\, \raisebox{.6cm}{$\cdot$} \,\,
\labellist
\small
\pinlabel $\beta$ at 45 38
\endlabellist
\includegraphics[scale=.5]{images/Tangle1}
\quad\raisebox{.6cm}{$=$} \quad
\labellist
\small
\pinlabel $\beta$ at 45 38
\pinlabel $\alpha$ at 120 38
\endlabellist
\includegraphics[scale=.5]{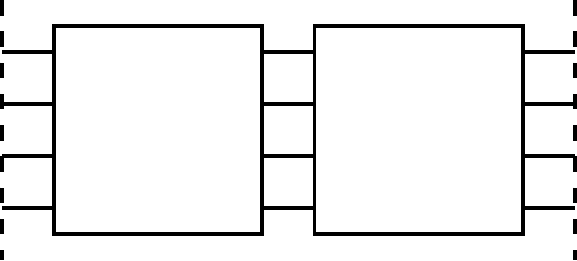}
\]

\begin{Definition} Let $\mathcal{R}$ be a coefficient ring containing $\Z[z^{\pm1}]$ as a subring. Define the {\it Legendrian BMW algebra}, $\operatorname{BMW}_n^\leg$, as an $\mathcal{R}$-module to be the quotient $\mathcal{R} \leg_n/\mathcal{S}$ where $\mathcal{R} \leg_n$ is the free $\mathcal{R}$-module generated by Legendrian isotopy classes of Legendrian $n$-tangles in $J^1[0,1]$, and $\mathcal{S}$ is the $\mathcal{R}$-submodule generated by the ruling polynomial skein relations from Fig.~\ref{fig:psuedo-gen}. Multiplication of $n$-tangles induces an $\mathcal{R}$-bilinear product on $\operatorname{BMW}_n^\leg$.
\end{Definition}

In the remainder of the article, we fix the coefficient ring $\mathcal{R}$ to be $\Z\big[s^{\pm1}\big]$ localized to include denominators of the form $s^n-s^{-n}$ for $n\geq 1$ where $z = s- s^{-1}$. In Section~\ref{sec:5}, we will work with the alternate variable $s= q^{1/2}$.

\begin{figure} \centering
 $\sigma_i:=\raisebox{-.75cm}{\includegraphics[scale=.5]{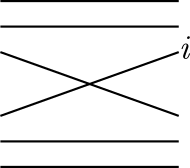}}\hspace{.75cm} \hspace{.75cm} e_i:=\raisebox{-.75cm}{\includegraphics[scale=.5]{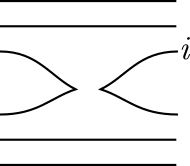}}$
 \caption{Crossing and hook elements in $\operatorname{BMW}_n^\leg$.} \label{fig:sigma}
\end{figure}

Fig.~\ref{fig:sigma} indicates crossing and hook elements, $\sigma_i, e_i \in \operatorname{BMW}_n^\leg$, for $1 \leq i < n$. Note that the fishtail and double-crossing relations imply
\begin{gather}
 \sigma_i e_i = e_i \sigma_i =0, \qquad \mbox{and} \label{eq:fish2} \\
 \sigma_i^2 = 1 + z \sigma_i - z e_i, \qquad \mbox{for $1 \leq i <n$.} \label{eq:double}
\end{gather}

\begin{Remark} \label{rem:BMW}\samepage\quad
\begin{enumerate}\itemsep=0pt
\item Occasionally, we will multiply an element $\alpha \in \operatorname{BMW}_i^\leg$ by an element of $\operatorname{BMW}_j^\leg$ with $i<j$. Unless indicated otherwise, we do so by extending $i$-tangles to $j$-tangles by placing $j-i$ horizontal strands below $\alpha$.

\item We do not give a complete set of generators and relations for $\operatorname{BMW}_n^\leg$, as the main role of $\operatorname{BMW}_n^\leg$ in this article is to provide a convenient setting for ruling polynomial calculations. We leave finding a presentation for $\operatorname{BMW}_n^\leg$ as an open problem.
\end{enumerate}
\end{Remark}

\subsection{Legendrian satellites and reduced ruling polynomials} \label{sec:sat} We will be considering rulings of satellites. Given a connected Legendrian knot $K \subset J^1\R$ and a Legendrian $n$-tangle $L \subset J^1[0,1]$, a~front diagrammatic description of the {\it Legendrian satellite} $S(K,L) \subset J^1\R$ is the following: Form the $n$-copy of~$K$, i.e., take $n$ copies of $K$ each shifted up a~small amount in the $z$-direction, then insert a rescaled version of the $n$-tangle $L$ into the $n$-copy at a small rectangular neighborhood $J \cong [0,1] \times [-\epsilon, \epsilon]$ of part of a strand of $K$ that is oriented from left-to-right. We refer to $J$ as the {\it $J^1[0,1]$-part} of the satellite. See Fig.~\ref{fig:Sat}. It can be shown that the Legendrian isotopy type of~$S(K,L)$ depends only on~$K$ and the closure of $L$ in $J^1S^1$, cf.~\cite{NgTraynor}. (See also Section~\ref{sec:satellite} for additional discussion.)

\begin{figure} \centering
		\labellist
\small
\pinlabel $K$ [t] at 36 30
\pinlabel $\beta$ [t] at 170 44
\pinlabel $S(K,\beta)$ [t] at 352 0
\endlabellist

 \includegraphics{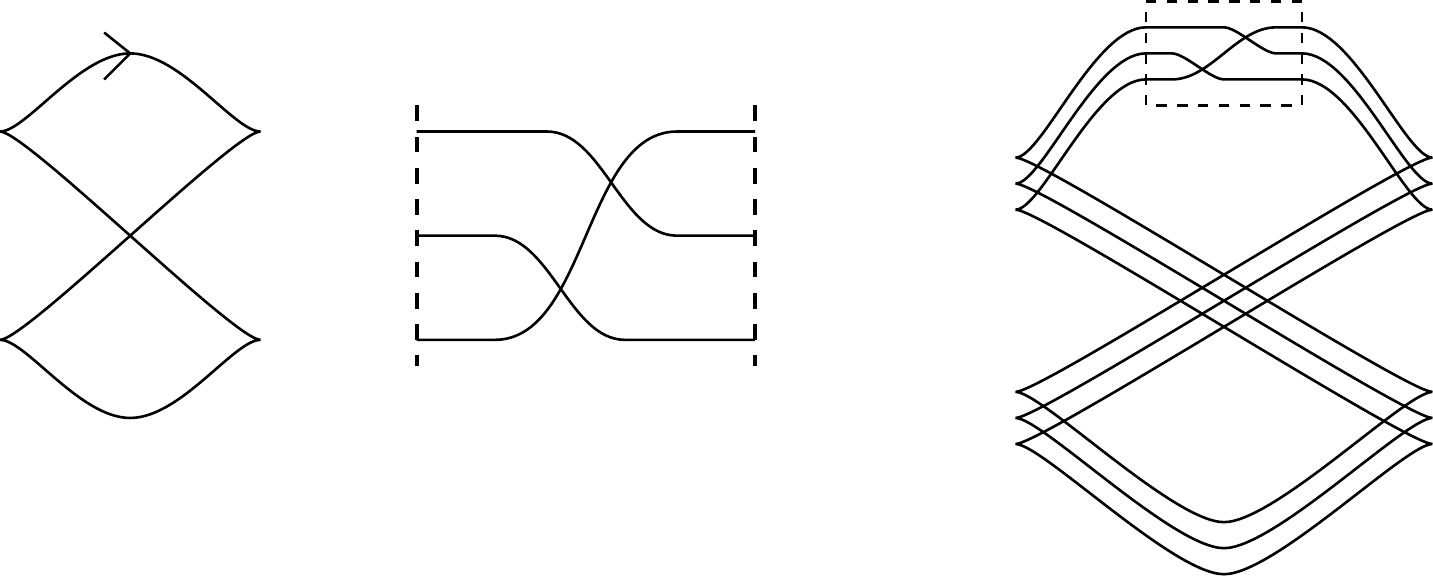}
		
\caption{The Legendrian satellite $S(K,\beta)$ where $\beta = \sigma_1 \sigma_2 \subset J^1[0,1]$ is the positive permutation braid associated to the $3$-cycle $(1 \, 2 \,3)$. The $J^1[0,1]$-part of the satellite is indicated by the dotted rectangular box.} \label{fig:Sat}
\end{figure}

In \cite{NgR2012} a variant of the ruling polynomial was introduced for satellites using reduced normal rulings. Let $L \subset J^1[0,1]$ be a Legendrian $n$-tangle. A normal ruling $\rho$ of $S(K,L)$ is said to be {\it reduced} if, outside of the $J^1[0,1]$-part of $S(K,L)$, parallel strands of $S(K,L)$ corresponding to a~single strand of~$K$ are not paired by $\rho$. See Fig.~\ref{fig:reduced}.
\begin{figure}[h] \centering
 \includegraphics{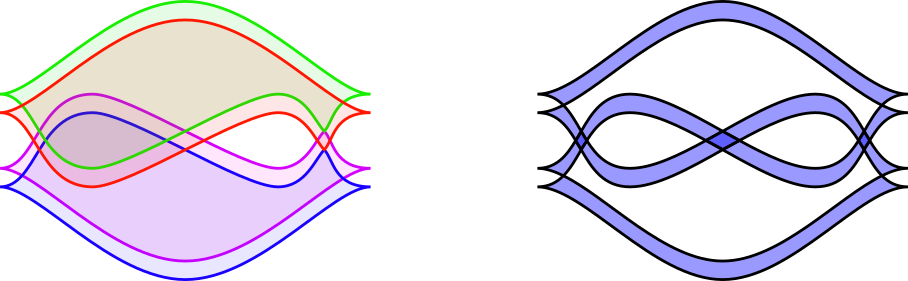}
 \caption{A reduced normal ruling (left) and a non-reduced normal ruling (right) of $S(K,\beta)$ where $K$ is a right-handed trefoil and $\beta=1$.} \label{fig:reduced}
\end{figure}
 We denote the set of reduced normal rulings of $S(K,L)$ by $\wt{\Gamma}(K,L)$ and define the {\it reduced ruling polynomial} of $S(K,L)$ as $\wt{R}_{S(K,L)}(z):=\sum\limits_{\rho\in\wt{\Gamma}(K,L)}z^{j(\rho)}$.

\begin{Remark} \label{rem:thin}If the reduced condition holds for a normal ruling $\rho$ of $S(K,L)$ for the parallel strands of $S(K,L)$ corresponding to a single point $k_0 \in K$ outside of the $J^1[0,1]$-part of $S(K,L)$, then $\rho$ will be reduced. Indeed, the involution of parallel strands of $S(K,L)$ at $k_0 \in K$ that arises from restriction the pairing of $\rho$ (strands of $S(K,L)$ at $k_0$ that are paired with strands in a part of the satellite away from~$k_0$ are fixed points of the involution) is called the ``thin part'' of~$\rho$ at~$k_0$ in~\cite{NgR2012}, and Lemmas~3.3 and~3.4 from~\cite{NgR2012} show that the thin part is independent of~$k_0$.
\end{Remark}

We are now prepared to make the following key definition.

\begin{Definition} \label{def:nruling}For any (connected) Legendrian knot $K\subset J^1\R$ we define the {\it ungraded $n$-colored ruling polynomial} as
\begin{gather*}
 R_{n,K}^1(s):=\frac{1}{c_n}\sum_{\beta\in S_n}s^{\lambda(\beta)}\wt{R}_{S(K,\beta)}(z),
\end{gather*}
where the sum is over positive permutation braids, $z=s-s^{-1}$, $\lambda(\beta)$ is the length of $\beta$, and $c_n=s^{n(n-1)/2}\prod\limits_{i=1}^n\frac{s^i-s^{-i}}{s-s^{-1}}$.
 \end{Definition}
Note that $R_{n,K}^1$ belongs to coefficient ring $\mathcal{R}$ defined above.

\begin{Remark}It is proved in \cite{NgR2012} that with $L$ fixed the reduced ruling polynomial $\widetilde{R}_{S(K,L)}(z)$ is a Legendrian isotopy invariant of~$K$. Alternatively, the Legendrian isotopy invariance of $R_{n,K}^1$ also follows from Theorem~\ref{thm:main1} below and invariance of the ordinary ruling polynomials.
\end{Remark}

\subsection[Inductive characterization of $R_{n,K}^1$]{Inductive characterization of $\boldsymbol{R_{n,K}^1}$} \label{sec:Ln}

It is convenient to extend the concept of satellite ruling polynomials slightly by defining $R^1_{S(K,\eta)}$ for $\eta \in \operatorname{BMW}_n^{\leg}$ represented as an $\mathcal{R}$-linear combination of $n$-tangles $\sum\limits_{i=1}^k r_i \alpha_i$ to be
\begin{gather} \label{eq:BMWsatellite}
R^1_{S(K,\eta)} = \sum_{i=1}^kr_i R^1_{S(K,\alpha_i)}.
\end{gather}
(This is well-defined since the front diagram of each $\alpha_i$ appears as a~subset of $\pi_{xz}(S(K,\alpha_i))$ and~$R^1$ satisfies the skein relations that define $\operatorname{BMW}_n^\leg$.) The goal in the remainder of this section is to give such a characterization of the ungraded $n$-colored ruling polynomials via appropriate elements $L_n \in \operatorname{BMW}_n^\leg$.

Let
\[
\gamma_n=1+\sum_{j=1}^{n-1}s^j\sigma_{n-1}\sigma_{n-2}\cdots \sigma_{n-j} \in \operatorname{BMW}^\leg_n,
\]
 where $\sigma_i$ is as in Fig.~\ref{fig:sigma} and consider the elements of $\operatorname{BMW}^\leg_n$
 defined inductively for $n\geq1$ by $L_1 =1$ and
 \[L_n=L_{n-1}\beta_n, \qquad n \geq 2,\]
where
\[ \beta_n:=\left(1-z\sum_{k=2}^n\alpha_{k,n}\right)\gamma_n.\]
Here, $\alpha_{k,n}\in \operatorname{BMW}^\leg_n$ is defined inductively on the {\it bottom} $k$ strands by $\alpha_{2,n} = e_{n-1}$ and
 \begin{gather} \alpha_{k,n}=\raisebox{-.25cm}{\includegraphics{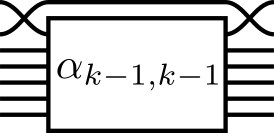}}-z\cdot\raisebox{-.25cm}{\includegraphics{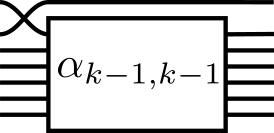}}-z\cdot\raisebox{-.25cm}{\includegraphics{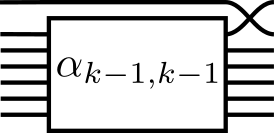}}+z^2\cdot\raisebox{-.25cm}{\includegraphics{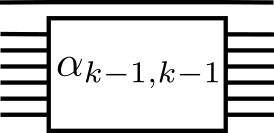}} \label{alpha}\end{gather}
		with the identity tangle appearing as the top $n-k$ strands of $\alpha_{k,n}$. For example, using (\ref{eq:fish2}) we have $L_2=\mathbf{1}+s\sigma_1-ze_1$.
		The double subscript notation on $\alpha_{k,n}$ is to emphasize that $\alpha_{k,n}$
		involves the bottom $k$ strands, $n-k+1, n-k+2, \ldots, n$, out of $n$ total strands.
		(By contrast, when multiplying $L_{n-1} \beta_n$ an extra strand is placed at the bottom of $L_{n-1}$; see Remark~\ref{rem:BMW}.)

In Section~\ref{sec:Kauffman}, we will also make use of a non-inductive characterization of the~$\alpha_{k,n}$. Given a~front diagram~$D$, let $\operatorname{cr}(D)$ denote the set of crossings appearing in $D$. We define the {\it resolution of $D$ with respect to $X\subset \operatorname{cr}(D)$} to be the tangle $r_X(D)$ obtained from resolving all crossings of~$X$ as depicted in Fig.~\ref{fig:res}.

 \begin{figure}[h] \centering
 \includegraphics[scale=.4]{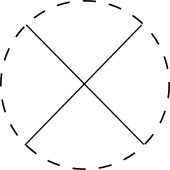}\raisebox{.9cm}{$\;\longrightarrow$} \includegraphics[scale=.4]{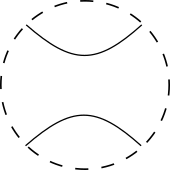}
 \caption{The resolution of a crossing in $X\subset \operatorname{cr}(D)$.} \label{fig:res}
 \end{figure}
 \begin{Lemma} For $1<k\leq n$, \[\raisebox{.6cm}{$\alpha_{k,n}=\sum\limits_{X\subset \operatorname{cr}(C_{k,n})} (-z)^{\vert X \vert} r_X(C_{k,n})$, \quad where $C_{k,n}=$ }\includegraphics[scale=.65]{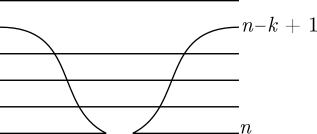}\]\label{lemma:alpha}
 \end{Lemma}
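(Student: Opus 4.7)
The plan is to prove the identity by induction on $k$, exploiting the recursive structure already built into the definition of $\alpha_{k,n}$. The key observation is that the four pictures in equation~(\ref{alpha}) have a transparent combinatorial meaning: they correspond to the four subsets of a two-element set $\{c_1,c_2\}$ of crossings, with the coefficient $(-z)^{|S|}$ attached to the diagram where precisely the crossings in $S$ have been resolved as in Fig.~\ref{fig:res}. The first term represents the ``unresolved'' tangle (a copy of $C_{k-1,n}$ with two extra crossings adjoined), the two middle terms each resolve one of these two crossings, and the final term resolves both.

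For the base case $k=2$, I would identify $C_{2,n}$ as the diagram with no crossings that is just the hook $e_{n-1}$ sitting on the bottom two strands; then the right-hand side of the asserted identity collapses to the single $X=\emptyset$ term, giving $r_\emptyset(C_{2,n})=e_{n-1}=\alpha_{2,n}$, which matches the definition.

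For the inductive step, I would describe $C_{k,n}$ explicitly as the tangle obtained from $C_{k-1,n}$ by adjoining the two extra crossings visible in the first picture of~(\ref{alpha}), so that $\operatorname{cr}(C_{k,n})=\operatorname{cr}(C_{k-1,n})\sqcup\{c_1,c_2\}$. Each subset $X\subset\operatorname{cr}(C_{k,n})$ then decomposes uniquely as $X=Y\sqcup S$ with $Y\subset\operatorname{cr}(C_{k-1,n})$ and $S\subset\{c_1,c_2\}$; since the new crossings lie on strands that are diagrammatically disjoint from $C_{k-1,n}$, the two resolution operations commute, so
\begin{align*}
\sum_{X\subset\operatorname{cr}(C_{k,n})}(-z)^{|X|}r_X(C_{k,n})
=\sum_{S\subset\{c_1,c_2\}}(-z)^{|S|}\,T_S\!\Bigl(\sum_{Y\subset\operatorname{cr}(C_{k-1,n})}(-z)^{|Y|}r_Y(C_{k-1,n})\Bigr),
\end{align*}
where $T_S$ denotes the operation of inserting $C_{k-1,n}$ together with the appropriate (resolved or unresolved) $c_i$'s. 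By the inductive hypothesis the inner sum equals $\alpha_{k-1,n}$, and then the outer sum over the four subsets $S\subset\{c_1,c_2\}$ reproduces precisely the four terms of~(\ref{alpha}).

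The main obstacle is pinning down the diagrammatic identification of $C_{k,n}$ from the picture and verifying that the two crossings added in passing from $C_{k-1,n}$ to $C_{k,n}$ really are the two whose resolutions produce the signed $-z$ and $z^2$ terms of (\ref{alpha}) under the convention of Fig.~\ref{fig:res}; once this matching is in place, the ``disjoint crossings commute'' observation reduces the induction to a routine $(1-z\,r_{c_1})(1-z\,r_{c_2})$ expansion applied to $\alpha_{k-1,n}$.
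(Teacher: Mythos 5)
Your proposal is correct and is precisely the ``straightforward induction on $k$'' that the paper's one-line proof intends: the base case $C_{2,n}=e_{n-1}$ (no crossings) and the inductive step in which the four terms of~(\ref{alpha}) correspond to the four subsets of the two crossings adjoined in passing from $C_{k-1,n}$ to $C_{k,n}$, with resolutions of disjoint crossings commuting. Your reading of the pictures matches how~(\ref{alpha}) is used in the proof of Lemma~\ref{lemma:THMB} (namely $\alpha_{k+1}=\sigma_{m-k}\alpha_k\sigma_{m-k}-z\sigma_{m-k}\alpha_k-z\alpha_k\sigma_{m-k}+z^2\alpha_k$), so no gap remains.
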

		
\begin{proof}This is a straightforward induction on~$k$.
\end{proof}

\begin{Theorem}\label{thm:main1}For any Legendrian $K\subset J^1\R$, $R_{n,K}^1=\frac{1}{c_n}R^1_{S(K,L_n)}$.
\end{Theorem}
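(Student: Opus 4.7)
Proceed by induction on $n$. The base case $n = 1$ is immediate: $L_1 = 1$, $c_1 = 1$, and $S(K, \mathrm{id}) = K$ has no parallel strands coming from a single strand of $K$, so reduced and ordinary rulings coincide. For the inductive step, assume
\[R^1_{S(K, L_{n-1})} = \sum_{\beta' \in S_{n-1}} s^{\lambda(\beta')} \wt{R}_{S(K, \beta')};\]
after cancelling the factor $1/c_n$ from both sides of the target identity, the goal reduces to showing
\[R^1_{S(K, L_n)} = \sum_{\sigma \in S_n} s^{\lambda(\sigma)} \wt{R}_{S(K, \sigma)},\]
where $L_n = L_{n-1}\cdot\beta_n$ with $\beta_n = (1 - z \sum_{k=2}^n \alpha_{k,n}) \gamma_n$.

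The plan is to interpret the two factors of $\beta_n$ as performing distinct operations on rulings. The $\gamma_n$-factor encodes the right coset decomposition $S_n = \bigsqcup_{j=0}^{n-1} S_{n-1}\cdot\delta_j$ with $\delta_j := \sigma_{n-1}\sigma_{n-2}\cdots\sigma_{n-j}$ and $\delta_0 = \mathrm{id}$: each $\sigma \in S_n$ factors uniquely as $\beta'\cdot\delta_j$ with $\lambda(\sigma) = \lambda(\beta') + j$, which yields
\[ \sum_{\sigma \in S_n} s^{\lambda(\sigma)} \sigma = \Bigl(\sum_{\beta' \in S_{n-1}} s^{\lambda(\beta')} \beta'\Bigr) \gamma_n \quad \text{in } \operatorname{BMW}_n^\leg.\]
The factor $(1 - z\sum_k \alpha_{k,n})$ should be shown to restrict the ruling polynomial to the sub-count of normal rulings whose thin part --- at a chosen point outside the $J^1[0,1]$-part, which by Remark~\ref{rem:thin} is independent of choice of point --- does not pair the newly added $n$-th strand with any other strand. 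Expanding $L_n = L_{n-1}\gamma_n - z\sum_k L_{n-1}\alpha_{k,n}\gamma_n$, applying $\mathcal{R}$-linearity of the satellite map $R^1_{S(K,\cdot)}$, and combining these two ingredients should reduce the identity to a collection of equalities indexed by $j$; in each, the $\alpha_{k,n}$ correction terms cancel exactly the contributions from rulings where strand $n$ appears in a $2$-cycle of the thin part, leaving the fully reduced count once the inductive hypothesis is used to handle pairings among the top $n - 1$ strands.

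The main obstacle is the key lemma identifying $R^1_{S(K, \alpha_{k,n}\cdot L)}$ as the subsum of $R^1_{S(K, L)}$ over normal rulings whose thin part pairs strand $n$ with strand $n - k + 1$. The hook $e_i$, when inserted into a satellite, forces strands $i$ and $i + 1$ to be paired in any normal ruling; Lemma~\ref{lemma:alpha} expresses $\alpha_{k,n}$ as the signed resolution sum $\sum_X (-z)^{|X|} r_X(C_{k,n})$. One should argue via the fishtail relations $\sigma_i e_i = 0$ from (\ref{eq:fish2}) and the double-crossing relation (\ref{eq:double}) that the resolutions $r_X(C_{k,n})$ realize each way the $n$-th strand can be paired with the $(n-k+1)$-st strand exactly once, and that the signs $(-z)^{|X|}$ combine with the hook contributions so that satelliting with $\alpha_{k,n}$ has the net effect of projecting onto rulings with precisely this thin-part pairing. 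Reconciling this analysis with the subsequent multiplication by $\delta_j$ --- which permutes the bottom strand through intermediate positions --- and with the presence of $L_{n-1}$ on its left will require careful diagrammatic bookkeeping but should follow from the same skein relations.
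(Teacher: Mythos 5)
Your outline reproduces the paper's architecture in broad strokes (the coset factorization $S_n=\bigsqcup_{j}S_{n-1}\cdot\sigma_{n-1}\cdots\sigma_{n-j}$ implemented by $\gamma_n$, and the terms $\alpha_{k,n}$ accounting for rulings that pair the newly added strand), but as written the induction cannot be closed. Your inductive hypothesis is a statement about $S(K,L_{n-1})$ alone, while what you must evaluate is $R^1_{S(K,L_{n-1}\beta_n)}$, the satellite with the \emph{product} tangle: both factors sit inside the same $J^1[0,1]$-part, rulings of the satellite do not factor, and knowing $\wt{R}_{S(K,\beta')}$ for $\beta'\in S_{n-1}$ says nothing about $\wt{R}$ or $R^1$ of $S(K,\beta'\beta_n)$. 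This is exactly why the paper proves a strengthened inductive statement, carrying along an arbitrary auxiliary linear combination of $n$-tangles $\mu$, namely $R_{S(K,L_m\mu)}=\sum_{\beta\in S_m}s^{\lambda(\beta)}\wt{R}^{(m)}_{(K,\beta\mu)}$, and why it replaces the fully reduced condition by the boundary-localized $m$-reduced polynomials $\wt{R}^{(m)}$ and the paired polynomials $R^{\tau(k,m)}_{(*;K,\cdot)}$, which make sense for the intermediate non-braid tangles; only at the end ($m=n$, $\mu=1$, $\beta$ a positive braid) does Remark~\ref{rem:reduced} convert $n$-reduced back into reduced. Your thin-part formulation has no analogue of this relative bookkeeping, and the sentence ``the $\alpha_{k,n}$ correction terms cancel exactly the contributions from rulings where strand $n$ appears in a $2$-cycle of the thin part'' is precisely the assertion that has to be proved, not a consequence of the setup.

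That is the second, and more serious, gap: the key lemma you flag as the main obstacle is only asserted, and the route you propose (fishtail and double-crossing relations applied to the resolution expansion of Lemma~\ref{lemma:alpha}) cannot deliver it by itself, because the target quantity --- the count of rulings pairing strands $m-k+1$ and $m$ at a marked point, subject to an $(m-1)$-reduced condition at the tangle boundary --- is not a skein-theoretic functional of the tangle, so relations in $\operatorname{BMW}_n^{\leg}$ do not act on it. The paper's Lemma~\ref{lemma:THMB} proves $z\wt{R}^{(m-1)}_{(K,\beta\alpha_{k,m}\nu)}=R^{\tau(m-k+1,m)}_{(*;K,\beta\nu)}$ by induction on $k$, via inclusion--exclusion over switch sets at the two relevant crossings and explicit bijections between rulings of the original and resolved diagrams; the crux is surjectivity of those bijections, i.e., that re-inserting switches never violates normality, and this is exactly where the hypotheses that $\beta\in S_{m-1}$ and that the ruling is $(m-1)$-reduced are used (a normality violation would force two of the top $m-1$ strands to be paired at the right boundary of the $J^1[0,1]$-part). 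Your sketch invokes neither hypothesis nor this check, so the claimed ``projection'' property of $\alpha_{k,n}$, and with it the inductive step, remains unestablished.
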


The proof will be given below after Lemma~\ref{lemma:THMB}.

Given $K\subset J^1\R$, and $\eta$ a Legendrian $n$-tangle, recall that the $J^1[0,1]$-part of $S(K,\eta)$ refers to a rectangular region, $J\cong [0,1] \times [-\epsilon,\epsilon]$, where the front diagram of $\eta$ appears. We will call a normal ruling~$\rho$ of~$S(K,\eta)$ {\it $k$-reduced} if {\it at the right boundary of~$J$} none of the top~$k$ parallel strands of $\eta$ within $S(K,\eta)$ are paired with one another. We denote by $\wt{\Gamma^k}(K,\eta)$, the set of $k$-reduced normal rulings of~$S(K,\eta)$. Similarly, given a~location $*$ within $J$ corresponding to an $x$-value in $J^1[0,1]$ without double points of $\eta$ a~normal ruling $\rho$ of $S(K,\eta)$ is said to be {\it $(k,m)$-paired at $*$} if $\rho$ is $(m-1)$-reduced (at the right boundary of~$J$) and $\rho$ pairs strand $k$ of~$\eta$ with strand~$m$ of $\eta$ at $*$ (where strands are numbered from top to bottom as they appear in~$\eta$ at~$*$). The set of $(k,m)$-paired rulings of $S(K,\eta)$ is denoted by $\Gamma_{*}^{\tau(k,m)}(K,\eta)$. The {\it $k$-reduced ruling polynomial} and {\it $(k,m)$-paired ruling polynomial} (at $*$) are defined respectively by \[ \wt{R}^{(k)}_{(K,\eta)}(z)=\sum_{\rho\in\wt{\Gamma^k}(K,\eta)}z^{j(\rho)} \qquad \text{and} \qquad R^{\tau(k,m)}_{(*;K,\eta)}(z)=\sum_{\rho\in\Gamma_{*}^{\tau(k,m)}(K,\eta)}z^{j(\rho)}.\]
Either polynomial can be extended by linearity to allow $\eta$ to be an $\mathcal{R}$-linear combination of Legendrian $n$-tangles.

\begin{Remark}\label{rem:reduced}If $\eta$ is a $n$-stranded positive braid, then any $n$-reduced normal ruling is reduced. This is because if parallel strands of the satellite $S(K,\eta)$ corresponding to a single strand of $K$ are not paired at the right side of the $J^1[0,1]$-part of the satellite, then such parallel strands cannot be paired anywhere outside of the $J^1[0,1]$-part. See Remark~\ref{rem:thin}.
\end{Remark}

\begin{Lemma}\label{lemma:THMB} For any $1<k \leq m \leq n$, let $\beta \in S_{m-1}$ be a positive permutation braid extended to an Legendrian $n$-tangle by placing the $n-m+1$ stranded identity tangle below $\beta$, and let $\nu$ be a linear combination of Legendrian $n$-tangles. Then, we have
\[z\wt{R}^{(m-1)}_{(K,\beta\alpha_{k,m}\nu)}(z)=R^{\tau(m-k+1,m)}_{(*;K,\beta\nu)}(z),\]
where $*$ is the location between $\beta$ and $\nu$ $($immediately to the left diagrammatically of~$\beta)$.
\end{Lemma}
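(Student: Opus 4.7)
My plan is to argue by induction on $k$, with the base case $k = 2$ handled directly and the inductive step built on equation~(\ref{alpha}).

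For the base case $k = 2$, we have $\alpha_{2, m} = e_{m-1}$, and inserting this element between $\beta$ and $\nu$ adds a right cusp near its left boundary and a left cusp near its right boundary, both between strands $m-1$ and~$m$. Any normal ruling of $S(K, \beta e_{m-1} \nu)$ must pair the two strands meeting at each cusp, so strands $m-1$ and~$m$ are forced to be paired immediately to the left of the $e_{m-1}$ region, which is precisely the location~$*$ in $S(K, \beta\nu)$. Because $\beta \in S_{m-1}$ fixes strand~$m$ and only permutes strands $1, \ldots, m-1$, the forced pairing at the right boundary of~$J$ necessarily matches strand~$m$ with some strand in $\{1, \ldots, m-1\}$, so the $(m-1)$-reduced condition at the right boundary of $J$ is automatic and does not restrict which rulings arise. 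Removing the $e_{m-1}$ region then gives a bijection between $(m-1)$-reduced rulings of $S(K, \beta e_{m-1} \nu)$ and $(m-1, m)$-paired rulings of $S(K, \beta\nu)$ at~$*$; this bijection decreases $j(\rho)$ by one (one right cusp is lost), producing the factor of $z$ on the left-hand side. Since $m - k + 1 = m - 1$ when $k = 2$, the base case follows.

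For the inductive step ($k > 2$), I would expand $\alpha_{k, m}$ via equation~(\ref{alpha}) as a linear combination of four tangles, each built from $\alpha_{k - 1, m}$ (on the bottom $k - 1$ strands) together with possible extra crossings or resolutions involving the new strand $m - k + 1$. Applying the inductive hypothesis to each summand -- absorbing any adjacent crossings into a suitably modified permutation braid $\beta'$ or tangle $\nu'$ -- and then simplifying with the BMW skein relations~(\ref{eq:fish2}) and~(\ref{eq:double}) together with the ruling polynomial skein relations of Fig.~\ref{fig:psuedo-gen} should consolidate the four contributions into the single expression $R^{\tau(m - k + 1, m)}_{(*; K, \beta\nu)}$. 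An alternative path is to expand $\alpha_{k, m}$ via Lemma~\ref{lemma:alpha} as the state sum $\sum_{X \subset \operatorname{cr}(C_{k, m})} (-z)^{|X|} r_X(C_{k, m})$ and compute $\wt{R}^{(m-1)}_{(K, \beta \, r_X(C_{k, m}) \, \nu)}$ resolution by resolution, then sum with signs.

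The hardest part will be the inductive step: one must carefully track how the $(m - k + 1, m)$-pairing at~$*$ emerges from the interaction of the four terms of~(\ref{alpha}) with the $(m - k + 2, m)$-pairing supplied by the inductive hypothesis, and how the sign pattern $(1, -z, -z, z^2)$ aligns with cases in which local ruling features (switches, forced pairings from new crossings, or extra cusps) appear near strand $m - k + 1$. Throughout, the $(m-1)$-reduced condition at the right boundary of~$J$ must be preserved, so one also needs to check that neither the new crossings nor the modifications to $\beta$ and $\nu$ introduce forbidden pairings among the top $m-1$ strands.
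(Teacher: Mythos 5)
Your overall framework coincides with the paper's: induct on $k$, handle $k=2$ via the hook $\alpha_{2,m}=e_{m-1}$, and for the inductive step expand $\alpha_{k,m}$ using~(\ref{alpha}), absorb the two crossings $\sigma_{m-k}$ into the braid and into $\nu$, and apply the inductive hypothesis to each of the four summands. Your base case is essentially right, with two corrections: removing the right cusp of the hook \emph{increases} $j(\rho)$ by one (right cusps enter $j$ with a minus sign), which is exactly what the factor of $z$ on the left-hand side compensates for; and the $(m-1)$-reduced condition is \emph{not} automatic for rulings of $S(K,\beta e_{m-1}\nu)$ --- the hook only forces strands $m-1$ and $m$ to be paired near the hook, and other pairs among the top $m-1$ strands may perfectly well be paired at the right boundary of $J$. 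What saves the base case is simply that the insert/remove-hook bijection only changes the diagram at $*$ and hence preserves the $(m-1)$-reduced condition, which appears on both sides of the identity.

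The genuine gap is the inductive step, which you defer with ``should consolidate.'' That consolidation is the entire content of the paper's proof (the equality marked $\star$ there), and the mechanism you propose --- simplifying with the relations~(\ref{eq:fish2}),~(\ref{eq:double}) and the skein relations of Fig.~\ref{fig:psuedo-gen} --- does not apply: the quantities $\wt{R}^{(m-1)}$ and $R^{\tau(m-k+1,m)}$ are \emph{restricted} ruling counts (reduced at the right boundary of $J$, paired at $*$), not ruling polynomials of a link, so they do not descend to $\operatorname{BMW}_n^{\leg}$ and one may not invoke its relations; moreover the two crossings produced by~(\ref{alpha}) sit on opposite sides of the marked location $*$, so the configuration is not even a local $\sigma_i^2$. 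What is actually needed is a combinatorial argument: sort the $(m-k+1,m)$-paired rulings of the diagram $\beta\sigma_{m-k}*\sigma_{m-k}\nu$ by which of the two crossings adjacent to $*$ are switches, construct bijections to the paired rulings of the three partially resolved diagrams by resolving the switched crossings (each lost switch accounting for a power of $z$), and combine by inclusion--exclusion, the ``neither is a switch'' case being exactly a $(m-k,m)$-paired ruling of $\beta\nu$. The delicate point your sketch never addresses is the surjectivity of these bijections: when a resolved crossing is reinstated as a switch one must verify normality, and this is precisely where the hypotheses enter --- a normality violation would force strand $m-k$ to be paired at $*$ with a strand numbered between $m-k+2$ and $m-1$, and since $\beta$ involves only the top $m-1$ strands this would pair two of the top $m-1$ strands at the right boundary of $J$, contradicting $(m-1)$-reducedness. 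Without this argument the induction does not close, so as written your proposal establishes only the base case.
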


\begin{proof}The proof is by induction on $k$ with $m$ and $n$ fixed. The base case is clear since $\alpha_{2,m}=e_{m-1}$.

Suppose the statement is true for~$k$. For clarity of this proof, we emphasize the location of our paired rulings within our notation, and abbreviate $\alpha_{k,m}$ as~$\alpha_k$. Using~(\ref{alpha}) and the inductive hypothesis, we compute
\begin{gather*}
 z\wt{R}^{(m-1)}_{(K,\beta\alpha_{k+1}\nu)}(z)\\
 =z\wt{R}^{(m-1)}_{(K,\beta\sigma_{m-k}\alpha_{k}\sigma_{m-k}\nu)}(z) -z^2\wt{R}^{(m-1)}_{(K,\beta\sigma_{m-k}\alpha_{k}\nu)}(z) -z^2\wt{R}^{(m-1)}_{(K,\beta\alpha_{k}\sigma_{m-k}\nu)}(z)+z^3\wt{R}^{(m-1)}_{(K,\beta\alpha_{k}\nu)}(z)\\
 =R^{\tau(m-k+1,m)}_{(*;K,\beta\sigma_{m-k}*\sigma_{m-k}\nu)}(z) -zR^{\tau(m-k+1,m)}_{(*;K,\beta\sigma_{m-k}*\nu)}(z) -zR^{\tau(m-k+1,m)}_{(*;K,\beta*\sigma_{m-k}\nu)}(z)+z^2R^{\tau(m-k+1,m)}_{(*;K,\beta*\nu)}(z) \hypertarget{star}{}\\
 \overset{\star}{=}R^{\tau(m-k,m)}_{(*;K,\beta*\nu)}(z).
\end{gather*}
We now establish equality $\star$. For the diagrams $D$ that follow, when considering $(k,m)$-paired rulings, we indicate the location $*$ with a green vertical segment having endpoints on the paired strands~$k$ and~$m$. When we encode such information in $D$ we suppress the paired notation $R^{\tau(k,m)}_{(*;K,D)}$ as $R^\tau_{K,D}$ (and similarly for sets of rulings: $\Gamma_{*}^{\tau(k,m)}(K,D)$ becomes $\Gamma^{\tau}(K,D)$). For instance,
\[
R^{\tau(m-k,m)}_{(*;K,\beta*\nu)}(z) =
 R^\tau_{\left(K,\; \raisebox{-.25cm}{\includegraphics[scale=.6]{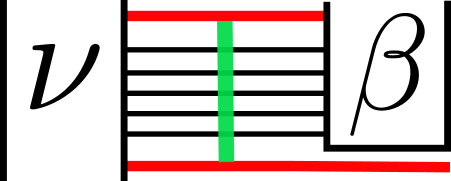}}\right)}(z),
\]
where the diagram only pictures strands with numberings in the range $m-k$ to $m$ at $*$. Observe the bijection
\begin{gather}\label{eq:color}
 \Gamma^\tau\left(K, \raisebox{-.25cm}{\includegraphics[scale=.6]{images/paired1.png}}\right)=\left\{\rho \in \Gamma^\tau\left(K, \raisebox{-.25cm}{\includegraphics[scale=.6]{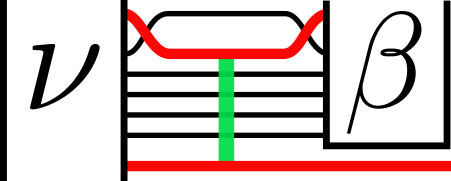}}\right) \,\bigg|\, \mbox{$r_1$ and $r_2$ are not switches of $\rho$}\right\},
\end{gather}
 where $r_1$ and $r_2$ denote the left and right crossings of the diagram on the right. In addition, we have the following (separate) bijections:
\begin{gather}\label{eq:bijectionsec2}
\begin{array}{@{}lrcl}
 &\rho&\mapsto &\Phi(\rho),\\
 \Big\{ \rho\in\Gamma^\tau\left(K,\raisebox{-.25cm}{\includegraphics[scale=.65]{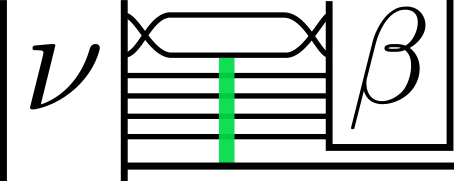}}\right) \,\big|\,& \mbox{ $r_1$ is a switch} \Big\} &{\longleftrightarrow}
 & \Gamma^\tau\left(K,\raisebox{-.25cm}{\includegraphics[scale=.65]{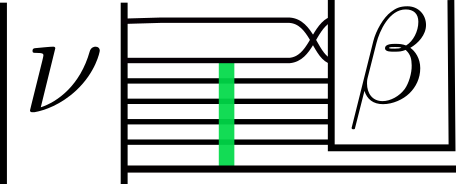}}\right), \vspace{1mm}\\
 \Big\{\rho\in\Gamma^\tau\left(K,\raisebox{-.25cm}{\includegraphics[scale=.65]{images/paired3.png}}\right) \, \big|\, & \mbox{$r_2$ is a switch}\Big\} &
 \longleftrightarrow &
 \Gamma^\tau\left(K,\raisebox{-.25cm}{\includegraphics[scale=.65]{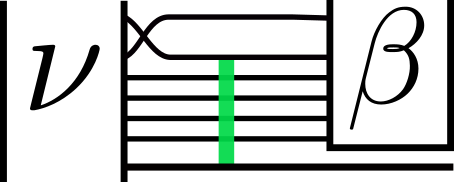}}\right), \vspace{1mm}\\
 \Big\{\rho\in\Gamma^\tau\left(K,\raisebox{-.25cm}{\includegraphics[scale=.65]{images/paired3.png}}\right) \,\big|\, & \mbox{$r_1$ and $r_2$ are switches }\Big\} & \longleftrightarrow & \Gamma^\tau\left(K,\raisebox{-.25cm}{\includegraphics[scale=.65]{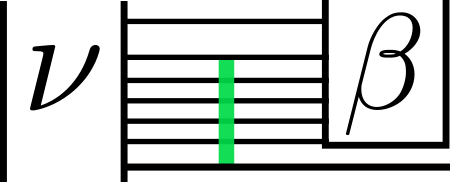}}\right) ,
 \end{array}
\end{gather}
where $\Phi(\rho)$ is the unique ruling that agrees with $\rho$ outside of a neighborhood of the resolved crossing(s). (Any ruling of a diagram $D$ with switches at a set of crossings~$X$ gives rise to a~ruling of $r_{X}(D)$.) Each $\Phi$ is clearly injective. To verify surjectivity, observe that any ruling $\rho'$ in one of the sets on the right side of~(\ref{eq:bijectionsec2}) indeed comes from a ruling on the left side by replacing the resolved crossing(s) with switches. Here, it is crucial to note that the normality condition is automatically satisfied. If the normality condition was not satisfied, then $\rho'$ must pair strand $m-k$ with a strand numbered in the range $m-k+2$ to $m-1$ at the location~$*$ indicated by the vertical segment. Since $\beta \in S_{m-1}$, it follows that at least~2 of the top $m-1$ strands are paired at the right side of the $J^1[0,1]$-part of the satellite, contradicting that $\rho'$ is $(m-1)$-reduced. Finally, the inclusion-exclusion principle and the above bijections~\eqref{eq:color} and~\eqref{eq:bijectionsec2} imply
\begin{gather*}
 R^\tau_{\left(K, \;
 \raisebox{-.25cm}{\includegraphics[scale=.54]{images/paired1.png}}\right)}(z)\\
 =R^\tau_{\left(K,\; \raisebox{-.25cm}{\includegraphics[scale=.54]{images/paired3.png}}\right)}(z)
 -zR^\tau_{\left(K, \; \raisebox{-.25cm}{\includegraphics[scale=.54]{images/paired4.png}}\right)}(z)
 -zR^\tau_{\left(K,\; \raisebox{-.25cm}{\includegraphics[scale=.54]{images/paired5.png}}\right)}(z)
 +z^2R^\tau_{\left(K,\; \raisebox{-.25cm}{\includegraphics[scale=.54]{images/paired6.png}}\right)}(z).
\end{gather*}
(The factors of $z$ and $z^2$ appear since the first two bijections in (\ref{eq:bijectionsec2}) decrease the number of switches by $1$, while the last bijection decreases the number by 2.) This establishes \hyperlink{star}{$\star$}, so we are done.
\end{proof}

\begin{proof}[Proof of Theorem \ref{thm:main1}] With $n\geq 1$ fixed we prove the following statement by induction on~$m$. The theorem follows from the special case where $m=n$ and $\mu = 1$ (in view of Remark~\ref{rem:reduced}).

{\bf Inductive statement:} For any (linear combination of) Legendrian $n$-tangle(s) $\mu \subset J^1[0,1]$
 and any $1 \leq m \leq n$,
\[
R_{S(K,L_{m}\mu)}(z)=\sum_{\beta\in S_{m}}s^{\lambda(\beta)}\wt{R}^{(m)}_{(K,\beta\mu)}(z)
\]
(with the products $L_m\mu$ and $\beta\mu$ formed as in Remark~\ref{rem:BMW}).

The base case of $m=1$ is immediate since any normal ruling is $1$-reduced. For the inductive step, with $m\geq 2$ and the statement assumed for $m-1$ we compute
\begin{align*}
 \notag R_{S(K,L_m\mu)}(z)&=R_{S(K,L_{m-1}(1-z\sum\limits_{k=2}^m\alpha_{k,m})\gamma_m\mu)}(z)\\
 \notag &=R_{S(K,L_{m-1}\gamma_m\mu)}-z\sum_{k=2}^m R_{S(K,L_{m-1}\alpha_{k,m}\gamma_m\mu)}(z)\\
 \notag &=\sum_{\beta\in S_{m-1}}s^{\lambda(\beta)}\left(\wt{R}^{(m-1)}_{(K,\beta\gamma_m\mu)}(z) -z\sum_{k=2}^m\wt{R}^{(m-1)}_{(K,\beta\alpha_{k,m}\gamma_m\mu)}(z)\right)&&(\text{inductive hyp})\\
 &=\sum_{\beta\in S_{m-1}}s^{\lambda(\beta)}\left(\wt{R}^{(m-1)}_{(K,\beta\gamma_m\mu)}(z) -\sum_{k=2}^mR^{\tau(m-k+1,m)}_{(*;K,\beta*\gamma_m\mu)}(z)\right). &&(\text{Lemma~\ref{lemma:THMB}})
	\end{align*}
Note that since there are no crossings in $\beta$ involving the $m$-th strand, an $(m-1)$-reduced ruling is $m$-reduced unless at $*$ strand $m$ is paired
 with one of the strands numbered $1, \ldots, m-1$ in the $J^1[0,1]$-part of the satellite, i.e., a strand numbered $m-k+1$ with $2 \leq k \leq m$. Thus, continuing the above computation, we have
\begin{equation*}
 \sum_{\beta\in S_{m-1}}s^{\lambda(\beta)}\left(\wt{R}^{(m-1)}_{(K,\beta\gamma_m\mu)}(z)-\sum_{k=2}^mR^{\tau(m-k+1,m)}_{(*;K,\beta*\gamma_m\mu)}(z)\right)=\sum_{\beta\in S_{m-1}}s^{\lambda(\beta)}\wt{R}^{(m)}_{(K,\beta\gamma_m\mu)}.
\end{equation*}
Next note that any $\beta\in S_m$ can be written uniquely as $\beta=\beta'\sigma_{m-1}\cdots \sigma_{m-j}$ with $\beta'\in S_{m-1}$ and $0 \leq j \leq m-1$. (Here, $m-j = \beta^{-1}(m)$.) Moreover, such a factorization realizes $\beta$ as a~positive permutation braid, so that $\lambda(\beta)=\lambda(\beta')+j$. Using the definition of~$\gamma_m$, we see that
\[\sum_{\beta\in S_{m-1}}s^{\lambda(\beta)}\wt{R}^{(m)}_{(K,\beta\gamma_m\mu)}=\sum_{\beta\in S_m}s^{\lambda(\beta)}\wt{R}^{(m)}_{(K,\beta\mu)}.\] This completes the inductive step.
\end{proof}

\section{Relation to the Kauffman polynomial} \label{sec:Kauffman}
 We begin this section with a review of the definition of the $n$-colored Kauffman polynomial~$F_{n,K}$ via satelliting with the symmetrizer $\mathcal{Y}_n \in \operatorname{BMW}_n$ in the BMW algebra. Then, using an inductive characterization of $\mathcal{Y}_n$ from~\cite{Heck}, combined with the earlier characterization of $R^1_{n,K}$ we show how to recover $R^1_{n,K}$ as a specialization $F_{n,K}\big\vert_{a^{-1}=0}$. This is accomplished by relating the Legendrian~BMW element $L_n \in \operatorname{BMW}^\leg_{n}$ from Theorem~\ref{thm:main1} to~$\mathcal{Y}_n$ in a certain sub-quotient of~$\operatorname{BMW}_n$ defined in terms of Legendrian tangles. Along the way we pause to make a conjecture relating ungraded ruling polynomial skein modules with Kauffman skein modules in general.

\subsection{The Kauffman polynomial and the BMW algebra} Recall that the (framed) Kauffman polynomial (Dubrovnik version) is a regular isotopy invariant that assigns a Laurent polynomial to framed links $L\subset\R^3$ characterized by the skein relations (shown with blackboard framing)
\begin{gather*}
 \raisebox{-.5cm}{\includegraphics[scale=.6]{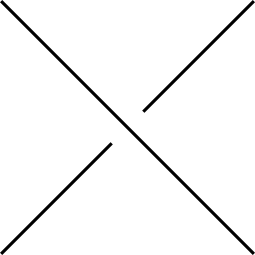}} -\raisebox{-.5cm}{\includegraphics[scale=.6]{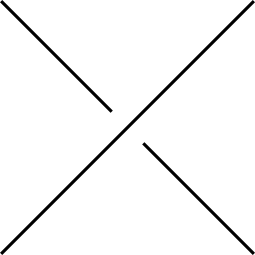}}=z\left(\; \raisebox{-.5cm}{\includegraphics[scale=.6]{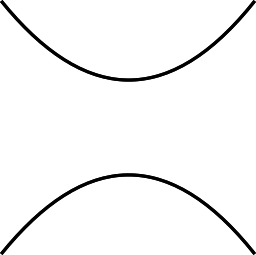}}- \raisebox{-.5cm}{\includegraphics[scale=.6]{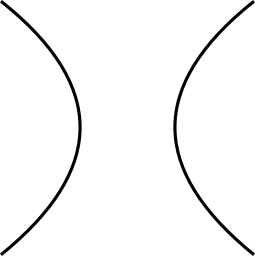}}\;\right), \tag{F1} \\
 \raisebox{-.5cm}{\includegraphics[scale=.6]{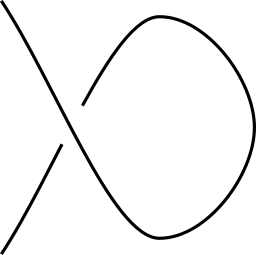}}=a^{-1}\raisebox{-.5cm}{\includegraphics[scale=.6]{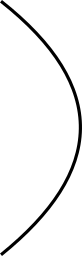}} \hspace{1cm} \raisebox{-.5cm}{\includegraphics[scale=.6]{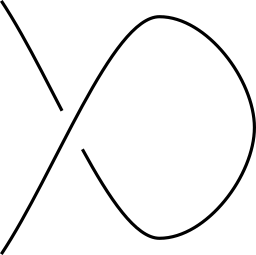}} =a\raisebox{-.5cm}{\includegraphics[scale=.6]{images/Kauf.png}}\,, \tag{F2} \\
 \raisebox{-.5cm}{\includegraphics[scale=.6]{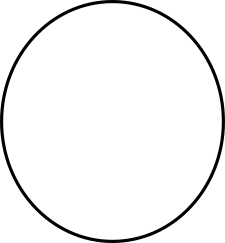}} \;\sqcup L =\left(\frac{a-a^{-1}}{z}+1\right)L, \tag{F3}
\end{gather*}
where $F_L:=1$ if $L$ is the empty link.\footnote{This is equivalent to choosing the normalization $F_U=\frac{a-a^{-1}}{z}+1$, where $U$ is the $0$-framed unknot.}

The ordinary BMW algebra, $\operatorname{BMW}_n$ (see \cite{BW, Mu}) can be defined as the Kauffman skein module for framed tangles in $[0,1]\times \R^2 \cong J^1[0,1]$ with $n$ boundary points on each component. In more detail, let $\mathfrak{Fr}_n$ denote the set of isotopy classes of {\it framed $n$-tangles} where we require that near $\partial J^1[0,1]$ framed $n$-tangles agree with the horizontal lines $z=i$, $y =0$, $1\leq i \leq n$ with framing vector $\frac{\partial}{\partial y}$; this requirement is maintained during isotopies. Working over the field of rational functions $\mathbb{F} = \Z(a,s)$ with $z=s-s^{-1}$, $\operatorname{BMW}_n= \mathbb{F}\mathfrak{Fr}_n/\mathcal{T}$ where $\mathcal{T}$ is the $\mathbb{F}$-submodule generated by the Kauffman polynomial skein relations. Multiplication in $\operatorname{BMW}_n$ is as in the Legendrian case, i.e., the product $\alpha \cdot \beta$ is $\beta$ stacked to the left of~$\alpha$.

For viewing framed $n$-tangles diagrammatically, we continue to use the $xz$-plane for projections, and we require the framing to be globally given by $\frac{\partial}{\partial y}$, that is, perpendicular to the projection plane and away from the viewer. This framing becomes isotopic to the blackboard framing when $n$-tangles are closed to become links in $J^1S^1$ (by identifying the left and right boundaries of $J^1[0,1]$). To view a Legendrian $n$-tangle $L \subset J^1[0,1]$ as a framed $n$-tangle $\operatorname{fr}(L) \in \mathfrak{Fr}_n$ we form a diagram for $\operatorname{fr}(L)$ from the front diagram of $L$ by smoothing left cusps and adding a small loop with a negative crossing at right cusps. See Fig.~\ref{fig:framed}. This has the property that the closure of $\operatorname{fr}(L)$ in $J^1S^1$ is framed isotopic to the Legendrian closure of~$L$ with its contact framing.

\begin{figure}
 \centering
		\labellist
\small
\pinlabel $L$ [t] at 64 0
\pinlabel $\operatorname{fr}(L)$ [t] at 250 0
\endlabellist

 \includegraphics{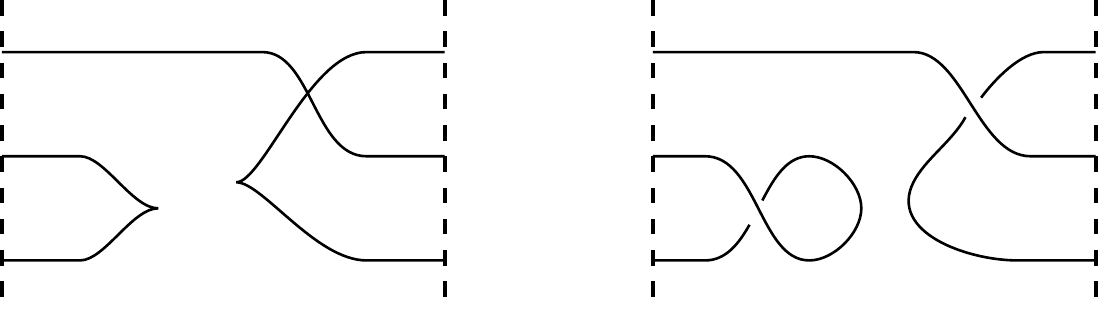}
		
		\quad
		
 \caption{The framed $n$-tangle $\operatorname{fr}(L)$ associated to a Legendrian $n$-tangle $L \subset J^1[0,1]$.}
 \label{fig:framed}
\end{figure}

The topological satellite operation produces from a framed knot $K \subset J^1\R$ and a framed $n$-tangle $L \subset J^1[0,1]$ a satellite link $S(K,L) \subset J^1\R$. A diagram for $S(K,L)$ arises from taking a blackboard framed diagram for $K$ and placing a blackboard framed diagram for the closure of $L$ in an (immersed) annular neighborhood of $K$ in the projection plane. When $K$ and $L$ are Legendrian, the framed knot type of $S(K,L)$ agrees with that of the previously defined Legendrian satellite (with contact framing).
 For $K \subset J^1\R$ (a framed link) and $\eta \in \operatorname{BMW}_n$, the Kauffman polynomial of the satellite $F_{S(K,\eta)}$ is defined by linearity as in (\ref{eq:BMWsatellite}).

\subsection[Symmetrizer in $\operatorname{BMW}_n$ and the $n$-colored Kauffman polynomial]{Symmetrizer in $\boldsymbol{\operatorname{BMW}_n}$ and the $\boldsymbol{n}$-colored Kauffman polynomial}

 The $n$-colored Kauffman polynomial is defined by satelliting with the {\it symmetrizer} $\mathcal{Y}_n \in \operatorname{BMW}_n$. (More general, colored Kauffman polynomials, where the coloring is by a partition $\lambda$, can be defined using other idempotents in $\operatorname{BMW}_n$ and are related to quantum invariants of type~$B$,~$C$, and~$D$; see, e.g.,~\cite{BlBe}.) The symmetrizer $\mathcal{Y}_n$ is characterized as the unique non-zero element of $\operatorname{BMW}_n$ that is idempotent, i.e., has $\mathcal{Y}_n^2 = \mathcal{Y}_n$, and satisfies
\begin{gather}
 \mathcal{Y}_n\sigma_i =s\mathcal{Y}_n, \qquad \text{for }1\leq i <n. \quad \quad \quad (\mbox{crossing absorbing property})\label{eats}
\end{gather}
The following inductive formula for $\mathcal{Y}_n$ is due to Heckenberger and Sch{\"u}ler in \cite{Heck}. Consider $Y_n \in \operatorname{BMW}_n$ defined by
 \begin{gather}
Y_1 = 1, \notag\\
\label{inductiveform}
 Y_n=Y_{n-1}\gamma_n+Y_{n-1}\frac{z}{1-s^{2n-3}a} \left(\sum_{i=1}^{n-1}as^{2n-2i-1}\left(\sum_{j=0}^{i-1}s^{j}D_{i-j,i+1}\right)\right),
\end{gather}
where
 \[D_{i,j}:=\begin{cases} {\includegraphics[scale=.8]{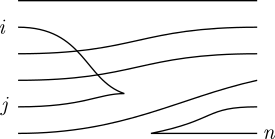}}\,, &\raisebox{.7cm}{ $i<j$,}\\
0,&\;i\geq j.
\end{cases}
\] The symmetrizer is then obtained as the normalization
$\mathcal{Y}_n=\frac{Y_n}{c_n}$ (with $c_n$ as in Definition~\ref{def:nruling}). For example, $\mathcal{Y}_2=\frac{1}{c_2}\big({\rm id}+s\sigma_1+\frac{sza}{1-sa}e_1\big)$. Note that $Y_n$ is quasi-idempotent (i.e., $Y_n^2 = c_n Y_n$) and also has the crossing absorbing property~(\ref{eats}).

\begin{Remark} \looseness=-1 Our notations $\sigma_i$, $s$, $a$, $e_i$ translate into the notations from \cite{Heck} as $g_i$, $q$, $r$, $r^{-1} e_i$. (The~$r^{-1}$ is because our $e_i$ is a Legendrian hook, so that $\operatorname{fr}(e_i)$ has an extra loop that produces the $r^{-1}$ factor.) Our $D_{i,j}$ is such that $a \cdot D_{i-j, i+1}$ is the $j$-th term in $d^+_{n,i}$ from~\cite{Heck}, the factor of $a$ again arising from our use of Legendrian tangles in representing the quasi-idempotent~$Y_n$. Note that \cite[Proposition~1]{Heck} gives an inductive formula for $\mathcal{Y}_n$ (notated there as~$S_n$) rather than~$Y_n$, and the denominator $q^n [\![ n ]\!]$ that appears there is accounted for by our factor of $\frac{1}{c_n}$ relating~$Y_n$ and~$\mathcal{Y}_n$.
\end{Remark}

\begin{Definition} \label{def:Kauffman}
Given a framed knot $K\subset\R^3$, we define the {\it $n$-colored Kauffman polynomial} of~$K$ by
\[F_{n,K}(a,s)=F_{S(K,\mathcal{Y}_n)}(a,z)\vert_{z=s-s^{-1}},\]
 where $\mathcal{Y}_n$ is the symmetrizer in $\operatorname{BMW}_n$.
\end{Definition}

\subsection{Ruling polynomials via specializations of the Kauffman polynomial}
For a Legendrian link in $L \subset J^1\R$ the Thurston--Bennequin number satisfies the inequality $\operatorname{tb}(L) \leq -\deg_a \widehat{F}_L(a,z)$, where $\widehat{F}_L = a^{-w(L)} F_L$ is the framing independent version of the Kauffman polynomial obtained by normalizing $F_L$ using the writhe of a diagram for~$L$. (See~\cite{FT, Ng3, T}.) This is equivalent to the inequality
\begin{gather} \label{eq:Estimate}
\deg_a F_L(a,z) \leq 0.
\end{gather}
 Thus, when $L$ is Legendrian $F_L \in \Z\big[a^{\pm 1}, z^{\pm}\big]$ does not contain positive powers of~$a$, and a~specialization $F_L(a,z)|_{a^{-1}=0}$ arises from simply setting $a^{-1}=0$.

\begin{Theorem}[\cite{R1}]\label{thm:rulings and the Kauffman poly}
For any Legendrian link $L\subset J^1\R$, the ungraded ruling polynomial is the specialization \[R^1_L(z)=F_L(a,z)|_{a^{-1}=0}.\]
\end{Theorem}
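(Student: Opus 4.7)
My plan is to use the uniqueness characterization of $R^1_L$ recorded after Fig.~\ref{fig:psuedo-gen}: the ungraded ruling polynomial is the unique assignment from Legendrian front diagrams to $\Z[z^{\pm 1}]$ determined by the skein relations (R1)--(R3) together with the normalization $z^{-1}$ on the standard Legendrian unknot. Accordingly, I would set $\wt{F}_L(z) := F_L(a,z)|_{a^{-1}=0}$ and show that $L \mapsto \wt{F}_L(z)$ is a well-defined assignment on Legendrian front diagrams, takes the value $z^{-1}$ on the unknot, and satisfies the relations (R1)--(R3). The theorem then follows by uniqueness.

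For well-definedness, the degree bound \eqref{eq:Estimate} gives $F_L \in \Z[a^{-1},z^{\pm 1}]$, so $\wt{F}_L \in \Z[z^{\pm 1}]$ is simply the coefficient of $a^0$ in $F_L$. For the Legendrian unknot $U$, the map $\operatorname{fr}$ produces a topological unknot with a single negative curl (from the right cusp), so by (F2) and (F3) its Kauffman polynomial is $a^{-1}\!\left(\frac{a-a^{-1}}{z}+1\right) = \frac{1-a^{-2}}{z}+a^{-1}$, which specializes to $z^{-1}$ at $a^{-1}=0$, matching the normalization. The disjoint-union relation (R3) follows by the analogous computation once the cusp-induced framing corrections on the added unknot are accounted for via (F2).

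The heart of the argument is to derive (R1) and (R2) from the Kauffman skein. Applied to a Legendrian crossing, relation (F1) relates the crossing to its topological swap; the swap is not itself a Legendrian front diagram, but a Reidemeister-II-type move rewrites it as a front diagram with cusp pairs together with framing loops, each loop contributing a factor of $a^{\pm 1}$ via (F2). Collecting terms and simplifying, one obtains a relation of the shape of (R1) together with correction terms lying in $a^{-1}\Z[a^{-1},z^{\pm 1}]$; setting $a^{-1}=0$ kills the corrections. The fishtail and related relations encoded in (R2) arise in a parallel way: the relevant configurations contain a monogon or curl that, in the Kauffman skein, evaluates to a factor of $a^{-1}$ times a simpler diagram, and hence vanishes under the specialization.

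The main obstacle is tracking carefully how cusps and framing loops transform the Kauffman skein when one restricts from framed to Legendrian front diagrams, and identifying precisely which terms become negligible under $a^{-1}=0$. I would organize the bookkeeping by working in a Legendrian-adapted Kauffman skein module in the spirit of the algebra $\operatorname{BMW}_n^\leg$ of Section~\ref{sec:ncolored}; the specialization $a^{-1}=0$ then realizes the ruling polynomial skein as a quotient of the Kauffman skein, and the specialized invariant coincides with $R^1_L$ by uniqueness.
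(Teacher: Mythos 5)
Your overall strategy is sound, and it is essentially the standard route: the paper itself gives no proof of Theorem~\ref{thm:rulings and the Kauffman poly} (it is quoted from \cite{R1}), and the verification you outline --- show that $L\mapsto F_L(a,z)|_{a^{-1}=0}$ is a Legendrian invariant satisfying (R1)--(R3) and the unknot normalization, then invoke the uniqueness characterization stated after Fig.~\ref{fig:psuedo-gen} --- is exactly the kind of check this paper performs in the proof of Proposition~\ref{prop:algebra}, where (R1)--(R3) are verified in the specialized Kauffman skein $\operatorname{BMW}^\infty_n$. Your computations for the normalization and for (R2), (R3) match the ones there, and using the bound \eqref{eq:Estimate} from \cite{FT,Ng3,T} for well-definedness is legitimate. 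Note, however, that the uniqueness characterization you invoke is itself a theorem of \cite{R1}, so what you are reconstructing is the second half of that paper's argument, taking its skein characterization of $R^1_L$ as given.

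The one step that is glossed over, and as written the one place the argument could fail, is your derivation of (R1). You assert that applying (F1) and (F2) yields (R1) ``plus correction terms lying in $a^{-1}\Z\big[a^{-1},z^{\pm 1}\big]$.'' But when you rewrite the smooth tangles produced by (F1) (the reversed crossing and the turn-back) in terms of framed images $\operatorname{fr}(\cdot)$ of Legendrian fronts, each right cusp contributes a factor of $a^{-1}$, so solving for the smooth tangle multiplies by \emph{positive} powers of $a$; a priori the resulting identity contains terms of positive $a$-degree, and then setting $a^{-1}=0$ termwise is not valid, because the specialization $|_{a^{-1}=0}$ is only defined, and only a ring homomorphism, on $\mathbb{F}^-$. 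What makes the argument work is that with the conventions for $\operatorname{fr}$ (right cusps mapped to the $a^{-1}$ kink) the powers of $a$ attached to the four diagrams of (R1) balance exactly, so that (R1) is an \emph{exact} consequence of (F1) and (F2) in the Kauffman skein, with no correction terms at all; this is precisely what the proof of Proposition~\ref{prop:algebra} asserts, and the specialization together with \eqref{eq:Estimate} is genuinely needed only for (R2), (R3), and well-definedness. Once this $a$-degree bookkeeping is carried out honestly (your proposed Legendrian-adapted Kauffman skein, i.e., the analogue of $\operatorname{BMW}^-_n\to\operatorname{BMW}^\infty_n$, is a good vehicle for it), your proof is correct.
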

We will want to perform a similar specialization on elements of $\operatorname{BMW}_n$ whose coefficients in $\mathbb{F} = \Z(a,s)$ may not be Laurent polynomials in $a$.
 To do so, note that the notion of degree in $a$ extends to arbitrary non-zero rational functions via
 $\deg_a\colon \Z(a,s) \setminus \{0\} \rightarrow \Z$,
\[
 \deg_a\left(\frac{f}{g}\right) = \deg_a f - \deg_a g, \qquad \mbox{for $f,g \in \Z[a,s]$}.
\]
In addition, we use the convention that $\deg_a0:=-\infty$. Moreover, on the subring $\mathbb{F}^- := \{F\in \mathbb{F} \,|  \allowbreak \deg_a F\leq 0 \}$ we can define a specialization $\vert_{a^{-1}=0}\colon \mathbb{F}^-\rightarrow \mathbb{Z}(s)$, by
\[
\frac{f}{g}\bigg\vert_{a^{-1}=0}:=\begin{cases}0, & \deg_a(f/g)<0,\\
\dfrac{c_a(f)}{c_a(g)},&\deg_a(f/g)= 0,\end{cases}
\]
where $c_a(f) \in \Z[s]$ denotes the leading coefficient in $a$ of $f$.
\begin{Proposition} The specialization $\vert_{a^{-1}=0}\colon \mathbb{F}^-\rightarrow \mathbb{Z}(s)$ is a well-defined, unital ring homomorphism.
 \end{Proposition}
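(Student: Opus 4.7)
The plan is to realize the specialization $|_{a^{-1}=0}$ as the restriction, along a canonical embedding, of the constant-term homomorphism on a power series ring; this will immediately yield both well-definedness and the ring homomorphism properties. First I would verify that $\F^-$ is a subring of $\F$: it contains $1$ since $\deg_a 1 = 0$; the inequality $\deg_a(F + G) \leq \max(\deg_a F, \deg_a G) \leq 0$ gives closure under addition; and the equality $\deg_a(FG) = \deg_a F + \deg_a G \leq 0$ (which holds because $\F = \Z(s)(a)$ is the fraction field of $\Z(s)[a]$ and $\deg_a$ is multiplicative on polynomials) gives closure under multiplication.

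The key step is to view $\F = \Z(s)(a)$ and use the Laurent-expansion-at-infinity embedding $\iota \colon \Z(s)(a) \hookrightarrow \Z(s)((a^{-1}))$ into the field of formal Laurent series in $a^{-1}$ over $\Z(s)$. I claim $\iota$ sends $\F^-$ into the power series subring $R := \Z(s)[[a^{-1}]]$. Indeed, for $F = f/g$ with $f = f_m a^m + \cdots + f_0$ and $g = g_n a^n + \cdots + g_0$ in $\Z(s)[a]$ and $m \leq n$, one writes
\[
\iota(F) = \frac{f_m a^{m-n} + f_{m-1} a^{m-n-1} + \cdots}{g_n + g_{n-1} a^{-1} + \cdots}
\]
and inverts the denominator via the geometric-series expansion $(g_n + g_{n-1}a^{-1} + \cdots)^{-1} = g_n^{-1}\bigl(1 - (g_{n-1}/g_n)a^{-1} + \cdots\bigr)$. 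Reading off the constant term shows $\iota(F) \in R$ with constant term $f_m/g_n = c_a(f)/c_a(g)$ when $m = n$ and with constant term $0$ when $m < n$. Thus the piecewise rule from the statement agrees on $\F^-$ with the composition $\epsilon \circ \iota$, where $\epsilon \colon R \twoheadrightarrow \Z(s)$ is reduction modulo the maximal ideal $(a^{-1}) \subset R$.

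Since $\iota$ is an inclusion of rings and $\epsilon$ is a quotient by an ideal, both are unital ring homomorphisms; hence so is $|_{a^{-1}=0} = \epsilon \circ \iota$. In particular the specialization does not depend on the chosen representative $f/g$, which settles well-definedness, and unitality, additivity, and multiplicativity follow at once. The only mildly subtle point is the matching in the middle paragraph---checking that the piecewise formula truly reads off the constant term of the Laurent expansion at infinity---but this is just the routine power-series inversion indicated above, after which the remainder of the argument is essentially tautological.
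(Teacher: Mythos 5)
Your proof is correct, but it proceeds by a genuinely different route than the paper's. The paper verifies everything by hand: well-definedness follows from cross-multiplying $f_1g_2=g_1f_2$ and using multiplicativity of the leading coefficient $c_a$, and then additivity and multiplicativity of the specialization are checked case by case according to whether the $a$-degrees involved are $0$ or negative. You instead identify $\vert_{a^{-1}=0}$ structurally, as the restriction to $\mathbb{F}^-$ of the composite of the Laurent-expansion-at-infinity embedding $\Z(s)(a)\hookrightarrow \Z(s)((a^{-1}))$ with reduction modulo the ideal $\big(a^{-1}\big)$ of $\Z(s)[[a^{-1}]]$; once you check that $\mathbb{F}^-$ lands in $\Z(s)[[a^{-1}]]$ and that the constant term of the expansion reproduces the paper's piecewise formula (your power-series inversion computation, which is the only place real verification happens), well-definedness, unitality, additivity, and multiplicativity are all inherited from the two canonical ring maps. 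In effect you are observing that $\mathbb{F}^-$ is the valuation ring of the degree-at-infinity valuation and that the specialization is its residue map. What your approach buys is conceptual economy and no case analysis; what the paper's buys is a completely elementary, self-contained argument using only leading-coefficient bookkeeping, with no appeal to formal Laurent series. One small point to make explicit if you write this up: your $c_a(f)/c_a(g)$ is computed with $f,g\in\Z(s)[a]$ while the paper takes $f,g\in\Z[a,s]$ with $c_a(f)\in\Z[s]$; clearing denominators scales both leading coefficients by the same factor, so the two conventions agree, but the remark is worth a sentence.
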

\begin{proof}Suppose $\frac{f_1}{g_1}=\frac{f_2}{g_2}$ in $\F^-$. Then $f_1g_2=g_1f_2$ and so $c_a(f_1)c_a(g_2)=c_a(g_1)c_a(f_2)$, since $c_a(fg)=c_a(f)c_a(g)$ is true for polynomials $f$ and $g$. It follows that $\frac{f_1}{g_1}\big\vert_{a^{-1}=0}=\frac{f_2}{g_2}\big\vert_{a^{-1}=0}$ and so the specialization $\big\vert_{a^{-1}=0}$ is well-defined. Note that $1\vert_{a^{-1}=0}=1$ follows from the definition. Now suppose $\frac{f_1}{g_1},\frac{f_2}{g_2}\in\F^-$. If $\deg_a({f_1}/{g_1})=\deg_a({f_2}/{g_2})=0$, then $\deg_a(f_1g_2)=\deg_a(g_1f_2)=\deg_a(g_1g_2)$. In which case, \begin{align*}
\left(\frac{f_1}{g_2}+\frac{f_2}{g_2}\right)\bigg\vert_{a^{-1}=0} & =\frac{f_1g_2+g_1f_2}{g_1g_2}\bigg\vert_{a^{-1}=0}\\
& =\frac{c_a(f_1)c_a(g_2)+c_a(g_1)c_a(f_2)}{c_a(g_1)c_a(g_2)} =\frac{f_1}{g_1}\bigg\vert_{a^{-1}=0}+\frac{f_2}{g_2}\bigg\vert_{a^{-1}=0}.
\end{align*} If $\deg_a({f_1}/{g_1}),  \deg_a({f_2}/{g_2})<0$, then
\[
\deg_a(f_1g_2+g_1f_2)\leq \max\{\deg_a(f_1g_2),\deg_a(g_1f_2)\}<\deg_a(g_1g_2)
\] and so $\big(\frac{f_1}{g_1}+\frac{f_2}{g_2}\big)\big\vert_{a^{-1}=0}= 0 = \frac{f_1}{g_1}\big\vert_{a^{-1}=0}+\frac{f_2}{g_2}\big\vert_{a^{-1}=0}$ holds by definition. Lastly, if $\deg_a(f_1/g_1)<\deg_a(f_2/g_2)=0$, then $\deg_a(f_1g_2+g_1f_2)=\deg_a(g_1f_2)=\deg(g_1g_2)$ and \[\left(\frac{f_1}{g_2}+\frac{f_2}{g_2}\right)\bigg\vert_{a^{-1}=0}=\frac{f_1g_2+g_1f_2}{g_1g_2}\bigg\vert_{a^{-1}=0}=\frac{c_a(g_1)c_a(f_2)}{c_a(g_1)c_a(g_2)}=\frac{f_1}{g_1}\bigg\vert_{a^{-1}=0}+\frac{f_2}{g_2}\bigg\vert_{a^{-1}=0}.\] It remains to show $\vert_{a^{-1}=0}$ preserves multiplication. In the case where
 $\deg_a(f_1/g_1)<0$, it follows that $\deg_a(f_1f_2)<\deg_a(g_1g_2)$ and so
 \[\left(\frac{f_1f_2}{g_1g_2}\right)\bigg\vert_{a^{-1}=0}=0 =\frac{f_1}{g_1}\bigg\vert_{a^{-1}=0}\cdot\frac{f_2}{g_2}\bigg\vert_{a^{-1}=0}.\]
 The remaining case where $\deg_a(f_1/g_1)=\deg_a(f_1/g_1)=0$ immediately follows from the fact that $c_a$ preserves multiplication for then
\begin{gather*}
\left(\frac{f_1f_2}{g_1g_2}\right)\bigg\vert_{a^{-1}=0}=\frac{c_a(f_1f_2)}{c_a(g_1g_2)} =\frac{c_a(f_1)c_a(f_2)}{c_a(g_1)c_a(g_2)} =\frac{f_1}{g_1}\bigg\vert_{a^{-1}=0}\cdot\frac{f_2}{g_2}\bigg\vert_{a^{-1}=0}.\tag*{\qed}
\end{gather*}\renewcommand{\qed}{}
\end{proof}

In the following we will make a connection between the Legendrian BMW algebra from Section~\ref{sec:ncolored} and $\operatorname{BMW}_n$. Towards this end, define
\[
\operatorname{BMW}^-_n= \operatorname{Span}_{\mathbb{F}^-}\mathfrak{Leg}_n \subset \operatorname{BMW}_n
\]
to be the $\mathbb{F}^-$-submodule generated by (framed isotopy classes of) Legendrian $n$-tangles. Next, let $\mathfrak{m} \subset \mathbb{F}^-$ denote the maximal ideal $\mathfrak{m} = \{F \in \mathbb{F}^-\,|\, \deg_a F < 0\}$, that is the kernel of $\vert_{a^{-1}=0}$, and consider the quotient $\mathbb{F}^-$-module
\[
\operatorname{BMW}^\infty_n = \operatorname{BMW}^-_n/\mathfrak{m} \cdot \operatorname{BMW}^-_n,
\]
with the projection map notated as
\[
\operatorname{BMW}^-_n \rightarrow \operatorname{BMW}^\infty_n, \qquad y \mapsto y\vert_{a^{-1}=0}.
\]

Note that the estimate~(\ref{eq:Estimate}) shows that for any fixed Legendrian knot $K \subset J^1\R$ the $\mathbb{F}$-module homomorphism $\operatorname{BMW}_n \rightarrow \mathbb{F}$, $L \mapsto F_{S(K,L)}(a,s)$ maps $\operatorname{BMW}_n^-$ to $\mathbb{F}^-$. Moreover, the specialization $\operatorname{BMW}^-_n \rightarrow \mathbb{Z}(s)$, $L \mapsto F_{S(K,L)}(a,s)\big\vert_{a^{-1}=0}$ induces a well defined map \mbox{$\operatorname{BMW}^\infty_n \rightarrow \mathbb{Z}(s)$}.
\begin{Proposition} \label{prop:algebra}
There is an $\mathcal{R}$-algebra homomorphism $\varphi\colon \operatorname{BMW}_n^{\leg} \rightarrow \operatorname{BMW}_{n}^\infty$ induced by the map $\mathfrak{Leg}_n \rightarrow \mathfrak{Fr}_n$. Moreover, we have a commutative diagram
\[
\xymatrix{ \operatorname{BMW}^-_n \ar[rr]^{F_{S(K,\cdot)}} \ar[d]_{|_{a^{-1}=0}} & & \mathbb{F}^- \ar[rr]^{|_{a^{-1}=0}} & & \mathbb{Z}(s). \\ \operatorname{BMW}^\infty_n \ar[rrrru] & & & & \\ \operatorname{BMW}^\leg_n \ar[u]^{\varphi} \ar[rrrruu]_{R^1_{S(K,\cdot)}} & & & &}
\]
\end{Proposition}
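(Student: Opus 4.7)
The plan is to define $\varphi$ on the generating module by $L \mapsto [\operatorname{fr}(L)] \in \operatorname{BMW}^\infty_n$, extended $\mathcal{R}$-linearly via the inclusion $\mathcal{R} \subset \mathbb{F}^-$, and then verify that this descends to the quotient $\operatorname{BMW}^\leg_n = \mathcal{R}\leg_n/\mathcal{S}$. There are two things to check: (a)~the assignment respects Legendrian isotopy, so that it is well defined on~$\leg_n$; and (b)~the ruling polynomial skein relations of Figure~\ref{fig:psuedo-gen} are sent to zero in~$\operatorname{BMW}^\infty_n$. Item (a) is routine: each Legendrian Reidemeister move of a front diagram (triple point, cusp-crossing, cusp-pair moves) induces a framed isotopy of $\operatorname{fr}(L)$ with boundary fixed in $J^1[0,1]$, because the small negative-crossing loops inserted by $\operatorname{fr}$ at right cusps carry exactly the framing information needed to match the contact framing before and after the move.

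The main step is (b). I would apply $\operatorname{fr}$ to each diagram in (R1)--(R3) and then rewrite the resulting identities in $\operatorname{BMW}^-_n$ using the Kauffman skein relations (F1)--(F3). Each discrepancy between a ruling polynomial relation and its Kauffman counterpart concentrates in terms whose $\mathbb{F}$-coefficients lie in $\mathfrak{m}$: the extra loops that $\operatorname{fr}$ places at right cusps contribute factors of $a^{\pm 1}$ via (F2), and when these are combined with the Kauffman expansion of the Legendrian crossings via (F1), the extraneous contributions have strictly negative $a$-degree and hence die in $\operatorname{BMW}^\infty_n = \operatorname{BMW}^-_n/\mathfrak{m}\cdot\operatorname{BMW}^-_n$. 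The unknot normalization (R3) is checked directly: the framed Legendrian unknot has writhe $-1$ from its cusp loop, so its Kauffman polynomial is $a^{-1}\bigl((a-a^{-1})/z+1\bigr)$, which specializes under $\vert_{a^{-1}=0}$ to $z^{-1}$, matching the ungraded ruling polynomial of the unknot. This diagrammatic bookkeeping is the main obstacle of the argument; it is essentially the tangle-level reformulation of Theorem~\ref{thm:rulings and the Kauffman poly}.

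Multiplicativity of $\varphi$ is then immediate: since the loops attached by $\operatorname{fr}$ at right cusps are local modifications, $\operatorname{fr}(\alpha\cdot\beta) = \operatorname{fr}(\alpha)\cdot\operatorname{fr}(\beta)$ as framed $n$-tangles, and both $\operatorname{BMW}^\leg_n$ and $\operatorname{BMW}^-_n$ use the same left-stacking convention, so $\varphi$ is an $\mathcal{R}$-algebra homomorphism.

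For the commutative diagram, the upper triangle exists because the composition $\operatorname{BMW}^-_n \to \mathbb{F}^- \to \mathbb{Z}(s)$ vanishes on $\mathfrak{m}\cdot\operatorname{BMW}^-_n$: for $f\in\mathfrak{m}$ and any $L\in\mathfrak{Leg}_n$, $F_{S(K,fL)}\vert_{a^{-1}=0} = f\vert_{a^{-1}=0}\cdot F_{S(K,L)}\vert_{a^{-1}=0} = 0$, using that $\vert_{a^{-1}=0}$ is a ring homomorphism (the preceding Proposition). Hence the diagonal map $\operatorname{BMW}^\infty_n \to \mathbb{Z}(s)$ is induced and the upper triangle commutes by construction. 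The outer (lower) triangle reduces to the identity
\[
F_{S(K,\operatorname{fr}(L))}(a,s)\vert_{a^{-1}=0} \;=\; R^1_{S(K,L)}(z)\vert_{z=s-s^{-1}}
\]
for every Legendrian $n$-tangle $L$, which is Theorem~\ref{thm:rulings and the Kauffman poly} applied to the Legendrian link $S(K,L)$, noting (as recalled in Section~\ref{sec:Kauffman}) that the framed-knot type of the closure of $\operatorname{fr}(L)$ around $K$ coincides with that of the Legendrian satellite $S(K,L)$ equipped with its contact framing.
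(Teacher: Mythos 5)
Your proposal is correct and follows essentially the same route as the paper's proof: verify that (R1)--(R3) are killed in $\operatorname{BMW}^\infty_n$ using (F1) for the crossing relation, (F2) for the zig-zag (the $a^{-1}$ factor lies in $\mathfrak{m}$), and the explicit unknot computation for (R3), then get the upper triangle from the construction of the induced map on the quotient and the lower triangle from Theorem~\ref{thm:rulings and the Kauffman poly} applied to $S(K,L)$. The only difference is presentational: you spell out isotopy-invariance, multiplicativity, and the framing compatibility of the satellite, which the paper treats as immediate from the preceding discussion.
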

\begin{proof} To see that $\varphi$ is well-defined, note that the ruling polynomial relation (R1) holds in $\operatorname{BMW}_n$ since it is implied by the Kauffman relation (F1). Moreover, using the (F2), we see that when $L \subset J^1[0,1]$ has a zig-zag, and $L' \subset J^1[0,1]$ is the Legendrian obtained by removing the zig-zag from $L$, we have $L = a^{-1}L'$ in $\operatorname{BMW}^-$, and this implies that $[L]=0$ in $\operatorname{BMW}^\infty$ as required by (R2). Finally, to verify (R3) when $K \in \leg_n$ we can compute in $\operatorname{BMW}^\infty_n$
\begin{align*}
K \sqcup \raisebox{-.3cm}{\includegraphics[scale=.5]{images/unknot}} & = a^{-1} K\sqcup \raisebox{-.3cm}{\includegraphics[scale=.4]{images/Kauf7}} = a^{-1}\left(\frac{a-a^{-1}}{z} +1\right) K \\
& = z^{-1}K + \big(a^{-1}-a^{-2}/z\big) K = z^{-1}K,
\end{align*}
since $\big(a^{-1}-a^{-2}/z\big) \in \mathfrak{m}$.

The upper triangle of the diagram is commutative by definitions, and the commutativity of the lower triangle follows from Theorem~\ref{thm:rulings and the Kauffman poly}.
\end{proof}

\begin{Conjecture}When $\operatorname{BMW}^\leg_n$ is defined over $\mathbb{Z}(s)$, the map $\varphi$ is an algebra isomorphism.
\end{Conjecture}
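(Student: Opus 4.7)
The strategy I would take is to show that $\varphi$, with both sides regarded as $\mathbb{Z}(s)$-modules, is simultaneously surjective and injective by identifying a common basis, namely Legendrian realizations of the $(2n-1)!!$ Brauer-type basis elements of the classical $\operatorname{BMW}_n$. Surjectivity is essentially formal: $\operatorname{BMW}_n^-$ is, by definition, the $\mathbb{F}^-$-span of framed isotopy classes of Legendrian $n$-tangles, so $\operatorname{BMW}_n^\infty = \operatorname{BMW}_n^-/\mathfrak{m}\cdot\operatorname{BMW}_n^-$ is spanned over $\mathbb{F}^-/\mathfrak{m} \cong \mathbb{Z}(s)$ by classes of Legendrian tangles. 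Each such class is in the image of $\varphi$, giving surjectivity as soon as we allow $\mathbb{Z}(s)$-coefficients on $\operatorname{BMW}_n^\leg$.

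For injectivity, I would fix a set $\{B_i\}$ of $(2n-1)!!$ Legendrian $n$-tangles whose $\operatorname{fr}$-images realize the standard Brauer basis of $\operatorname{BMW}_n$. The plan has two parts. First, show that $\{\varphi(B_i)\}$ is a $\mathbb{Z}(s)$-basis of $\operatorname{BMW}_n^\infty$: linear independence follows by lifting a hypothetical $\mathbb{Z}(s)$-relation to an identity in $\operatorname{BMW}_n$ of the form $\sum c_i B_i = \sum m_j x_j$ with $m_j \in \mathfrak{m}$ and $x_j \in \operatorname{BMW}_n^-$, expanding everything in the $\mathbb{F}$-basis $\{B_i\}$, and comparing leading $a$-coefficients to force each $c_i = 0$; spanning follows by applying the standard Brauer reduction to any Legendrian tangle inside $\operatorname{BMW}_n$ and then specializing $a^{-1}=0$. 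Second, show that this same reduction can be carried out inside $\operatorname{BMW}_n^\leg$ using only the ruling skein relations (R1)--(R3) together with the derived fishtail and double-crossing relations. If both steps succeed, then $\{B_i\}$ generates $\operatorname{BMW}_n^\leg$ over $\mathbb{Z}(s)$ and $\varphi$ maps a generating set to a basis, forcing it to be an isomorphism.

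The main obstacle is precisely the issue flagged in Remark~\ref{rem:BMW}: the absence of a known presentation of $\operatorname{BMW}_n^\leg$. Without one, it is not clear that the Legendrian-only rewriting system supplied by (R1)--(R3) is strong enough to match what the Kauffman skein relations achieve for framed diagrams. A key subtlety is that Legendrian diagrams are forced to carry cusps, and the Legendrian analogue of the Kauffman zig-zag move is (R2), which sets a zig-zag equal to $0$ rather than to a unit multiple of its resolution; consequently the usual Brauer reduction in $\operatorname{BMW}_n$ cannot be transported verbatim into $\operatorname{BMW}_n^\leg$. The cleanest route to closing the gap would probably be either to prove a confluence (diamond lemma) statement for a rewriting system built from (R1)--(R3) together with the Legendrian Reidemeister moves, or to construct, for each $B_i$, a Legendrian isotopy invariant on $\operatorname{BMW}_n^\leg$ valued in $\mathbb{Z}(s)$ that detects the coefficient of $B_i$ in any expansion --- a natural candidate being specializations of ruling polynomials of satellites $S(K,\cdot)$ for a cleverly chosen test knot $K$, coupled with Theorem~\ref{thm:rulings and the Kauffman poly}. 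Either approach requires genuinely new structural input about Legendrian tangles beyond what is used in the earlier sections, which is why the statement is recorded only as a conjecture.
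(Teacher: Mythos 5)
This statement is recorded in the paper only as a conjecture --- no proof is given, and Remark~\ref{rem:BMW} explicitly leaves even a presentation of $\operatorname{BMW}_n^\leg$ as an open problem --- so the real question is whether your outline closes the gap, and it does not. The surjectivity observation is fine (and essentially formal, as you say), but both halves of your injectivity plan rest on unestablished claims. First, the ``compare leading $a$-coefficients'' step presupposes that elements of $\operatorname{BMW}_n^-$, and in particular of $\mathfrak{m}\cdot\operatorname{BMW}_n^-$, have coordinates in $\mathbb{F}^-$ (resp.\ in $\mathfrak{m}$) when expanded in the Brauer basis of $\operatorname{BMW}_n$. That is a lattice-compatibility statement: a priori an $\mathbb{F}^-$-combination of Legendrian tangles can have Brauer-basis coefficients of positive $a$-degree, in which case the relation $\sum_i c_iB_i=\sum_j m_jx_j$ yields no contradiction and the argument stalls. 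What is needed is a tangle analogue of the estimate \eqref{eq:Estimate} (equivalently, an $a$-degree bound on the expansion coefficients of a Legendrian tangle in the Brauer basis); the link-level estimate underlying Theorem~\ref{thm:rulings and the Kauffman poly} does not immediately give this, since recovering coordinates from closures involves inverting a pairing matrix whose $a$-degrees must also be controlled.

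Second, and more fundamentally, the spanning step --- that the $(2n-1)!!$ Legendrian lifts $B_i$ generate $\operatorname{BMW}_n^\leg$ over $\mathbb{Z}(s)$ using only (R1)--(R3) and Legendrian isotopy --- is exactly the hard content of the conjecture, and you concede you cannot carry it out; as you note, the zig-zag relation (R2) kills stabilized tangles rather than rescaling them, so the standard Brauer reduction does not transport to the Legendrian setting, and Legendrian isotopy is far more rigid than framed isotopy. Since $\mathbb{Z}(s)$ is a field, this spanning claim is essentially equivalent to the dimension bound $\dim_{\mathbb{Z}(s)}\operatorname{BMW}_n^\leg\leq (2n-1)!!$, i.e.\ to solving the open structural problem of Remark~\ref{rem:BMW}. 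So what you have is a reasonable reduction of the conjecture to two precise open statements (an $a$-degree bound for tangle expansions, and a spanning/confluence result for the Legendrian skein relations), not a proof; the suggestions of a diamond-lemma argument or satellite-ruling-polynomial ``coordinate functionals'' are plausible directions, but neither is executed here.
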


\begin{Remark} A similar conjecture can be made involving the (suitably defined) $1$-graded ruling polynomial skein module and the Kauffman skein module of any contact $3$-manifold, $M$.
 \end{Remark}

Recall the element $L_n \in \operatorname{BMW}^\leg_n$ from Section~\ref{sec:Ln}.
\begin{Proposition} \label{prop:BMW}
For any $n\geq 1$, we have
\begin{enumerate}\itemsep=0pt
\item[$1)$] $\mathcal{Y}_n \in \operatorname{BMW}^-_n$, and
\item[$2)$] $\varphi\big(\frac{1}{c_n} L_n\big) = \mathcal{Y}_n\big\vert_{a^{-1}=0}$ holds in $\operatorname{BMW}^\infty_n$.
\end{enumerate}
\end{Proposition}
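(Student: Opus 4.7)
The plan is to prove (1) and (2) simultaneously by induction on $n$, matching the Heckenberger--Sch\"uler recursion for $Y_n$ term by term against the defining recursion $L_n = L_{n-1}\beta_n$.

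For part (1), observe that $\gamma_n$ is a $\Z[s^{\pm 1}]$-linear combination of positive permutation braids, each $D_{i,j}$ (for $i<j$) is (the framing of) a Legendrian tangle, and the only non-polynomial scalar appearing in the Heckenberger--Sch\"uler formula for $Y_n$ has the form $\tfrac{a z\, s^N}{1-s^{2n-3}a}$, which lies in $\mathbb{F}^-$ since both numerator and denominator have $a$-degree one. By induction $Y_{n-1}\in\operatorname{BMW}_{n-1}^-$, which sits inside $\operatorname{BMW}_n^-$ via the trailing-strand convention of Remark~\ref{rem:BMW}; hence $Y_n\in\operatorname{BMW}_n^-$, and dividing by $c_n\in\mathcal{R}\subset\mathbb{F}^-$ yields $\mathcal{Y}_n\in\operatorname{BMW}_n^-$.

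For part (2), it suffices to show $\varphi(L_n) = Y_n|_{a^{-1}=0}$ in $\operatorname{BMW}_n^\infty$; dividing by $c_n$ then gives the claim. The base case $n=1$ is trivial. For the inductive step, multiplicativity of $\varphi$ and of $|_{a^{-1}=0}$, together with the induction hypothesis $\varphi(L_{n-1}) = Y_{n-1}|_{a^{-1}=0}$, the definition $\beta_n = (1-z\sum_{k=2}^n\alpha_{k,n})\gamma_n$, and the Heckenberger--Sch\"uler recursion for $Y_n$, reduce the required equality to
\[
-z\,Y_{n-1}\sum_{k=2}^n\alpha_{k,n}\gamma_n \;=\; \left.Y_{n-1}\cdot\frac{z}{1-s^{2n-3}a}\sum_{i=1}^{n-1}as^{2n-2i-1}\sum_{j=0}^{i-1}s^j D_{i-j,i+1}\,\right|_{a^{-1}=0}
\]
in $\operatorname{BMW}_n^\infty$. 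Since $\tfrac{a}{1-s^{2n-3}a}\bigl|_{a^{-1}=0} = -s^{3-2n}$, dividing by $-z$ rewrites this as the identity
\[
\sum_{k=2}^{n} Y_{n-1}\,\alpha_{k,n}\,\gamma_n \;=\; \sum_{i=1}^{n-1} s^{2-2i}\sum_{j=0}^{i-1} s^j\, Y_{n-1}\,D_{i-j,\,i+1}
\]
in $\operatorname{BMW}_n^\infty$.

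The technical heart of the proof, and the main obstacle, is this last identity. My strategy is to exploit the crossing-absorbing property $Y_{n-1}\sigma_i = sY_{n-1}$ for $1\leq i\leq n-2$, which, combined with the Legendrian relation $\sigma_i e_i = 0$ (equation~\eqref{eq:fish2}, inherited in $\operatorname{BMW}_n^\infty$ via Proposition~\ref{prop:algebra}), forces $Y_{n-1}e_i = 0$ for $1\leq i\leq n-2$. Expanding each $\alpha_{k,n}$ via Lemma~\ref{lemma:alpha} as the signed sum $\sum_{X\subset\operatorname{cr}(C_{k,n})}(-z)^{|X|}r_X(C_{k,n})$, the vast majority of resolved diagrams contain a hook $e_j$ with $j\leq n-2$ and therefore vanish on left-multiplication by $Y_{n-1}$. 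The surviving summands of $Y_{n-1}\alpha_{k,n}\gamma_n$ should be identifiable with tangles of the form $Y_{n-1}D_{p,q}$, and the delicate bookkeeping of signs $(-z)^{|X|}$, of powers of $s$ absorbed from the surviving $\sigma_i$-factors in $Y_{n-1}\gamma_n$, and of the correct re-indexing $(k,q{-}p{+}1)\leftrightarrow(i{+}1,p)$ between the two sums should produce exactly the coefficients $s^{2-2i+j}$ required on the right-hand side. This matching of coefficients term by term is where I expect the argument to require the most care.
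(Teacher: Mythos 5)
Your part (1) and your reduction of part (2) are fine and agree with the paper: the paper likewise notes that every tangle in \eqref{inductiveform} is Legendrian with coefficients in $\mathbb{F}^-$, and its proof of (2) (Proposition~\ref{prop:almostmainthm}) is exactly an induction showing $\varphi(L_n)=Y_n\vert_{a^{-1}=0}$, using $\tfrac{a}{1-s^{2n-3}a}\big\vert_{a^{-1}=0}=-s^{3-2n}$ to arrive at the same key identity you isolate. The gap is in your proposed proof of that identity, which is both not carried out and based on a mechanism that does not apply. By Lemma~\ref{lemma:alpha}, $C_{k,n}=\sigma_{n-k+1}\cdots\sigma_{n-2}\,e_{n-1}\,\sigma_{n-2}\cdots\sigma_{n-k+1}$ and a resolution $r_X$ simply deletes the crossings in $X$; hence \emph{every} resolved diagram $r_X(C_{k,n})$ is a word $w_1e_{n-1}w_2$ whose only hook is $e_{n-1}$ (the turn-backs never migrate off the bottom pair of strands). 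Since $L_{n-1}$ occupies the top $n-1$ strands (Remark~\ref{rem:BMW}), the relation $L_{n-1}e_j=0$ for $j\leq n-2$, while true, kills none of these terms --- already for $k=n=3$ the four resolutions $\sigma_1e_2\sigma_1$, $\sigma_1e_2$, $e_2\sigma_1$, $e_2$ all survive. So ``the vast majority of resolved diagrams vanish'' is false, and the term-by-term matching you defer to is precisely the part that needs a different argument.

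What actually makes the identity work in the paper is not vanishing but \emph{absorption with cancellation}: unresolved crossings adjacent to $L_{n-1}$ are absorbed as factors of $s$, and summing over resolved subsets produces binomial collapses $(s-z)^m=s^{-m}$ (Lemma~\ref{lemma:beta}, which rewrites $L_{n-1}\beta_n$ in terms of the tangles $D_{n-j,n}$ under the inductively known crossing-absorbing property of $L_{n-1}$); then the multiplication by $\gamma_n$ is computed by Reidemeister~II moves together with the fishtail relation, giving $D_{i,n}\gamma_n=\sum_{r=0}^{n-i-1}s^rD_{i,n-r}$ (Lemma~\ref{lemma:mult}); and finally a substantial reindexing and geometric-series resummation is needed to reach the Heckenberger--Sch\"uler coefficients in \eqref{inductiveform}. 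None of these three steps appears in your proposal, and the hook-killing heuristic you substitute for them would leave you with essentially all of $\sum_k L_{n-1}\alpha_{k,n}\gamma_n$ still to evaluate. To repair the argument, replace the vanishing claim by the absorption computation (keeping the crossing-absorbing property of $L_{n-1}$ as part of the inductive hypothesis, or deriving it from $L_{n-1}=Y_{n-1}\vert_{a^{-1}=0}$ as you do), prove the $D_{i,n}\gamma_n$ identity, and then carry out the coefficient matching explicitly.
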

We prove (1) now; the proof of~(2) is deferred until Section~\ref{sec:prop}.
\begin{proof}[Proof of (1)] Note that all the tangles involved in the inductive characterization of $Y_n$ from (\ref{inductiveform}) are Legendrian with coefficients in~$\mathbb{F}^-$, and the normalizing factor $\frac{1}{c_n}$ also belongs to~$\mathbb{F}^-$.
\end{proof}

The following is the second equality in the statement of Theorem~\ref{thm:main} from the introduction.
\begin{Theorem} \label{thm:main2}
For any Legendrian knot $K \subset J^1\R$, the $n$-colored Kauffman polynomial has $F_{n,K} \in \mathbb{F}^-$ and satisfies
\[
F_{n,K}|_{a^{-1}=0} = R^1_{n,K}.
\]
\end{Theorem}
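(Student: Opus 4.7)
The plan is to derive Theorem \ref{thm:main2} directly by chasing the element $\tfrac{1}{c_n} L_n \in \operatorname{BMW}_n^{\leg}$ through the commutative diagram of Proposition \ref{prop:algebra}, combining Theorem \ref{thm:main1} with the two parts of Proposition \ref{prop:BMW}. In this strategy the whole theorem is essentially a formal consequence of results already set up, provided Proposition \ref{prop:BMW}(2) is in hand.

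First I would dispatch the claim $F_{n,K} \in \mathbb{F}^-$. By Proposition \ref{prop:BMW}(1) the symmetrizer $\mathcal{Y}_n$ lies in $\operatorname{BMW}_n^-$, i.e., it is an $\mathbb{F}^-$-linear combination of framings of Legendrian $n$-tangles. As noted just before Proposition \ref{prop:algebra}, the Kauffman estimate~(\ref{eq:Estimate}) ensures that the map $\operatorname{BMW}_n \to \mathbb{F}$, $L \mapsto F_{S(K,L)}$, restricts to $\operatorname{BMW}_n^- \to \mathbb{F}^-$, so $F_{n,K} = F_{S(K,\mathcal{Y}_n)}\big|_{z=s-s^{-1}}$ belongs to $\mathbb{F}^-$.

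For the specialization identity I would apply the two routes of the diagram of Proposition \ref{prop:algebra} to $x := \tfrac{1}{c_n} L_n \in \operatorname{BMW}_n^{\leg}$. The diagonal labelled $R^1_{S(K,\cdot)}$ yields, by linearity~(\ref{eq:BMWsatellite}) and Theorem~\ref{thm:main1},
\[
R^1_{S(K,x)} \;=\; \tfrac{1}{c_n}\, R^1_{S(K,L_n)} \;=\; R^1_{n,K}.
\]
The alternate route first sends $x$ to $\varphi(x) \in \operatorname{BMW}_n^\infty$, which by Proposition~\ref{prop:BMW}(2) equals $\mathcal{Y}_n\big|_{a^{-1}=0}$; the induced map $\operatorname{BMW}_n^\infty \to \mathbb{Z}(s)$ then sends this to $F_{S(K,\mathcal{Y}_n)}(a,s)\big|_{a^{-1}=0} = F_{n,K}\big|_{a^{-1}=0}$. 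Commutativity of the diagram equates the two outputs, giving $R^1_{n,K} = F_{n,K}\big|_{a^{-1}=0}$ as required.

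The genuine obstacle in this approach is not the argument above but rather Proposition \ref{prop:BMW}(2), which the excerpt defers to Section~\ref{sec:prop}. It asserts that $\tfrac{1}{c_n}L_n$ and the Heckenberger--Sch\"uler quasi-idempotent $Y_n/c_n$ become equal in $\operatorname{BMW}_n^\infty$. To prove it I would induct on $n$ and compare the two recursions $L_n = L_{n-1}\beta_n$ with $\beta_n = \bigl(1 - z\sum_{k=2}^n \alpha_{k,n}\bigr)\gamma_n$ versus~(\ref{inductiveform}); the key point is that the turn-back generators $D_{i,j}$, which are responsible for the less explicit summand in (\ref{inductiveform}), each force a zig-zag in any associated Legendrian representative and therefore contribute a factor of $a^{-1}$, while the scalar $z/(1-s^{2n-3}a)$ expands as a formal series in $a$ of degree $\leq 0$. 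Combined with $\operatorname{BMW}_n^-$-level identities coming from (\ref{eq:fish2})--(\ref{eq:double}), these considerations should collapse the $D_{i,j}$-sum modulo $\mathfrak{m}\cdot\operatorname{BMW}_n^-$ onto the $\alpha_{k,n}$-contribution in $\beta_n$, completing the induction. Once Proposition~\ref{prop:BMW}(2) is established, Theorem~\ref{thm:main2} follows from the two-route argument above.
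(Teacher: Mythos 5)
Your deduction of Theorem~\ref{thm:main2} itself is exactly the paper's: $F_{n,K}\in\mathbb{F}^-$ follows from Proposition~\ref{prop:BMW}(1) and the estimate~(\ref{eq:Estimate}), and the identity $F_{n,K}|_{a^{-1}=0}=R^1_{n,K}$ follows by pushing $\frac{1}{c_n}L_n$ both ways around the commutative diagram of Proposition~\ref{prop:algebra}, using Proposition~\ref{prop:BMW}(2) on one route and Theorem~\ref{thm:main1} on the other. Granting Proposition~\ref{prop:BMW}(2), this is correct and is the same argument as in the paper.

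A caution about the sketch you offer for Proposition~\ref{prop:BMW}(2), which you rightly identify as the real content: the turn-back tangles $D_{i,j}$ do \emph{not} carry a hidden factor of $a^{-1}$, and the corresponding summand of~(\ref{inductiveform}) does \emph{not} die modulo $\mathfrak{m}\cdot\operatorname{BMW}^-_n$. The $D_{i,j}$ are genuine Legendrian tangles (hooks, i.e.\ cusped turn-backs, not zig-zags/stabilizations), and the explicit factor of $a$ already written in the coefficient of~(\ref{inductiveform}) is there precisely to compensate for the Legendrian normalization of these hooks; as a result each $D$-term of $Y_n$ has $a$-degree zero and survives the specialization with the nonzero coefficient $-z s^{3-2n}\cdot(\cdots)$. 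Taken literally, your degree heuristic would give $L_n=L_{n-1}\gamma_n$ in $\operatorname{BMW}^\infty_n$, which already fails for $n=2$, where $L_2=1+s\sigma_1-ze_1$ retains the hook term (matching $Y_2|_{a^{-1}=0}$ only because the $D_{1,2}=e_1$ term survives). In the paper, Proposition~\ref{prop:BMW}(2) is instead proved by the simultaneous induction of Proposition~\ref{prop:almostmainthm}, which also establishes the crossing absorbing property~(\ref{eats}) for $L_{n-1}$ --- needed in Lemma~\ref{lemma:beta} to convert $L_{n-1}\beta_n$ into an expression in the $D_{i,j}$ --- followed by Lemma~\ref{lemma:mult} and a lengthy resummation matching this against $Y_n|_{a^{-1}=0}$. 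So while your proof of the theorem is fine as a citation-level argument, the proposed route to Proposition~\ref{prop:BMW}(2) would not go through as described.
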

\begin{proof}Since $\mathcal{Y}_n \in \operatorname{BMW}^-_n$ we get that $F_{n,K} = F_{S(K,\mathcal{Y}_n)} \in \mathbb{F}^-$, and since $\varphi\big(\frac{1}{c_n}L_n\big) = [\mathcal{Y}_n]$ the commutativity of the diagram in Proposition~\ref{prop:algebra} together with Proposition~\ref{thm:main1}
 shows that
\begin{gather*}
F_{n,K}|_{a^{-1}=0} = F_{S(K,\mathcal{Y}_n)}|_{a^{-1}=0} = R^1_{S(K, \frac{1}{c_n} L_n)} = R^1_{n,K}.\tag*{\qed}
\end{gather*}\renewcommand{\qed}{}
\end{proof}

\begin{Corollary}For each $n\geq2$, $\varphi\big(\frac{1}{c_n}L_n\big)$ is a central idempotent in $\operatorname{BMW}_n^\infty$.
\end{Corollary}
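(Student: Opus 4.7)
Plan. The idea is to reduce the corollary to a structural property of the BMW symmetrizer. By Proposition~\ref{prop:BMW}(2) we have $\varphi\bigl(\tfrac{1}{c_n}L_n\bigr)=[\mathcal{Y}_n]$ in $\operatorname{BMW}_n^\infty$, where $[\cdot]$ denotes the image of an element of $\operatorname{BMW}_n^-$ under the quotient $\operatorname{BMW}_n^-\to\operatorname{BMW}_n^\infty$. Since $\mathfrak{m}$ is an ideal of the coefficient ring $\mathbb{F}^-$, the submodule $\mathfrak{m}\cdot\operatorname{BMW}_n^-$ is a two-sided ideal of $\operatorname{BMW}_n^-$ (scalars commute with every tangle), so this quotient is a ring homomorphism; consequently any algebraic relation satisfied by $\mathcal{Y}_n$ in $\operatorname{BMW}_n$ (equivalently, in $\operatorname{BMW}_n^-$, by Proposition~\ref{prop:BMW}(1)) descends to $[\mathcal{Y}_n]$ in $\operatorname{BMW}_n^\infty$.

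Idempotency is then immediate from the defining relation $\mathcal{Y}_n^2=\mathcal{Y}_n$ recalled in the paper. For centrality I would invoke the classical fact that the BMW symmetrizer is itself a central element of $\operatorname{BMW}_n$. This can be read off from the Heckenberger--Sch\"uler construction in \cite{Heck}, or derived in two steps from the crossing-absorbing property (\ref{eats}): first, the quadratic relation $\sigma_i^2=1+z\sigma_i-ze_i$ combined with $\mathcal{Y}_n\sigma_i=s\mathcal{Y}_n$ forces $\mathcal{Y}_ne_i=0$; second, the horizontal-flip anti-automorphism of $\operatorname{BMW}_n$ (which fixes each $\sigma_i$ and $e_i$) together with the uniqueness of the right-absorbing nonzero idempotent yields the left-absorbing analogues $\sigma_i\mathcal{Y}_n=s\mathcal{Y}_n$ and $e_i\mathcal{Y}_n=0$. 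Since the $\sigma_i$'s and $e_i$'s generate $\operatorname{BMW}_n$ as an $\mathbb{F}$-algebra, this makes $\mathcal{Y}_n$ central, and centrality is preserved under the ring homomorphism to $\operatorname{BMW}_n^\infty$.

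The main---and essentially only---obstacle is the left-right symmetry step used to obtain $\sigma_i\mathcal{Y}_n=s\mathcal{Y}_n$: the uniqueness as recalled in the paper is phrased via the right-absorbing property, so one must verify that the horizontal-flip anti-automorphism transports this uniqueness faithfully, i.e., that the nonzero idempotent characterized by left-absorption of the $\sigma_i$ coincides with $\mathcal{Y}_n$. Citing \cite{Heck} directly sidesteps this, since the symmetrizer constructed there is manifestly a central primitive idempotent; once centrality of $\mathcal{Y}_n$ in $\operatorname{BMW}_n$ is accepted, the corollary is a purely formal consequence of the ring structure on $\operatorname{BMW}_n^\infty$.
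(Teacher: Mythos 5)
Your proposal is correct and follows essentially the same route as the paper: the paper's proof simply invokes Proposition~\ref{prop:BMW} together with the known fact that $\mathcal{Y}_n$ is already a central idempotent in $\operatorname{BMW}_n$ (citing~\cite{BlBe, Heck}), and lets this descend through the quotient $\operatorname{BMW}_n^- \rightarrow \operatorname{BMW}_n^\infty$ exactly as you argue. Your extra sketch of deriving centrality from the crossing-absorbing property is optional, and your fallback of citing~\cite{Heck} matches the paper's treatment.
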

\begin{proof}This follows from Proposition~\ref{prop:BMW} since $\mathcal{Y}_n$ has this property already in $\operatorname{BMW}_n$. (See, e.g.,~\cite{BlBe, Heck}.)
\end{proof}

\subsection{Establishing (2) of Proposition \ref{prop:BMW}} \label{sec:prop}

We now embark on showing $\varphi\big(\frac{1}{c_n}L_n\big)=\mathcal{Y}_n\vert_{a^{-1}=0}$. Throughout this section we will work in $\operatorname{BMW}^{\infty}_n$, but we will simplify notation by writing $L_n$ for $\varphi(L_n)$. We begin with some preparatory lemmas that provide formulas for $L_n$ that are closer to the inductive formula for $Y_n$ from (\ref{inductiveform}). Lemma~\ref{lemma:beta} uses the hypothesis that $L_{n-1}$ has the crossing absorbing property. This assumption is later verified to be true (see Proposition~\ref{prop:almostmainthm}).

\begin{Lemma}\label{lemma:beta}Let $n\geq2$ and assume that, in $\operatorname{BMW}^\infty_n$, $L_{n-1}$ has the crossing absorbing pro\-per\-ty~\eqref{eats}. Then, in $\operatorname{BMW}^\infty_n$ we have
 \begin{equation*}
 {L_{n-1}\beta_n}=L_{n-1}\left(1-z\sum_{k=2}^n s^{2-k}\left[\sum_{j=1}^{k-2}-zs^{2+j-k} D_{n-j,n}+ D_{n-k+1,n}\right]\right)\gamma_n.
\end{equation*}
\end{Lemma}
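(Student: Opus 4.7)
The plan is to prove, for each $k$ with $2\leq k\leq n$, the intermediate identity
\[
L_{n-1}\alpha_{k,n}=L_{n-1}\,s^{2-k}\left[-z\sum_{j=1}^{k-2}s^{2+j-k}D_{n-j,n}+D_{n-k+1,n}\right]
\]
by induction on $k$; summing over $k$ and right-multiplying by $\gamma_n$ then yields the stated form of $L_{n-1}\beta_n$ from the definition $\beta_n=(1-z\sum_{k=2}^n\alpha_{k,n})\gamma_n$. The base case $k=2$ is immediate, since $\alpha_{2,n}=e_{n-1}=D_{n-1,n}$ and the inner sum is empty.

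For the inductive step I would first rewrite the four-term recursion defining $\alpha_{k,n}$ in its factored form $\alpha_{k,n}=(\sigma_{n-k+1}-z)\,\alpha_{k-1,n}\,(\sigma_{n-k+1}-z)$. When $k\geq 3$ the index satisfies $n-k+1\leq n-2$, so the crossing absorbing hypothesis on $L_{n-1}$ gives $L_{n-1}(\sigma_{n-k+1}-z)=(s-z)L_{n-1}=s^{-1}L_{n-1}$, using $z=s-s^{-1}$. After substituting the inductive hypothesis for $L_{n-1}\alpha_{k-1,n}$, the calculation reduces to evaluating each $L_{n-1}\,D_{n-j,n}(\sigma_{n-k+1}-z)$ for $1\leq j\leq k-2$.

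Two cases arise. For $1\leq j\leq k-3$ the hook $D_{n-j,n}$ acts only on strands $n-j,\ldots,n$ with $n-j\geq n-k+3$, so it commutes with $\sigma_{n-k+1}$ and a second application of absorption again contributes a factor $(s-z)=s^{-1}$ on each such summand. For the remaining case $j=k-2$, the crucial diagrammatic identity is
\[
D_{n-k+2,n}\,\sigma_{n-k+1}=D_{n-k+1,n},
\]
which simply expresses that pre-composing $D_{i,n}=\sigma_i\sigma_{i+1}\cdots\sigma_{n-2}\,e_{n-1}$ with $\sigma_{i-1}$ extends the starting position of the hook by one. The stray $-zD_{n-k+2,n}$ produced in this case then recombines with the inductive summation as the missing $j=k-2$ term, since $s^{2+(k-2)-k}=1$, producing the desired closed form.

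The principal obstacle will be the combinatorial bookkeeping in the inductive step: establishing the tangle identity $D_{n-k+2,n}\sigma_{n-k+1}=D_{n-k+1,n}$ from the defining picture of $D_{i,j}$, verifying that $D_{n-j,n}$ commutes with $\sigma_{n-k+1}$ whenever $n-j\geq n-k+3$, and checking that the exponents in the two sources of $D_{n-k+2,n}$ align so that the isolated $j=k-2$ contribution absorbs cleanly into the general summation.
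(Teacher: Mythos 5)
Your proposal is correct, and it proves exactly the intermediate identity that the paper also needs, namely $L_{n-1}\alpha_{k,n}=L_{n-1}s^{2-k}\bigl[-z\sum_{j=1}^{k-2}s^{2+j-k}D_{n-j,n}+D_{n-k+1,n}\bigr]$, but by a genuinely different route. The paper does not induct on $k$ at this stage: it invokes Lemma~\ref{lemma:alpha} to expand $\alpha_{k,n}$ as the signed sum of resolutions $\sum_{X\subset\operatorname{cr}(C_{k,n})}(-z)^{|X|}r_X(C_{k,n})$, absorbs the unresolved right-half crossings of the clasp $C_{k,n}$ into $L_{n-1}$ and re-sums binomially to produce the factor $(s-z)^{k-2}=s^{2-k}$, and then partitions the left-half resolution subsets by their lowest resolved crossing and re-sums again to produce the terms $-zs^{2+j-k}D_{n-j,n}$. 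You instead work straight from the defining recursion, rewritten as $\alpha_{k,n}=(\sigma_{n-k+1}-z)\alpha_{k-1,n}(\sigma_{n-k+1}-z)$, absorbing one factor via $L_{n-1}(\sigma_{n-k+1}-z)=s^{-1}L_{n-1}$ (legitimate for $k\geq 3$ since $\sigma_{n-k+1}$ then lies among the top $n-1$ strands) and handling the other factor by the disjoint-support commutation for $j\leq k-3$ together with the tangle identity $D_{n-k+2,n}\sigma_{n-k+1}=D_{n-k+1,n}$; the exponent check $s^{2+(k-2)-k}=1$ that lets the stray $-zD_{n-k+2,n}$ become the $j=k-2$ term is exactly right. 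Both arguments hinge on the crossing-absorbing hypothesis, but yours bypasses Lemma~\ref{lemma:alpha} and the inclusion--exclusion/binomial bookkeeping entirely, at the modest cost of verifying the two small diagrammatic facts, which do hold: $D_{i,n}$ is the descending staircase from strand $i$ feeding into the hook $e_{n-1}$, so prepending a crossing extends the staircase. One caution on conventions: with the paper's product ($\alpha\cdot\beta$ places $\beta$ to the \emph{left} of $\alpha$), your word $\sigma_i\sigma_{i+1}\cdots\sigma_{n-2}e_{n-1}$ should be written $e_{n-1}\sigma_{n-2}\cdots\sigma_i$, and it is precisely this convention that makes right multiplication by $\sigma_{n-k+1}$ attach the new crossing at the diagrammatic left of $D_{n-k+2,n}$, which is what your identity requires; the geometric content of your argument is consistent with this, so the issue is purely notational.
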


\begin{proof}We use Lemma \ref{lemma:alpha}. For each $2\leq k\leq n$, let $\operatorname{cr}_\ell(C_{k,n})$ (respectively $\operatorname{cr}_r(C_{k,n})$) denote the set of crossings appearing in the left (resp.\ right) half of $C_{k,n}$, and abbreviate $C_k:=C_{k,n}$. Then,
\begin{align*}
 {L_{n-1}\beta_n}&=L_{n-1}\left(1-z\sum_{k=2}^n \sum_{X\subset \operatorname{cr}(C_k)}(-z)^{|X|}r_X(C_k)\right)\gamma_n\\
 &=L_{n-1}\left(1-z\sum_{k=2}^n \sum_{X\subset \operatorname{cr}_\ell(C_k)}\sum_{Y\subset \operatorname{cr}_r(C_k)}(-z)^{|X|+|Y|}r_{X\bigsqcup Y}(C_k)\right)\gamma_n\\
 &=L_{n-1}\left(1-z\sum_{k=2}^n \sum_{X\subset \operatorname{cr}_\ell(C_k)}(-z)^{|X|}\sum_{Y\subset \operatorname{cr}_r(C_k)}(-z)^{|Y|}r_{X\bigsqcup Y}(C_k)\right)\gamma_n \, .
\end{align*}
Since $L_{n-1}$ absorbs crossings $L_{n-1}r_{X \bigsqcup Y}(C_k)=L_{n-1}s^{k-2-|Y|}r_X(D_{n-k+1,n})$; see Fig.~\ref{fig:Absorbing}. Furthermore, because there are $\binom{k-2}{|Y|}$ subsets of $\operatorname{cr}_r(C_k)$ having $|Y|$ crossings, summing over $|Y|$ one obtains
\begin{gather*}
 L_{n-1}\sum_{Y\subset \operatorname{cr}_r(C_k)}(-z)^{|Y|}r_{X\bigsqcup Y}(C_k) =L_{n-1}\sum_{j=0}^{k-2}\binom{k-2}{j}(-z)^j s^{k-2-j}r_X(D_{n-k+1,n})\\
\qquad{} =L_{n-1}(s-z)^{k-2}r_X(D_{n-k+1,n})
=L_{n-1}s^{2-k}r_X(D_{n-k+1,n}).
\end{gather*}
Therefore, \[L_{n-1}\beta_n=L_{n-1}\left(1-z\sum_{k=2}^ns^{2-k}\sum_{X\subset \operatorname{cr}_\ell(C_k)}(-z)^{|X|}r_X(D_{n-k+1,n})\right)\gamma_n.\]

\begin{figure}
 \centering
		\labellist
 \small
\pinlabel $n$ [r] at 0 0
\pinlabel $n-k+1$ [r] at 0 112
\pinlabel $L_{n-1}$ [b] at 168 56
\pinlabel $L_{n-1}$ [b] at 448 56
\pinlabel $=\,s^{k-2-|Y|}$ [b] at 236 48
\endlabellist

 \quad \includegraphics[scale=.8]{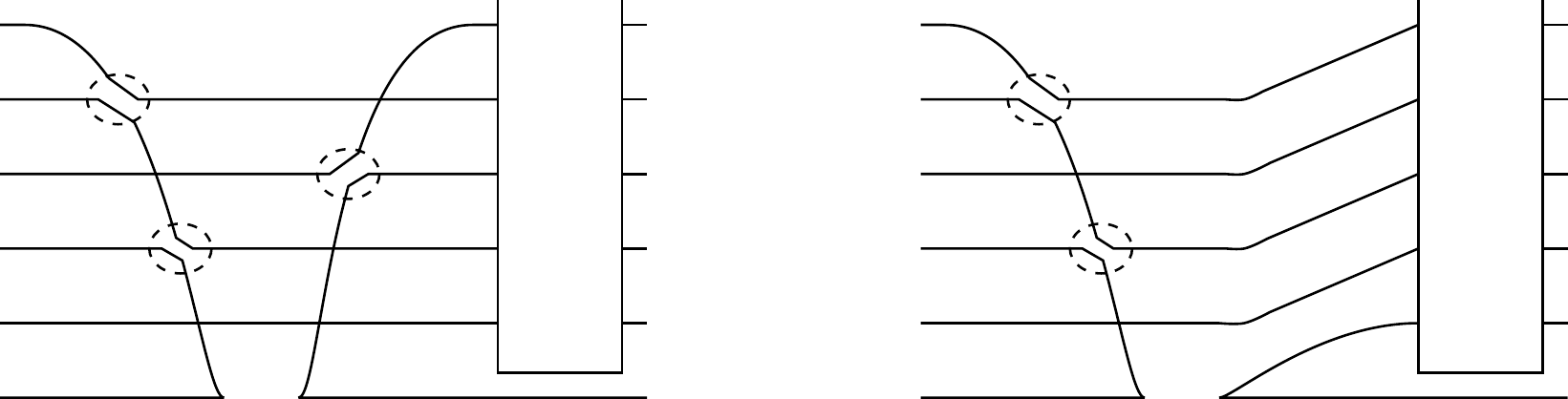}
		
 \caption{An illustration of the identity $L_{n-1}r_{X \bigsqcup Y}(C_k)=s^{k-2-|Y|}L_{n-1}r_X(D_{n-k+1,n})$, with the resolved crossings from $X \bigsqcup Y$ indicated in dotted ovals. The number of crossings in the right half of $r_{X\sqcup Y}(C_{k,n})$ is $k-2-|Y|$.} \label{fig:Absorbing}
\end{figure}

It remains to establish the innermost sum satisfies
\begin{gather} \label{eq:Xsum}
L_{n-1}\sum_{X\subset \operatorname{cr}_\ell(C_k)}(-z)^{|X|}r_X(D_{n-k+1,n}) = L_{n-1}\left[\sum_{j=1}^{k-2}-zs^{2+j-k} D_{n-j,n}+ D_{n-k+1,n}\right].
\end{gather}
When we perform the resolution by subsets of $\operatorname{cr}_\ell(C_k)$ we will not be able to feed all of the remaining crossings into $L_{n-1}$ (in fact, when $X$ is the empty set there are no crossings that we can push into $L_{n-1}$). We remedy this by partitioning the nonempty subsets of $\operatorname{cr}_\ell(C_k)$. Label the crossings in $\operatorname{cr}_\ell(C_k)$ by ascending $z$-coordinate as $c_1, \ldots, c_{k-2}$, and define $\chi_j=\{X\subset \operatorname{cr}_\ell(C_k)\colon \allowbreak j=\min\{i\,|\,c_i \in X\}\}$, for $1\leq j\leq k-2$. Let $X\in\chi_j$ be given. By definition $c_j\in X$ is the lowest crossing that is resolved in $r_X(D_{n-k+1,n})$. Isotopy allows us to push the $k-2-j-(|X|-1)$ remaining crossings lying above $c_j$ into $L_{n-1}$. Hence, using $i=|X|-1$ and that the requirement that $X \in \chi_j$ leaves $i$ choices from the $k-2-j$ crossings above $c_j$ to determine $X$, we have
 \begin{align*}
 L_{n-1}\sum_{X\in\chi_j}(-z)^{|X|}r_X(D_{n-k+1,n})&=L_{n-1}\sum_{i=0}^{k-2-j}\left(\begin{array}{c} k-2-j \\ i \end{array} \right)(-z)^{i+1}s^{(k-2-j)-i}D_{n-j,n} \\
		& = -zL_{n-1}(-z+s)^{k-2-j} D_{n-j,n} \quad \mbox{(binomial theorem)} \\
		 &=-zL_{n-1}s^{2+j-k}D_{n-j,n}.
 \end{align*}
Summing over all $j$, and adding the term $D_{n-k+1,n}$ for the case when $X$ is empty, establishes~(\ref{eq:Xsum}) and completes the proof.
\end{proof}
\begin{Lemma}\label{lemma:mult}For any $1\leq i\leq n$, in $\operatorname{BMW}^\infty_n$ we have
$D_{i,n}\gamma_n=\sum\limits_{r=0}^{n-i-1}s^rD_{i,n-r}$.
\end{Lemma}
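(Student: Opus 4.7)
The plan is to expand $D_{i,n}\gamma_n$ directly using the explicit formula for $\gamma_n$ and then analyze each resulting term separately. Using
\[
\gamma_n = 1 + \sum_{j=1}^{n-1} s^j\sigma_{n-1}\sigma_{n-2}\cdots\sigma_{n-j},
\]
one obtains
\[
D_{i,n}\gamma_n = D_{i,n} + \sum_{j=1}^{n-1} s^j\, D_{i,n}\sigma_{n-1}\sigma_{n-2}\cdots\sigma_{n-j}.
\]
Matching coefficients of $s^j$ with those of $s^r$ on the right-hand side, it suffices to establish in $\operatorname{BMW}^\infty_n$ the following two sub-claims: first, $D_{i,n}\sigma_{n-1}\sigma_{n-2}\cdots\sigma_{n-j} = D_{i, n-j}$ for $1 \leq j \leq n-i-1$; and second, $D_{i,n}\sigma_{n-1}\sigma_{n-2}\cdots\sigma_{n-j} = 0$ for $n-i \leq j \leq n-1$.

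I would prove the first sub-claim by induction on $j$. The essential step is the base case identity $D_{i,k}\sigma_{k-1} = D_{i,k-1}$ when $i<k-1$. Diagrammatically, multiplying $D_{i,k}$ on the right by $\sigma_{k-1}$ places the crossing adjacent to the cap of $D_{i,k}$ at its lower endpoint (position $k$); one then isotopes that endpoint past the crossing to position $k-1$, producing the hook $D_{i,k-1}$. Any framing contribution is a power of $a$, and one must verify in the conventions used for $D_{i,j}$ that the contribution is exactly $a^0 = 1$, so that the identity holds on the nose (not merely modulo $\mathfrak{m}$). Iteration then gives $D_{i,n}\sigma_{n-1}\cdots\sigma_{n-j} = D_{i,n-j}$ throughout the range $1\leq j\leq n-i-1$.

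For the vanishing sub-claim, the critical case is $j=n-i$: by the iterated identity, $D_{i,n}\sigma_{n-1}\cdots\sigma_{i+1}=D_{i,i+1}$, and $D_{i,i+1}$ is the Legendrian hook $e_i$ between adjacent strands. The classical BMW identity $e_i\sigma_i = a^{-1} e_i$ together with $a^{-1}\in\mathfrak{m}$ then forces $D_{i,i+1}\sigma_i = 0$ in $\operatorname{BMW}^\infty_n$. For $j>n-i$ the product contains this zero factor, and so also vanishes. Combining both sub-claims yields
\[
D_{i,n}\gamma_n = D_{i,n} + \sum_{j=1}^{n-i-1}s^j D_{i,n-j} = \sum_{r=0}^{n-i-1} s^r D_{i,n-r},
\]
as required.

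The main obstacle I expect is the rigorous verification of the base case $D_{i,k}\sigma_{k-1}=D_{i,k-1}$: one must check that sliding the cap past the crossing contributes neither an extra $a^{-1}$ factor (which would collapse every $D_{i,n-r}$ with $r\geq 1$ in the final sum to zero modulo $\mathfrak{m}$) nor a positive power of $a$ (which would exit $\operatorname{BMW}^-_n$ altogether). This amounts to careful bookkeeping of crossing signs in the diagrammatic isotopy, and it is where I would focus the most attention.
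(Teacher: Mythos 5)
Your proposal is correct and is essentially the paper's argument: expand $\gamma_n$, use type~II Reidemeister moves to get $D_{i,n}\sigma_{n-1}\cdots\sigma_{n-r}=D_{i,n-r}$ for $r<n-i$ (no framing factor arises since R2 moves create no kinks), and kill the terms with $r\geq n-i$. For the vanishing step the paper simply quotes the fishtail relation, which holds in $\operatorname{BMW}^\infty_n$ via the homomorphism $\varphi$ of Proposition~\ref{prop:algebra}; your route through $e_i\sigma_i=a^{-1}e_i$ with $a^{-1}\in\mathfrak{m}$ is the framed-tangle form of the same fact, so the sign bookkeeping you flag can be bypassed by citing that relation.
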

\begin{proof}
This is just applying type II Reidemeister moves to see that
\[
D_{i,n}s^r\sigma_{n-1}\sigma_{n-2}\cdots\sigma_{n-r} =  \begin{cases} s^r D_{i,n-r}, &   r < n-i, \\ 0, & r \geq n-i,\end{cases}
\]
 since the fishtail relation (\ref{eq:fish}) may be applied when $r \geq n-i$. See Fig.~\ref{fig:DIJprod}.
\end{proof}

\begin{figure}[h]
 \centering
 \raisebox{-1.25cm}{\includegraphics[scale=.9]{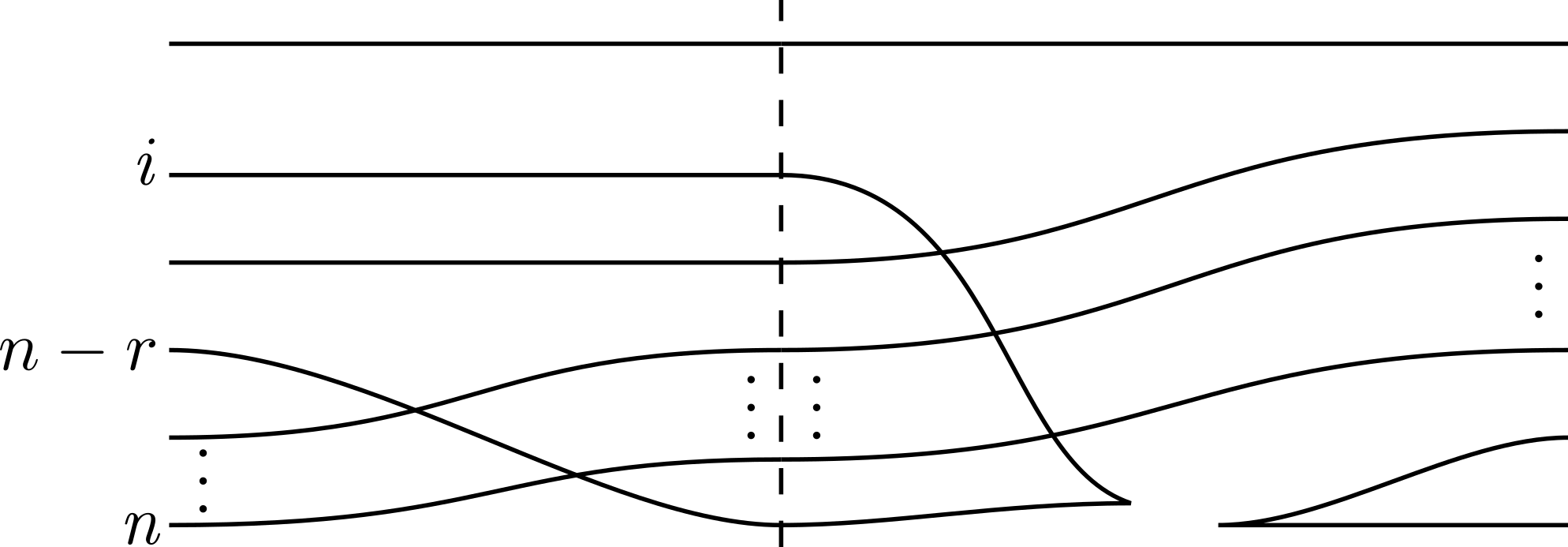}}\quad=\quad\raisebox{-1.25cm}{\includegraphics{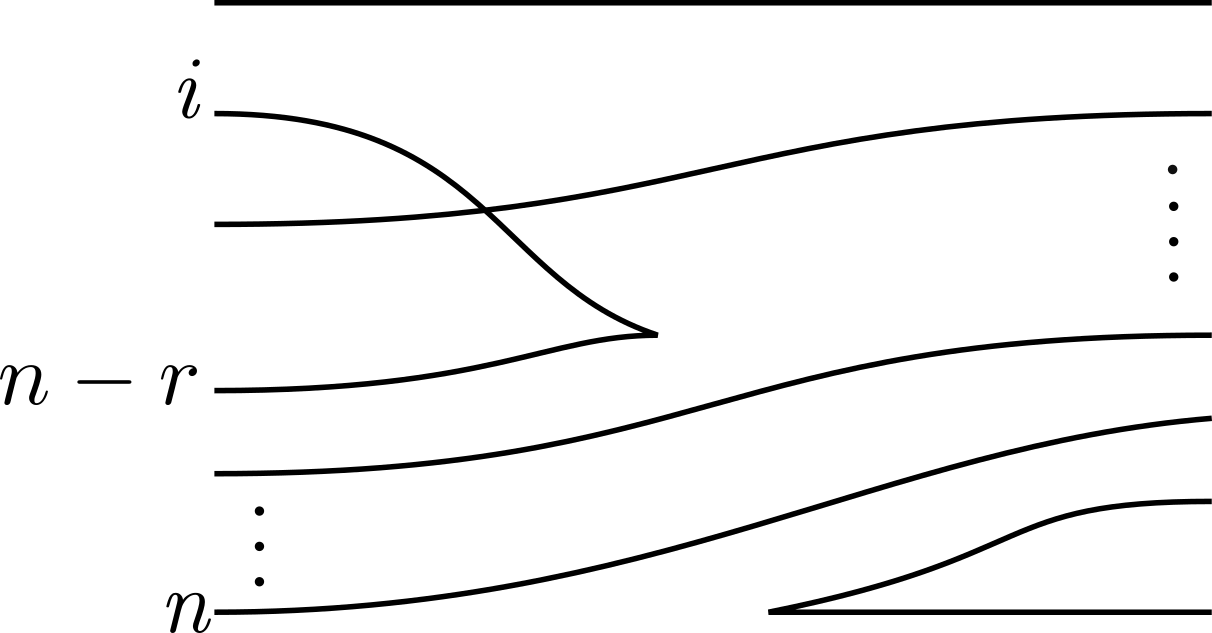}}

 \caption{An Illustration of the Type II Reidemeister moves used in Lemma \ref{lemma:mult}.}
		 \label{fig:DIJprod}
\end{figure}
Proposition \ref{prop:BMW} (2) follows from the following.
\begin{Proposition} \label{prop:almostmainthm}
For all $n\geq1$, $L_n$ has the following properties in $\operatorname{BMW}^\infty_n$:
\begin{itemize}\itemsep=0pt
 \item[$(i)$] $L_n=Y_n\vert_{a^{-1}=0}$,
 \item[$(ii)$] $L_n$ has the crossing absorbing property~\eqref{eats}.
\end{itemize}
\end{Proposition}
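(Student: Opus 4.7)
The plan is to prove (i) and (ii) simultaneously by induction on $n$, with the trivial base case $n=1$ ($L_1 = 1 = Y_1|_{a^{-1}=0}$, and (ii) is vacuous). For the inductive step, assume (i) and (ii) hold for $L_{n-1}$.

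To establish (i) for $L_n$, first apply Lemma \ref{lemma:beta}, whose hypothesis is exactly the inductive form of (ii) for $L_{n-1}$, to write $L_n = L_{n-1}\beta_n$ as $L_{n-1}\gamma_n$ plus a correction in which every nontrivial summand is of the form $L_{n-1}D_{i,n}\gamma_n$ (second index fixed at $n$). Then use Lemma \ref{lemma:mult} to expand each $D_{i,n}\gamma_n = \sum_{r=0}^{n-i-1} s^r D_{i,n-r}$, yielding an expression
\[
L_n = L_{n-1}\gamma_n + L_{n-1}\cdot\bigl(\text{explicit $\mathcal{R}$-linear combination of }D_{p,q}\text{ with }1\le p<q\le n\bigr).
\]
In parallel, specialize the inductive formula (\ref{inductiveform}) for $Y_n$ at $a^{-1}=0$ using the key computation
\[
\frac{zas^{2n-2i-1}}{1-s^{2n-3}a}\bigg|_{a^{-1}=0} = \frac{zs^{2n-2i-1}}{-s^{2n-3}} = -zs^{2-2i},
\]
obtained by comparing leading $a$-coefficients of the numerator and denominator. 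Combined with the inductive hypothesis (i) that $Y_{n-1}|_{a^{-1}=0}=L_{n-1}$, this gives
\[
Y_n|_{a^{-1}=0} = L_{n-1}\gamma_n + L_{n-1}\sum_{i=1}^{n-1}(-zs^{2-2i})\sum_{j=0}^{i-1}s^j D_{i-j,\,i+1}.
\]

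The substantive step is then a combinatorial matching: verify that the two expressions for $L_n$ and $Y_n|_{a^{-1}=0}$ agree in $\operatorname{BMW}^\infty_n$ by comparing coefficients of $L_{n-1}D_{p,q}$ for each pair $(p,q)$ with $1 \le p < q \le n$. On the $Y_n$-side, the substitution $q = i+1$, $p = i-j$ yields an explicit monomial coefficient. On the Lemma \ref{lemma:beta} side, contributions to $L_{n-1}D_{p,q}$ come from the various summation indices $(k,j,r)$ and $(k,r)$ with $q = n-r$ and $p\in\{n-j,\, n-k+1\}$; these aggregate via elementary manipulation of finite geometric sums (exploiting the factors $s^{2-k}$, $s^{2+j-k}$ from Lemma \ref{lemma:beta} and $s^r$ from Lemma \ref{lemma:mult}) to the same closed form. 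This coefficient-matching is the one step I expect to require the most technical care; the rest of the argument is formal.

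Once (i) is established, (ii) follows with no further combinatorial work. Indeed, $\mathcal{Y}_n$, and hence $Y_n=c_n\mathcal{Y}_n$, has the crossing absorbing property in $\operatorname{BMW}_n$, and $\mathfrak{m}\cdot\operatorname{BMW}^-_n$ is a two-sided ideal of $\operatorname{BMW}^-_n$, so the reduction map $\operatorname{BMW}^-_n\to\operatorname{BMW}^\infty_n$ is multiplicative. Therefore
\[
L_n\sigma_i \;=\; Y_n|_{a^{-1}=0}\cdot\sigma_i \;=\; (Y_n\sigma_i)|_{a^{-1}=0} \;=\; sY_n|_{a^{-1}=0} \;=\; sL_n
\]
in $\operatorname{BMW}^\infty_n$ for every $1\le i<n$, completing the inductive step.
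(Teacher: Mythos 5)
Your proposal is correct and follows essentially the same route as the paper: induction on $n$, using Lemma~\ref{lemma:beta} (whose hypothesis is the inductive crossing-absorbing property) together with Lemma~\ref{lemma:mult} to rewrite $L_{n-1}\beta_n$ in terms of $L_{n-1}D_{p,q}$, specializing the Heckenberger--Sch\"uler formula~\eqref{inductiveform} via $\frac{zas^{2n-2i-1}}{1-s^{2n-3}a}\big|_{a^{-1}=0}=-zs^{2-2i}$, and then deducing (ii) from (i) since $Y_n$ absorbs crossings and the reduction $\operatorname{BMW}^-_n\to\operatorname{BMW}^\infty_n$ is multiplicative. The one step you defer --- matching the coefficients of the $L_{n-1}D_{p,q}$ against the specialized formula for $Y_n$ --- is exactly the long reindexing/geometric-sum computation the paper carries out explicitly, and it does close as you predict.
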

\begin{proof} The proof is by induction on $n$. In the base case $n=1$, (i) is immediate from definitions and (ii) is vacuous. Assume the result for $n-1$. Note that it suffices to establish (i) since the fact that $Y_n$ has the crossing absorbing property and $\vert_{a^{-1}=0}$ is a $\mathbb{Z}(s)$-algebra homomorphism would then allow us to verify (ii) via
\[
L_n\sigma_i=(Y_n\vert_{a^{-1}=0})\sigma_i=(Y_n\sigma_i)\vert_{a^{-1}=0}=(sY_n)\vert_{a^{-1}=0}=sL_n.
\]
 Showing $L_n=Y_n\vert_{a^{-1}=0}$ is the following a computation (with Lemmas \ref{lemma:beta} and \ref{lemma:mult} used at the 2nd and 3rd equality):
\begin{align*}
 L_n&=L_{n-1}\beta_n =L_{n-1}\left(1-z\sum_{k=2}^n s^{2-k}\left[\sum_{j=1}^{k-2}-zs^{2+j-k} D_{n-j,n}+ D_{n-k+1,n}\right]\gamma_n\right)\\
 &=L_{n-1}\left(\gamma_n-z\sum_{k=2}^n s^{2-k}\left[\sum_{j=1}^{k-2}-zs^{2+j-k}\sum_{r=0}^{j-1}s^rD_{n-j,n-r} + \sum_{r=0}^{k-2}s^rD_{n-k+1,n-r}\right]\right)\\
 &=L_{n-1}\left(\gamma_n-z\sum_{k=2}^n s^{2-k}\left[\sum_{r=0}^{k-3}\sum_{j=r+1}^{k-2}-zs^{2+j-k+r}D_{n-j,n-r} + \sum_{r=0}^{k-2}s^rD_{n-k+1,n-r}\right]\right)\\
 &=L_{n-1}\left(\gamma_n-z\left[\sum_{k=2}^n \sum_{r=0}^{k-3}\sum_{j=r+1}^{k-2}-zs^{4+j-2k+r}D_{n-j,n-r} + \sum_{k=2}^n\sum_{r=0}^{k-2}s^{2-k+r}D_{n-k+1,n-r}\right]\right)\\
	 &=L_{n-1}\left(\gamma_n-z\left[\sum_{r=0}^{n-3} \sum_{k=r+3}^{n}\sum_{j=r+1}^{k-2}-zs^{4+j-2k+r}D_{n-j,n-r} \right.\right.\\
& \left.\left. \quad{} + \sum_{r=0}^{n-2}\sum_{k=r+2}^{n}s^{2-k+r}D_{n-k+1,n-r}\right]\right)\\
 &=L_{n-1}\left(\gamma_n-zD_{1,2}-z\sum_{r=0}^{n-3}s^r\left[ \sum_{k=r+3}^{n}\sum_{j=r+1}^{k-2}-zs^{4+j-2k}D_{n-j,n-r}\right.\right.\\
& \left.\left. \quad{} +\sum_{j=r+1}^{n-1}s^{2-j-1}D_{n-j,n-r}\right]\right)\\
 &=L_{n-1}\left(\gamma_n-zD_{1,2}-z\sum_{r=0}^{n-3}s^r\left[ \sum_{j=r+1}^{n-2}s^j D_{n-j,n-r}\left(\sum_{k=j+2}^{n}-zs^{4-2k}\right)\right.\right.\\
& \left.\left. \quad{}
 +\sum_{j=r+1}^{n-1}s^{2-j-1} D_{n-j,n-r}\right]\right)\\
 &=L_{n-1}\left(\gamma_n-z D_{1,2}-z\sum_{r=0}^{n-3}s^r\left[ \sum_{j=r+1}^{n-2}s^j D_{n-j,n-r}\left(z\frac{s^{4-2n}-s^{2-2j}}{s^2-1}\right)\right.\right.\\
& \left.\left. \quad{} +\sum_{j=r+1}^{n-1}s^{2-j-1} D_{n-j,n-r}\right]\right)\\
 &=L_{n-1}\left(\gamma_n-z D_{1,2}-z\sum_{r=0}^{n-3}s^r\left[ \sum_{j=r+1}^{n-2}\left(s^{j-2n+3}-s^{1-j}\right) D_{n-j,n-r}\right.\right.\\
& \left.\left. \quad{}+\sum_{j=r+1}^{n-1}s^{2-j-1} D_{n-j,n-r}\right]\right)\\
 &=L_{n-1}\left(\gamma_n-z D_{1,2}-z\sum_{r=0}^{n-3}s^r\Bigg[s^{2-n} D_{1,n-r}
  \right.\\
& \left.\left. \quad{} +\sum_{j=r+1}^{n-2}\left(s^{j-2n+3}-s^{1-j}+s^{2-j-1}\right) D_{n-j,n-r}\right]\right)\\
 &=L_{n-1}\left(\gamma_n-z D_{1,2}-z\sum_{r=0}^{n-3}s^r\left[s^{2-n} D_{1,n-r} +\sum_{j=r+1}^{n-2}\left(s^{j-2n+3}\right) D_{n-j,n-r}\right]\right)\\
 &=L_{n-1}\left(\gamma_n-z D_{1,2}-z\sum_{r=0}^{n-3}s^r\sum_{j=r+1}^{n-1}s^{j-2n+3} D_{n-j,n-r}\right)\\
 &=L_{n-1}\left(\gamma_n-\frac{z}{s^{2n-3}}\left(s^{2n-3} D_{1,2}+\sum_{r=0}^{n-3}s^r\sum_{j=r+1}^{n-1}s^{j} D_{n-j,n-r}\right)\right)\\
 &=L_{n-1}\left(\gamma_n-\frac{z}{s^{2n-3}}\left(s^{2n-3} D_{1,2}+\sum_{i=2}^{n-1}s^{n-i-1}\sum_{j=n-i}^{n-1}s^j D_{n-j,i+1}\right)\right)\\
 &=L_{n-1}\left(\gamma_n-\frac{z}{s^{2n-3}}\left(s^{2n-3} D_{1,2}+\sum_{i=2}^{n-1}s^{n-i-1}\sum_{j=0}^{i-1}s^{n-i+j} D_{i-j,i+1}\right)\right)\\
 &=L_{n-1}\left(\gamma_n-\frac{z}{s^{2n-3}}\left(s^{2n-3} D_{1,2}+\sum_{i=2}^{n-1}s^{2n-2i-1}\sum_{j=0}^{i-1}s^{j} D_{i-j,i+1}\right)\right)\\
 &=L_{n-1}\left(\gamma_n-\frac{z}{s^{2n-3}}\left(\sum_{i=1}^{n-1}s^{2n-2i-1}\sum_{j=0}^{i-1}s^{j} D_{i-j,i+1}\right)\right) \\
		& = Y_{n-1}\big\vert_{a^{-1}=0} \cdot \left(\gamma_n+\frac{z}{1-s^{2n-3}a}\left(\sum_{i=1}^{n-1}as^{2n-2i-1} \sum_{j=0}^{i-1}s^{j} D_{i-j,i+1}\right) \right)\bigg\vert_{a^{-1}=0} = Y_n\big\vert_{a^{-1}=0}.
\end{align*}
At the last equality, we used \eqref{inductiveform}.
\end{proof}

\section[The $n$-colored ruling polynomial and representation numbers]{The $\boldsymbol{n}$-colored ruling polynomial and representation numbers} \label{sec:5}

In this section, we show that the $1$-graded $n$-colored ruling polynomial agrees with the $1$-graded, total $n$-dimensional representation number defined in~\cite{LeRu}; see Theorem~\ref{thm:Rep}. After a brief review of Legendrian contact homology and relevant material from~\cite{LeRu}, the remainder of the section contains the proof of Theorem~\ref{thm:Rep}.

\subsection{Review of the Legendrian contact homology DGA} We assume familiarity with the Legendrian contact homology differential graded algebra, (abbrv. LCH DGA), aka.\ the Chekanov--Eliashberg algebra, in the setting of Legendrian links in $J^1M$ with $M = \R$ or $S^1$, and refer the reader to any of \cite{Chekanov02, EN, ENS, Ng1, NgR2012} for this background material. We continue to use coordinates $(x,y,z) \in J^1M = T^*M \times \R = M \times \R^2$, and to view projections to $S^1\times \R$ in $[0,1]\times \R$ with the left and right boundary identified.
 The Reeb vector field is $\frac{\partial}{\partial z}$, so Reeb chords of $K$ are in bijection with double points of the {\it Lagrangian projection} aka. the {\it $xy$-diagram} of $K$ which is the projection to $T^*M$.
 Representation numbers are defined in \cite{LeRu} using the fully non-commutative version of the LCH DGA associated to a Legendrian knot or link $K$ equipped with a collection of base points, $*_1, \ldots, *_\ell$, with the requirement that every component of $K$ has at least one base point. The resulting DGA, notated $(\mathcal{A}(K), \partial)$ or $(\mathcal{A}(K, *_1, \ldots, *_\ell), \partial)$ when the choice of base points should be emphasized, is an associative, non-commutative algebra with identity generated over $\Z$ by
\begin{itemize}\itemsep=0pt
\item[(i)] the Reeb chords of $K$, denoted $b_1, \ldots, b_r$, and
\item[(ii)] invertible generators $t_1^{\pm1}, \ldots, t_\ell^{\pm1}$ corresponding to the base points $*_1, \ldots, *_\ell$.
\end{itemize}
There are no relations other than $t_it_i^{-1} = t_i^{-1}t_i=1$. The differential $\partial$ vanishes on the $t_i$; for a Reeb chord, $a$, the differential $\partial a$ is defined via a signed count of rigid holomorphic disks in~$T^*M$ with boundary on the Lagrangian projection of $K$ and having a single positive boundary puncture at $a$ and an arbitrary number of negative boundary punctures. Each such a disk $u$ contributes a term $\pm w(u)$ to $\partial a$ where~$w(u)$ is the product of base point generators and negative punctures as they appear in counter-clockwise order along the boundary of the domain of~$u$, starting from the positive puncture at~$a$. Occurrences of~$t_i$ appear with exponent~$\pm1$ according to the oriented intersection number of $\partial u$ with $*_i$. See Fig.~\ref{fig:PointedDisk}. For associating $\pm1$ signs to disks, we use the conventions as in \cite{HenryRu, LeRu}. Most results of this section concern DGA representations defined over a field of characteristic~$2$, and in this case it suffices to work with the version of~$\mathcal{A}(K)$ defined over $\Z/2$ where the~$\pm1$ signs become irrelevant.

\begin{figure}
\labellist
\small
\pinlabel $a$ [l] at 186 102
\pinlabel $+$ [r] at 166 102
\pinlabel $b_1$ [b] at 136 180
\pinlabel $b_2$ [b] at 24 182
\pinlabel $b_3$ [tl] at 84 20
\pinlabel $-$ [t] at 132 152
\pinlabel $-$ [t] at 28 162
\pinlabel $-$ [b] at 86 34
\endlabellist
\centerline{\includegraphics[scale=.6]{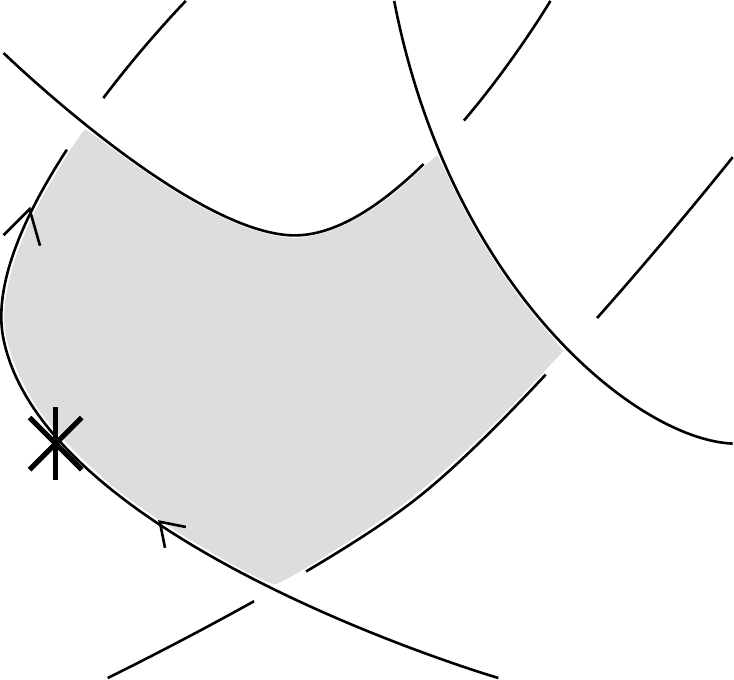}}

\caption{A holomorphic disk contributing the term $\partial a = \pm b_1 b_2 t^{-1} b_3 + \cdots$
 to the differential of $\mathcal{A}(K)$.}
\label{fig:PointedDisk}
\end{figure}

\subsection{1-graded representation numbers} In~\cite{LeRu}, Legendrian invariant $m$-graded representation numbers are defined for any non-negative integer $m\geq 0$ by considering DGA homomorphisms that are only required to preserve grading mod $m$. In the current article, we are concerned only with $1$-graded representations, which we will refer to as {\it ungraded} representations since the grading condition becomes vacuous when $m=1$. We review definitions from~\cite{LeRu} in the ungraded setting.

Let $V$ be a vector space over a field, $\mathbb{F}$, with $\operatorname{char}(\mathbb{F}) = 2$, and let $d\colon V \rightarrow V$ be an ungraded differential on~$V$, i.e., $d$ is just a linear map satisfying $d^2=0$.
 Then, $d$ induces a differential on the endomorphism algebra
\[
\delta\colon \ \operatorname{End}(V) \rightarrow \operatorname{End}(V), \qquad \delta(T) = d \circ T + T \circ d
\]
making $(\operatorname{End}(V),\delta)$ into an ungraded DGA, i.e.,
 $\delta$ satisfies $\delta^2=0$ and $\delta(T_1T_2) = \delta(T_1)T_2 + T_1\delta(T_2)$. An {\it ungraded representation} of a DGA, $(\mathcal{A}, \partial)$, on $(V,d)$ is an ungraded DGA homomorphism
\[
f\colon \ (\mathcal{A}, \partial) \rightarrow (\operatorname{End}(V), \delta),
\]
i.e., a ring homomorphism satisfying $f(1)=1$ and $f \circ \partial = \delta \circ f$.
In the $1$-dimensional case where $V = \mathbb{F}$ and $d=0$, an ungraded representation on $(\mathbb{F},0)$ is also called an {\it ungraded augmentation}.

For $\mathcal{A} = \mathcal{A}(K)$, we use the notation $\overline{\operatorname{Rep}}_1(K, (V, d))$ for the set of all ungraded representations of $(\mathcal{A},\partial)$ on $(V,d)$ and $\overline{\operatorname{Aug}}_1(K, \mathbb{F})$ for the set of augmentations to $\mathbb{F}$.
 In the case where $K$ is connected with base points $*_1, \ldots, *_\ell$ appearing in order starting at $*_1$ and following the orientation of $K$, given a subset $T \subset {\rm GL}(V)$ we will use the notation
 $\overline{\operatorname{Rep}}_1(K,(V,d),T)$ to denote the set of those $f \in \overline{\operatorname{Rep}}_1(K, (V, d))$ such that $f(t_1\cdots t_\ell) \in T$. In particular, $\overline{\operatorname{Rep}}_1(K, (V, d)) =\overline{\operatorname{Rep}}_1(K,(V,d), {\rm GL}(V))$.

\begin{Definition} \label{def:total} Let $\mathbb{F}_q$ denote the finite field of order $q$ with $q$ a power of $2$. The ($1$-graded) {\it total $n$-dimensional representation number} of $K$ is defined by
\begin{align*}
\operatorname{Rep}_1\big(K, \mathbb{F}^n_q\big) & := \big|\operatorname{End}\big(\mathbb{F}^n_q\big)\big|^{-{\rm rb}(K)/2}  |{\rm GL}(n, \mathbb{F}_q) |^{-\ell} \big| \overline{\operatorname{Rep}}_1\big(K, \big(\mathbb{F}^n_q, 0\big)\big) \big| \\
 & = \big(q^{n^2}\big)^{-{\rm rb}(K)/2}   \left(q^{n(n-1)/2}   \prod_{m=1}^n\big(q^m-1\big)\right)^{-\ell} \big| \overline{\operatorname{Rep}}_1\big(K, \big(\mathbb{F}^n_q, 0\big)\big) \big|,
\end{align*}
 where ${\rm rb}(K)$ is the number of Reeb chords of $K$ and $\ell$ is the number of basepoints.
\end{Definition}

That $\operatorname{Rep}_1\big(K, \mathbb{F}^n_q\big)$ only depends on the Legendrian isotopy type of $K$ is established in \cite[Proposition~3.10]{LeRu}. The following theorem shows that the $1$-graded $n$-colored ruling polynomial and the total $n$-dimensional representation numbers of~$K$ are equivalent Legendrian invariants.

\begin{Theorem} \label{thm:Rep}
Let $K \subset J^1 \R$ be a $($connected$)$ Legendrian knot and $\mathbb{F}_q$ a finite field of order $q$ with characteristic~$2$. Then,
\[
R^1_{n,K}(q) = \operatorname{Rep}_1(K, \mathbb{F}_q),
\]
where $R^1_{n,K}(q) = R^1_{n,K}(s)\big\vert_{s=q^{1/2}}$.
\end{Theorem}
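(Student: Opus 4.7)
The plan is to follow the strategy sketched in the introduction and paraphrased at the end of the discussion of \cite{LeRu}. Unwinding Definition \ref{def:nruling}, the equality to prove is
\[
\operatorname{Rep}_1\bigl(K,\mathbb{F}_q^n\bigr)=\frac{1}{c_n}\sum_{\beta\in S_n}s^{\lambda(\beta)}\widetilde{R}^1_{S(K,\beta)}(z)\bigg|_{s=q^{1/2}},
\]
with $z=s-s^{-1}=q^{1/2}-q^{-1/2}$. The right-hand side is, up to the normalization $c_n$, a weighted sum over positive permutation braids of \emph{reduced} augmentation counts of the satellite $S(K,\beta)$. The left-hand side, by Definition~\ref{def:total}, is a normalized count of ungraded representations of $(\mathcal{A}(K),\partial)$ on $(\mathbb{F}_q^n,0)$. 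The goal is to match these two counts by breaking both sides up according to where the invertible generator $t$ (or the product of base-point generators) lands in the Bruhat decomposition $\mathrm{GL}(n,\mathbb{F}_q)=\bigsqcup_{\beta\in S_n}B_\beta$ of the general linear group, where $|B_\beta|=q^{\lambda(\beta)}|B|$ with $B$ the upper triangular Borel subgroup.

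The first step is to recall, or lightly re-prove in the present conventions, the satellite-to-representation correspondence of \cite{LeRu}: an ungraded augmentation of $\mathcal{A}(S(K,\beta))$ is equivalent to an ungraded representation of $\mathcal{A}(K)$ on some differential vector space $(\mathbb{F}_q^n,d)$ whose distinguished invertible generator is sent into $B_\beta$, where $d$ ranges over all strictly upper triangular differentials on $\mathbb{F}_q^n$ permitted by the grading; in the ungraded $m=1$ setting $d$ need not vanish. I then need to check that under this correspondence, \emph{reduced} normal rulings of $S(K,\beta)$ match exactly those augmentations for which $d=0$. This is where Remark~\ref{rem:thin} is critical: the reduced condition can be tested at any single point of $K$ outside the $J^1[0,1]$-part of the satellite, and the ``thin part'' of a normal ruling is precisely what records the pairing data that controls whether a (pure-homological) differential $d$ is forced to vanish.

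The second step is a counting argument. Combining the Henry--Rutherford theorem from \cite{HenryRu}, which identifies $R^1_{S(K,\beta)}(q^{1/2}-q^{-1/2})$ with a normalized count of $\mathbb{F}_q$-augmentations of $\mathcal{A}(S(K,\beta))$, with the bijection above restricted to the reduced sub-locus, yields an identity of the form
\[
s^{\lambda(\beta)}\widetilde{R}^1_{S(K,\beta)}(z)\Big|_{s=q^{1/2}}
= (\text{normalization})\cdot \bigl|\overline{\operatorname{Rep}}_1\bigl(K,(\mathbb{F}_q^n,0),B_\beta\bigr)\bigr|,
\]
where the factor $s^{\lambda(\beta)}=q^{\lambda(\beta)/2}$ tracks the relative size of the Bruhat cell $B_\beta$ inside $\mathrm{GL}(n,\mathbb{F}_q)$. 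Summing over $\beta\in S_n$ reassembles the Bruhat decomposition on the right, producing the total count $|\overline{\operatorname{Rep}}_1(K,(\mathbb{F}_q^n,0))|$, and the factor $1/c_n$ is then checked against the $|\mathrm{GL}(n,\mathbb{F}_q)|^{-\ell}$ and $|\operatorname{End}(\mathbb{F}_q^n)|^{-\mathrm{rb}(K)/2}$ normalizations in Definition~\ref{def:total}, using that $c_n|_{s=q^{1/2}}$ is exactly the Bruhat-sum normalization $q^{-n(n-1)/2}|\mathrm{GL}(n,\mathbb{F}_q)|/|B|\cdot(\text{diagonal contribution})$ that arises.

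The main obstacle will be the bookkeeping issue flagged in the introduction: the bijection of \cite{LeRu} and the augmentation-counting result of \cite{HenryRu} are stated for different diagrammatic presentations of $S(K,\beta)$ (front versus Lagrangian projection, with different conventions for base points and Reeb chords on the satellite strands). The bulk of the work will be to produce an invariance argument ensuring that the total and reduced augmentation counts over $\mathbb{F}_q$, normalized by the appropriate Reeb-chord and base-point factors, agree between the two conventions. This amounts to showing that the Legendrian invariance established in \cite[Proposition~3.10]{LeRu} for $\operatorname{Rep}_1$, together with an analogous invariance for its Bruhat-cell refinement, lets one freely pass between the diagrams used in \cite{HenryRu} and those used in \cite{LeRu}. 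Once that translation is in place, the identification of the two sides is a direct assembly of the above pieces.
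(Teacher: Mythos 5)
Your overall strategy is the paper's own: decompose $\overline{\operatorname{Rep}}_1\big(K,\big(\mathbb{F}_q^n,0\big)\big)$ along the Bruhat decomposition ${\rm GL}(n,\mathbb{F}_q)=\sqcup_\beta B_\beta$, use the satellite--representation correspondence of \cite{LeRu} and the augmentation--ruling count of \cite{HenryRu} cell by cell, and then sum and match normalizations. However, the two steps you defer are exactly where the proof lives, and the justifications you propose would not carry them. First, the identification of the $d=0$ locus with \emph{reduced} rulings cannot be extracted from Remark \ref{rem:thin}: that remark is a statement about rulings alone (reducedness can be tested at a single point of $K$), whereas what is needed is a statement about the decomposition (\ref{eq:HenryRu}) of the augmentation set of the satellite. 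Concretely, one must show that the subset of augmentations of $S(K,\beta)$ vanishing on the $Y$-generators (which is what corresponds to $d=0$ under Proposition \ref{prop:LeRu}) is precisely the union of the pieces $W_\rho$ indexed by \emph{reduced} $\rho$. In the paper this is Step 5 of the proof of Lemma \ref{lem:count}, and it requires going inside the Morse complex sequence machinery of \cite{HenryRu}: checking that the $A$-form/$SR$-form bijections preserve the leftmost crossing carrying a handleslide, that with $K$ in plat position the $Y$-crossings are the leftmost crossings of $S^2_{xz}(K,\beta)$, and that a return at a $Y$-crossing forces a switch at some $Y$-crossing to its left. None of this follows from the thin-part lemma.

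Second, the diagram discrepancy between \cite{LeRu} and \cite{HenryRu} cannot be dispatched by ``invariance of $\operatorname{Rep}_1$ together with an analogous invariance for its Bruhat-cell refinement.'' The quantity that must be transported between diagrams is not a representation count for $K$ (invariance of $\operatorname{Rep}_1$ and of $R^1_{n,K}$ is used only once, to put $K$ in plat position at the outset), but the count of $Y=0$ augmentations of the satellite, and this is a diagram-dependent notion: no invariance statement for it is available to quote, and establishing one is tantamount to the comparison you are trying to avoid. The paper instead compares four explicit diagrams $S^1_{xy}(K,\beta)$, $S^2_{xy}(K,\beta)$, $S^1_{xz}(K,\beta)$, $S^2_{xz}(K,\beta)$, tracking the $Y=0$ subsets through explicit DGA maps (handleslide isomorphisms that fix the $Y$-generators, a subalgebra inclusion, basepoint-moving and basepoint-merging maps), and each comparison contributes a concrete multiplicative factor such as $|{\rm GL}(n,\mathbb{F}_q)|^{\ell-1}$, powers of $q^{n(n-1)/2}$, and $(q-1)^{n-c}$. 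These factors are not optional bookkeeping: they are exactly what combine with $q^{n^2\,{\rm rb}(K)/2}$ and $q^{\lambda(\beta)/2}$ in Lemma \ref{lem:count} to cancel against the normalizations in Definition \ref{def:total} and yield $1/c_n$, so without computing them your final normalization check cannot be completed. (A smaller point: the factor $q^{\lambda(\beta)/2}$ is not accounted for by the size of the Bruhat cell $B_\beta$; in the paper it arises from the Reeb chord count ${\rm rb}(S(K,\beta))=n^2\,{\rm rb}(K)+\lambda(\beta)$ entering the ruling-indexed decomposition.)
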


The proof of Theorem \ref{thm:Rep} rests on the following relation between representation counts and reduced ruling polynomials which we will establish in Sections~\ref{sec:strat}--\ref{sec:augsat} using extensions of results from~\cite{HenryRu} and~\cite{LeRu}. In \cite[Section~4]{LeRu}, a ``path subset''\footnote{The {\it path subset} $B_\beta$ is the subset of ${\rm GL}(n,\mathbb{F}_q)$ arising from specializing the path matrix $P^{xy}_\beta$ using arbitrary ring homomorphisms from $\mathcal{A}(\beta)$ to $\mathbb{F}_q$. The {\it path matrix} $P^{xy}_\beta$ is a matrix whose entries belong to $\mathcal{A}(\beta)$ and record certain left-to-right paths through the $xy$-diagram of $\beta$ that reflect the possible behavior of boundaries of holomorphic disks bordering $\beta$ from above. See \cite[Section~4.1]{LeRu}. Note that~\cite{LeRu} uses the opposite convention for composing Legendrian $n$-tangles, with $\alpha \cdot \beta$ defined as $\alpha$ stacked to the {\it left} of~$\beta$. Consequently, for a permutation $\beta \in S_n$, the permutation braid $\beta \subset J^1S^1$ used in the present article corresponds to $\beta^{-1} \subset J^1S^1$ in~\cite{LeRu}. As a~result, the notations $B_\beta$ and $P^{xy}_\beta$ used here correspond to $B_{\beta^{-1}}$ and $P_{\beta^{-1}}^{xy}$ in~\cite{LeRu}.} $B_\beta \subset {\rm GL}(n, \mathbb{F}_q)$ is associated to a reduced\footnote{A positive permutation braid $\beta\subset J^1S^1$ is {\it reduced} if its front diagram corresponds to a reduced braid word, i.e., one where the product $\sigma_i\sigma_{i+1}\sigma_i$ does not appear for any~$i$.} positive permutation braid, $\beta \in S_n$.

\begin{Lemma} \label{lem:count}
Let $\beta \in S_n$ be an $n$-stranded reduced positive permutation braid, and suppose that the $($connected$)$ Legendrian knot $K \subset J^1\R$ has its front diagram in plat position\footnote{A front diagram is in {\it plat position} if all left cusps appear at a common $x$-coordinate at the far left of the diagram and all right cusps appear at a common $x$-coordinate at the far right of the diagram.} and is equipped with $\ell$ base points where~$\ell$ is the number of right cusps of~$K$. Then,
\begin{gather*}
\big|\overline{\operatorname{Rep}}_1\big(K, \big(\mathbb{F}_q^n,0\big), B_\beta\big)\big| = |{\rm GL}(n, \mathbb{F}_q)|^{\ell-1}  q^{n(n-1)/2}  (q-1)^n   q^{n^2   {\rm rb}(K)/2}   q^{\lambda(\beta)/2}\widetilde{R}_{S(K, \beta)}(z),
\end{gather*}
where ${\rm rb}(K)$ is the number of Reeb chords of $K$, $\lambda(\beta)$ is the length of $\beta$, i.e., the number of crossings of $\beta$, and $z=q^{1/2}-q^{-1/2}$.
\end{Lemma}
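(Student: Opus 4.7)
The plan is to decompose the desired equality into two independent inputs and then reconcile their normalizations. The first input is an ungraded analogue of the representation--augmentation bijection from \cite{LeRu}, which identifies $n$-dimensional representations of $(\mathcal{A}(K),\partial)$ into $\operatorname{End}(\mathbb{F}_q^n)$, subject to a Bruhat cell constraint on the image of the distinguished product $t_1\cdots t_\ell$, with augmentations of the LCH DGA of the satellite $S(K,\beta)$. The second input is the main theorem of \cite{HenryRu} which expresses normalized counts of ungraded augmentations over $\mathbb{F}_q$ as values of the ungraded ruling polynomial at $z=q^{1/2}-q^{-1/2}$. Combining them, each with the appropriate restriction, should yield the lemma.

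More concretely, first I would extend the bijection of \cite{LeRu} to the ungraded setting. For $m\neq 1$, their bijection relates $n$-dimensional $m$-graded representations with $m$-graded augmentations of $S(K,\beta)$; for $m=1$ the situation is more delicate, as the introduction observes: ungraded augmentations of $S(K,\beta)$ correspond to ungraded representations of $(\mathcal{A}(K),\partial)$ on differential vector spaces $(\mathbb{F}_q^n,d)$ with upper triangular~$d$. Restricting to representations with $d=0$ singles out precisely the augmentations coming from \emph{reduced} normal rulings (cf.\ Remark \ref{rem:thin} and the discussion of ``thin parts'' in \cite{NgR2012}). This should give a bijection between $\overline{\operatorname{Rep}}_1(K,(\mathbb{F}_q^n,0), B_\beta)$ and the set of ungraded augmentations of $\mathcal{A}(S(K,\beta))$ attached to reduced rulings of $S(K,\beta)$, up to an explicit gauge factor of $|{\rm GL}(n,\mathbb{F}_q)|^{\ell-1}\cdot|B_\beta|$ accounting for the $\ell-1$ ``free'' basepoints of $K$ and the parametrization of the Bruhat cell $B_\beta$ itself (which contributes the $|B_\beta|=q^{\lambda(\beta)}\cdot q^{n(n-1)/2}(q-1)^n$ factor). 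I would then apply the Henry--Rutherford count to the restricted set to introduce $\wt{R}_{S(K,\beta)}(z)$, producing the additional $q^{n^2\cdot{\rm rb}(K)/2}$ factor from the Reeb chord count of $K$ and a~$q^{\lambda(\beta)/2}$-style factor from the extra crossings of $\beta$ within the satellite; halving these exponents is natural since $z=q^{1/2}-q^{-1/2}$.

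The main obstacle, and the reason the bulk of Section \ref{sec:5} is needed, is a technical incompatibility between the diagrams used to formulate these two results. The theorem of \cite{HenryRu} is proved using the Lagrangian projection DGA (typically from a resolution of the front diagram), whereas \cite{LeRu} and the statement here work with front diagrams in plat position, which are natural for forming the satellite $S(K,\beta)$. Translating between the two DGAs requires tracking how Reeb chord counts transform under stabilization/destabilization and how basepoints interact with left cusps, and showing that the two augmentation counts agree up to a~computable factor of the form $q^{?\cdot{\rm rb}/2}$. I expect this bookkeeping step, rather than either of the two structural bijections, to be the technical heart of the argument.

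Once the diagram comparison is in place, the proof concludes by multiplying the three contributions---the gauge factor $|{\rm GL}(n,\mathbb{F}_q)|^{\ell-1}\cdot|B_\beta|$, the ruling polynomial $\wt{R}_{S(K,\beta)}(z)$, and the Reeb chord normalization $q^{n^2\cdot{\rm rb}(K)/2}\cdot q^{\lambda(\beta)/2}$---and verifying that they combine to the stated right-hand side. The hypotheses that $K$ is connected, in plat position, and has exactly one basepoint per right cusp are what make the power-counting clean: connectedness ensures no independent gauge freedom on other components, plat position makes the satellite ${\rm rb}$-count track $n^2\cdot{\rm rb}(K)$ in a predictable way, and the basepoint/right-cusp matching makes the $|{\rm GL}(n,\mathbb{F}_q)|^{\ell-1}$ factor transparent.
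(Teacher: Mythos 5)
Your high-level route is the same as the paper's (a \cite{LeRu}-type representation/augmentation correspondence, the Henry--Rutherford decomposition of the augmentation set, and a diagram comparison in between), but as written it has a normalization error and a genuine gap. The normalization: you take $|{\rm GL}(n,\mathbb{F}_q)|^{\ell-1}\cdot|B_\beta|$ as a multiplicative ``gauge factor'' and then also a factor $q^{\lambda(\beta)/2}$ from the crossings of $\beta$; since $|B_\beta|=q^{\lambda(\beta)}q^{n(n-1)/2}(q-1)^n$, the product overshoots the stated right-hand side by $q^{\lambda(\beta)}$. In fact the size of the Bruhat cell never enters: the correspondence of Proposition~\ref{prop:LeRu} is a bijection in which the matrix $f(t_1)\in B_\beta$ is \emph{determined} by the augmentation's values on the generators $p_i$, $x^1_{i,j}$, $s_i$ through the path matrix, so there is no $|B_\beta|$-fold multiplicity to divide in or out. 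The only $\beta$-dependence outside $\widetilde{R}_{S(K,\beta)}(z)$ is the single $q^{\lambda(\beta)/2}$, and it arises from the Henry--Rutherford cell size $(q-1)^{j(\rho)}q^{b(\rho)}$ with $b(\rho)=\frac12\big({-j(\rho)}+{\rm rb}(S(K,\beta))\big)$ together with ${\rm rb}(S(K,\beta))=n^2\,{\rm rb}(K)+\lambda(\beta)$; the factor $q^{n(n-1)/2}(q-1)^n$ (the order of the Borel, not of the cell) comes out of the bookkeeping of base points and dips across the diagram changes, namely $|{\rm GL}(n)|^{\ell-1}\big(q^{n(n-1)/2}\big)^{-(\ell-1)}$ from the $\ell$ base points on $K$, $\big(q^{n(n-1)/2}\big)^{\ell}$ from eliminating the extra $x^k_{i,j},c^k_{i,j}$ generators when passing to the resolved front diagram, and $(q-1)^{n-c}$ from consolidating base points.

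The genuine gap is your first ``structural bijection'': you assert that $d=0$ representations correspond to augmentations ``coming from reduced normal rulings,'' citing Remark~\ref{rem:thin} and the thin-part discussion of \cite{NgR2012}. Those statements are about rulings only; the decomposition $\overline{\operatorname{Aug}}_1=\bigsqcup_\rho W_\rho$ is a separate structure from \cite{HenryRu}, and nothing in your sketch links the condition $d=0$ on the representation side to the condition that the cell index $\rho$ be reduced. The paper bridges this with the intermediate condition that the augmentation of the satellite vanish on all $Y$-generators: on one end, $d=0$ is equivalent to $\epsilon(Y)=0$ via the multi-base-point extension of \cite[Theorem~6.1]{LeRu} (Proposition~\ref{prop:LeRu}); on the other end, one must track the $Y=0$ subset through every diagram change (checking that the handleslide-type DGA isomorphisms and base-point moves fix the $Y$-generators, and that the extension over the $c^k_{i,j},x^k_{i,j}$ generators is unique), and then prove, via the $A$-form/$SR$-form Morse complex sequence bijections of \cite{HenryRu}, that $\epsilon\in W_\rho$ satisfies $\epsilon(Y)=0$ if and only if $\rho$ is reduced --- using plat position so that the $Y$-crossings are leftmost, that reduced means no switches at $Y$-crossings, and that a return at a $Y$-crossing forces a switch at an earlier $Y$-crossing. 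Without an argument of this kind (applied cell-by-cell to the decomposition, not just to the total augmentation count), the restricted correspondence you start from is unproven, and it --- not only the diagram bookkeeping you flag --- is the technical heart of the lemma.
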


\begin{proof}[Proof of Theorem \ref{thm:Rep}] Since both sides of the equation are Legendrian isotopy invariant, we may assume $K$ is in plat position.
In \cite[Proposition 4.14]{LeRu}, it is shown that ${\rm GL}(n, \mathbb{F}_q) = \sqcup_{\beta \in S_n} B_\beta$. (Actually, this coincides with the Bruhat decomposition of ${\rm GL}(n, \mathbb{F}_q)$.) Thus,
\[
\overline{\operatorname{Rep}}_1\big(K, \big(\mathbb{F}_q^n,0\big), {\rm GL}(n, \mathbb{F}_q)\big) = \sqcup_{\beta \in S_n} \overline{\operatorname{Rep}}_1\big(K, \big(\mathbb{F}_q^n,0\big),B_\beta\big),
\]
 and using Lemma \ref{lem:count} and Definition \ref{def:nruling} we compute
\begin{align*}
\operatorname{Rep}_1\big(K, \mathbb{F}_q^n\big) & = \big(q^{n^2}\big)^{-{\rm rb}(K)/2}  |{\rm GL}(n)|^{-\ell}  \sum_{\beta \in S_n} \overline{\operatorname{Rep}}_1\big(K, \big(\mathbb{F}_q^n,0\big),B_\beta\big) \\
 & = |{\rm GL}(n)|^{-1}   q^{n(n-1)/2}(q-1)^n \sum_{\beta \in S_n}q^{\lambda(\beta)/2} \widetilde{R}_{S(K, \beta)}(z) \\
& = \left( q^{n(n-1)/2}  \prod_{m=1}^n\big(q^m-1\big)\right)^{-1}  q^{n(n-1)/2}(q-1)^n \sum_{\beta \in S_n} q^{\lambda(\beta)/2} \widetilde{R}_{S(K, \beta)}(z) \\
& = \left(\prod_{m=1}^n\left(\frac{q^m-1}{q-1} \right) \right)^{-1} \sum_{\beta \in S_n} q^{\lambda(\beta)/2} \widetilde{R}_{S(K, \beta)}(z) \\
& = \frac{1}{c_n} \sum_{\beta \in S_n} q^{\lambda(\beta)/2} \widetilde{R}_{S(K, \beta)}(z) = R^1_{n,K}(q).
\end{align*}
(The notation $c_n$ is as in Definition \ref{def:nruling} with $s = q^{1/2}$.)
\end{proof}

\subsection{Strategy of the proof of Lemma \ref{lem:count}} \label{sec:strat}

The relation between counts of representations on $\big(\mathbb{F}_q^n,0\big)$ and reduced ruling polynomials stated in Lemma \ref{lem:count} is based on refinements of two results:
\begin{enumerate}\itemsep=0pt
\item In \cite[Theorem~6.1]{LeRu}, it is shown that for any $n$-stranded reduced positive permutation braid $\beta \in S_n$,
 (using $1$ base point on $K$ and a particular $xy$-diagram of $S(K,\beta)$) when $\operatorname{char}(\mathbb{F}) = 2$ there is a bijection
\begin{gather} \label{eq:LeRu}
\bigsqcup_{d}\overline{\operatorname{Rep}}_1\big(K, \big(\mathbb{F}^n,d\big),B_\beta\big) \quad \longleftrightarrow \quad \overline{\operatorname{Aug}}_1(S(K, \beta), \mathbb{F}),
\end{gather}
where the union is over all strictly upper triangular differentials on $\mathbb{F}^n$.

\item In  \cite[Theorem 3.2]{HenryRu}, for any Legendrian $K' \subset J^1\R$ (using a particular $xy$-diagram of~$K'$) the set of augmentations of~$K'$ is decomposed into pieces
\begin{gather} \label{eq:HenryRu}
\operatorname{Aug}_1(K', \mathbb{F}) = \bigsqcup_{\rho} (\mathbb{F}^*)^{a(\rho)} \times \mathbb{F}^{b(\rho)},
\end{gather}
where the disjoint union is indexed by all normal rulings of $K'$ and the exponents~$a(\rho)$ and $b(\rho)$ are specified by the combinatorics of~$\rho$. Theorem~1.1 of~\cite{HenryRu} then applies the decomposition to relate the Legendrian invariant augmentation numbers with the ruling polynomial.
\end{enumerate}
The idea behind the proof of Lemma~\ref{lem:count} is then to check that the subset of $\operatorname{Aug}_1(S(K,\beta), \mathbb{F})$ corresponding under~(\ref{eq:LeRu}) to $\overline{\operatorname{Rep}}_1\big(K, \big(\mathbb{F}^n, 0\big), B_\beta\big)$, i.e., those representations with $d=0$, is the part of the disjoint union~(\ref{eq:HenryRu}) with $K' = S(K,\beta)$ that is indexed by {\it reduced} normal rulings of~$S(K, \beta)$. This is roughly what we shall do. However, complications arise as different (but Legendrian isotopic) $xy$-diagrams for the satellite $S(K,\beta)$ having different (but stable tame isomorphic) DGAs are used in~(\ref{eq:LeRu}) and~(\ref{eq:HenryRu}). As a result, we need to also keep track of the way the set $\operatorname{Aug}_1(S(K, \beta), \mathbb{F})$ changes when transitioning between these different diagrams for $S(K,\beta)$. This contributes to the factor appearing in front of~$\widetilde{R}_{S(K,\beta)}$ in the statement of Lemma~\ref{lem:count}.

\begin{Remark}
 In the case of $m$-graded representations with $m \neq 1$, $d=0$ is the only term in the disjoint union (\ref{eq:LeRu}) for grading reasons.
\end{Remark}

\subsection[Four diagrams for the satellite, $S(K, \beta)$]{Four diagrams for the satellite, $\boldsymbol{S(K, \beta)}$} \label{sec:satellite}

In establishing Lemma~\ref{lem:count} we will make use of four different (but Legendrian isotopic) versions of the satellite $S(K,\beta)$. The four $xy$-diagrams are denoted $S^1_{xy}(K,\beta)$, $S^2_{xy}(K, \beta)$, $S^1_{xz}(K,\beta)$, $S^2_{xz}(K,\beta)$ and will be defined momentarily; see Fig.~\ref{fig:SKbxy}. As a~preliminary, we consider $xz$-diag\-rams (front projection) and $xy$-diagrams (Lagrangian projection) for the companion \mbox{$K \subset J^1\R$} and pattern $\beta \subset J^1S^1$.

\subsubsection[Diagrams for $K$ and $\beta$]{Diagrams for $\boldsymbol{K}$ and $\boldsymbol{\beta}$}

For the companion knot $K \subset J^1\R$, apply Ng's resolution procedure (see~\cite{Ng1}) so that the $xy$-diagram for $K$ is related to the front projection of~$K$ by placing the strand with smaller slope on top at crossings and adding an extra loop at right cusps. Enumerate the Reeb chords of~$K$ by $a_1, \ldots, a_m$, and $c_1, \ldots, c_\ell$ where the $a_i$ correspond to crossings of the front projection of $K$ and the $c_i$ are the extra crossings near right cusps that arise from the resolution procedure. Choose an initial base point~$*$ of~$K$ not located on any of the loops at right cusps and at a point where the front diagram of~$K$ is oriented left-to-right. For convenience, we assume the $c_1, \ldots, c_\ell$ are enumerated in the order they appear when following along~$K$ according to its orientation, starting at~$*$.

For the braid $\beta \subset J^1S^1$, we form an $xy$-diagram as indicated in Fig.~\ref{fig:Dips} having $\ell$ dips in addition to the original crossings coming from the front projection of $\beta$. This is done by applying the resolution procedure to the front diagram of $\beta$ as in \cite[Section~2.2]{NgTraynor} (this amounts to adding a dip to the right of the crossings of $\beta$), and then adding $\ell-1$ extra dips to the $xy$-diagram. In addition, a collection of $n$ basepoints, $*_1, \ldots, *_n$, are placed immediately to the left of the crossings of $\beta$, one on each strand. Note that the addition of the dips can be accomplished by Legendrian isotopy as indicated in \cite[Section 3.1]{Sab}, and we have used a minor variation on the resolution procedure of \cite[Section 2.2]{NgTraynor} so that the dip arising from the resolution procedure has the same form as the others. Numbering the dips $1,\ldots, \ell$ as they appear from left to right in $T^*S^1$ (viewed as $[0,1]\times \R$ with boundaries identified) the $k$-th dip consists of two groups of crossings $x_{i,j}^k$ and $y_{i,j}^k$ for $1 \leq i < j \leq n$ that appear in the left and right half of the dip respectively. For $1 \leq k \leq \ell$, we collect these crossings as the entries of strictly upper triangular matrices denoted $X_k$ and $Y_k$. The crossings that correspond to $xz$-crossings of $\beta$ are labeled $p_1, \ldots, p_\lambda$, and DGA generators associated to the basepoints $*_1,\ldots, *_n$ will be labelled $s_1, \ldots, s_n$.

\begin{figure}
\labellist
\small
\pinlabel $z$ [l] at 4 26
\pinlabel $x$ [l] at 28 2
\pinlabel $y$ [l] at 250 26
\pinlabel $x$ [l] at 274 2
\pinlabel $X_1$ at 392 24
\pinlabel $Y_1$ at 452 24
\pinlabel $X_2$ at 542 24
\pinlabel $Y_2$ at 602 24
\pinlabel $\leadsto$ at 200 96
\pinlabel $*_3$ at 264 82
\pinlabel $*_2$ at 264 98
\pinlabel $*_1$ at 264 112

\endlabellist
\centerline{\includegraphics[scale=.7]{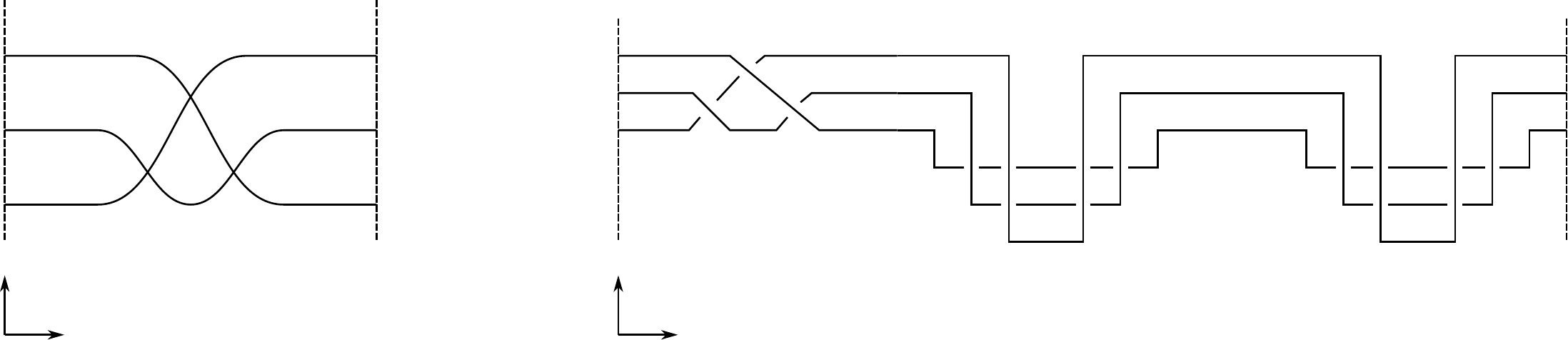}}

\caption{A Lagrangian diagram for the positive braid $\beta = \sigma_2 \sigma_1\sigma_2$ with $\ell =2$ dips.}
\label{fig:Dips}
\end{figure}

In constructing the diagrams $S^1_{xy}(K,\beta)$ and $S^2_{xy}(K,\beta)$ and comparing their DGAs, it is useful to be able to move the locations of dips around via a Legendrian isotopy. This may be accomplished by converting an isotopy $\phi_t\colon S^1 \rightarrow S^1$, $0\leq t \leq 1$, to an ambient contact isotopy
\begin{gather} \label{eq:contact}
\Phi_t\colon J^1S^1 \rightarrow J^1S^1,\qquad \Phi_t(x,y,z) = \left(\phi_t(x), \frac{y}{\phi'_t(x)}, z\right), \qquad 0\leq t \leq 1.
\end{gather}
In particular, given any cyclically ordered collection of points $x_1, \ldots, x_\ell \in S^1$, after a Legendrian isotopy the $X_k$ and $Y_k$ crossings can be arranged to appear above an arbitrarily small neighborhood of $x_k$ in $S^1$ for $1\leq k \leq \ell$.

\subsubsection{Diagrams for Legendrian satellites}

The Legendrian satellite $S(K,\beta) \subset J^1\R$ is formed by scaling the $y$ and $z$ coordinates of $J^1S^1$ so that $\beta$ sits in a small neighborhood, $N_0 \subset J^1S^1$, of the $0$-section and then applying a~contactomorphism $\Psi\colon N_0 \rightarrow N(K)$ of $N_0$ onto a Weinstein tubular neighborhood of~$K$. Two general methods for forming $xy$-diagrams of satellites are as follows:

\begin{itemize}\itemsep=0pt
\item {\it The $xy$-method.} Use the orientations of $K$ and $\R^2$ to identify an (immersed) annular neighborhood, $N_{xy}$, of the $xy$-diagram of $K$ with $S^1 \times (-\epsilon,\epsilon)$. Then, (after scaling the $y$-coordinate appropriately) place an $xy$-diagram for $\beta$ into $N_{xy}$. As $\Psi$ can be chosen to preserve the Reeb vector field (see \cite{Geiges}), this indeed produces an $xy$-diagram for $S(K,\beta)$.

\item {\it The $xz$-method.} First form an $xz$-diagram for $S(K,\beta)$ as in Section \ref{sec:sat}: start with the $n$-copy of $K$ (this is $n$-parallel copies of $K$ shifted a small amount in the $z$-direction), then insert the $xz$-projection of $\beta$
 at the location of the initial base point of $K$.
 Finally, produce an $xy$-diagram for $S(K,\beta)$ by applying Ng's resolution procedure.
\end{itemize}

{\bf Definition of $\boldsymbol{S^1_{xy}(K,\beta)}$:} To form $S^1_{xy}(K,\beta)$ expand $*$ to a cluster of base points $*_1, \ldots, *_\ell$ all appearing in order (according to the orientation of $K$) in a small neighborhood of~$*$. Then, form the satellite with the $xy$-diagram of $\beta$ described above in such a way that the crossings $p_1, \ldots, p_\lambda$ from $\beta$ and the crossings from $X_1$, $Y_1$ appear in a neighborhood of~$*_1$, and the crossings from $X_k, Y_k$ appear in a neighborhood of $*_k$ for $2 \leq k \leq \ell$.

{\bf Definition of $\boldsymbol{S^2_{xy}(K,\beta)}$:} This $xy$-diagram is formed from $S^1_{xy}(K,\beta)$ by using a Legendrian isotopy of $\beta$ (constructed from an isotopy of~$S^1$ as in~(\ref{eq:contact})) to relocate each collection of~$X_i$ crossings to a neighborhood of a right cusp of~$K$ (so that exactly one $X_i$ appears near each right cusp); relocate each group of $Y_i$ crossings to a neighborhood of a left cusp; and leave the crossings $p_1, \ldots, p_\lambda$ of $\beta$ (and the basepoints $*_1, \ldots, *_n$) in place at the location of the initial base point $*$ for~$K$. (Recall that~$\ell$ is both the number of dips and the number of right cusps of~$K$.)

{\bf Definition of $\boldsymbol{S^1_{xz}(K,\beta)}$ and $\boldsymbol{S^2_{xz}(K,\beta)}$:} Both $S^1_{xz}(K,\beta)$ and $S^2_{xz}(K,\beta)$ are formed using the $xz$-method. The only difference between the two is the placement of base points. For~$S^1_{xz}(K,\beta)$, base points $*_1, \ldots, *_n$ appear, one on each parallel strand of the $n$-copy, just before~$\beta$ (with respect to the orientation of $K$). For $S^2_{xz}(K,\beta)$, we have base points $*_1, \ldots, *_c$, one on each component of $S(K,\beta)$ placed on a loop near some right cusp of the component.

See Fig.~\ref{fig:SKbxy}.

\begin{figure}
\labellist
\small
\pinlabel $S^1_{xy}(K,\beta)$ [t] at 130 0
\pinlabel $S^2_{xy}(K,\beta)$ [t] at 474 0

\endlabellist
\centerline{\includegraphics[scale=.6]{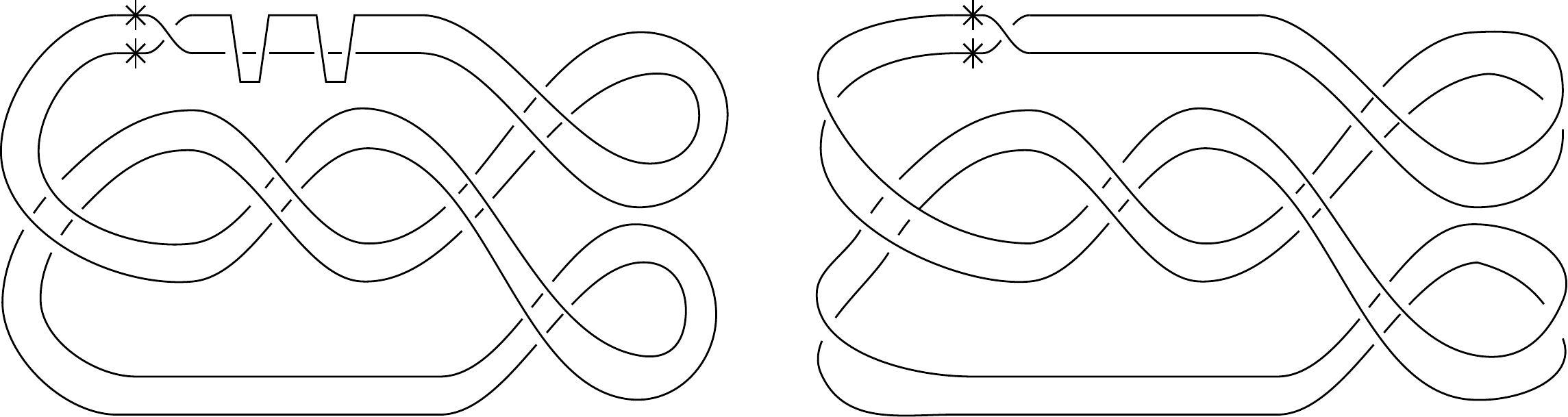}}

\quad

\quad

\labellist
\small
\pinlabel $S^1_{xz}(K,\beta)$ [t] at 126 0
\pinlabel $S^2_{xz}(K,\beta)$ [t] at 460 0
\endlabellist

\centerline{\includegraphics[scale=.6]{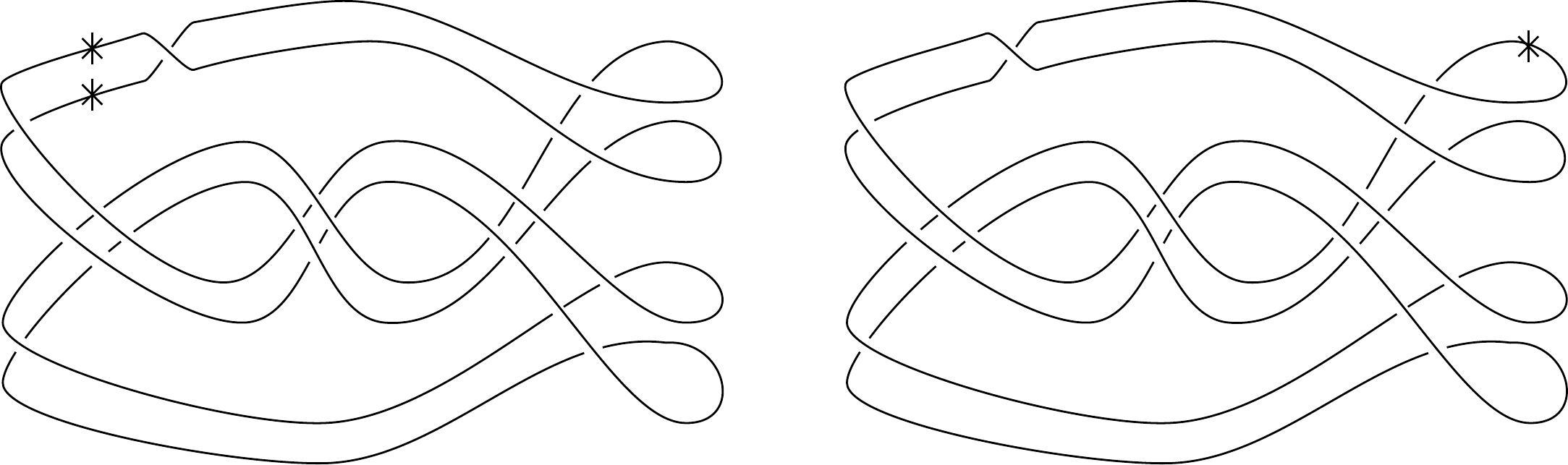}}

\vspace{2mm}

\caption{The Lagrangian ($xy$)-diagrams $S^1_{xy}(K,\beta)$, $S^2_{xy}(K,\beta)$, $S^1_{xz}(K,\beta)$, and $S^2_{xz}(K,\beta)$ where $K$ is a Legendrian trefoil and $\beta = \sigma_1 \in S_2$.}
\label{fig:SKbxy}
\end{figure}

\subsubsection{DGA generators} We set notations for the generators of the DGAs arising from the various $xy$-diagrams of $S(K,\beta)$ that have been defined. Many generators are indexed with a pair of subscripts $i$,~$j$. Such a~subscript indicates that at the overstrand of the crossing belongs to the $i$-th copy of $K$ and the understrand belongs to the $j$-th copy of $K$. Here,
 outside of an arc $A \subset K$ where the $p_1, \ldots, p_\lambda$ crossings of $\beta$ appear, $S(K,\beta)$ consists of $n$ copies of $K\setminus A$, which we label from $1$ to $n$ according to the descending order of their $y$-coordinates at the boundary of $A$.

{\bf Generators of $\boldsymbol{\alg(S^1_{xy}(K,\beta))}$ and $\boldsymbol{\alg(S^2_{xy}(K,\beta))}$:} The generating sets for these DGAs are in bijection. Both contain the DGA of $\beta$ as a sub-DGA, and this accounts for generators of the form $p_1, \ldots, p_\lambda$, $x_{i,j}^k$, $y^k_{i,j}$ for $1\leq k \leq \ell$ and $1\leq i < j \leq n$ as well as invertible generators $s_1, \ldots, s_n$ associated to the base points $*_1, \ldots, *_n$ on $\beta$. In addition, for each of the Reeb chords $a_1, \ldots, a_m$, and $c_1,\ldots, c_\ell$ of $K$ there are $n^2$ Reeb chords for $S^1_{xy}(K,\beta)$ and $S^2_{xy}(K,\beta)$ that we denote by $a^{k_1}_{i,j}$ and $c^{k_2}_{i,j}$, $1\leq k_1 \leq m$, $1\leq k_2 \leq \ell$, $1 \leq i,j \leq n$.

{\bf Generators of $\boldsymbol{\alg(S^1_{xz}(K,\beta))}$ and $\boldsymbol{\alg(S^2_{xz}(K,\beta))}$:} Both of these diagram have Reeb chords
\begin{itemize}\itemsep=0pt
\item $p_1, \ldots, p_\lambda$ from the $xz$-crossings of $\beta$;
\item $a^{k_1}_{i,j}$, $1 \leq k_1 \leq m$, $1 \leq i,j \leq n$, from the $xz$-crossings of~$K$;
\item $y^{k_2}_{i,j}$, $1 \leq k_2 \leq \ell$, $1 \leq i<j \leq n$, from crossings near left cusps;
\item $c^{k_2}_{i,j}$, $1 \leq k_2 \leq \ell$, $1 \leq j \leq i \leq n$, from crossings near right cusps.
\end{itemize}
The invertible generators coming from base points will be denoted as $s_1, \ldots, s_n$ for $S^1_{xz}(K,\beta)$ and as $r_1, \ldots, r_c$ for $S^2_{xz}(K,\beta)$.

In particular, note that for any of the four diagrams for $S(K,\beta)$ there is a collection of generators of the form $y^{k}_{i,j}$ which we will refer to as {\it $Y$-generators}.
\begin{Definition} Let $K'$ denote one of $S^1_{xy}(K,\beta)$, $S^2_{xy}(K,\beta)$, $S^1_{xz}(K,\beta)$, or $S^2_{xz}(K,\beta)$. We denote by
\[
\overline{\operatorname{Aug}}_1(K', \mathbb{F})_{Y=0}
\]
the set of augmentations of $\mathcal{A}(K')$ to $\mathbb{F}$ that map all $Y$-generators to~$0$.
\end{Definition}

The subsets $\overline{\operatorname{Aug}}_1(K', \mathbb{F})_{Y=0} \subset \overline{\operatorname{Aug}}_1(K', \mathbb{F})$ play an important role in the proof Lemma \ref{lem:count} as they correspond to representations with $d=0$ as well as to reduced normal rulings.

\begin{Remark}Computations of the differential for similar DGAs of $1$-dimensional Legendrian satellites are presented in detail in several places, and it is not difficult to extend these computations to give complete formulas for differentials in the present setting. See especially~\cite{LeRu} for satellites formed via the $xy$-method and~\cite{NgR2012} and~\cite{NRSSZ} for satellites formed via the $xz$-method. Rather than giving a complete description of differentials here, we state several partial formulas as they become useful in the following proofs.
\end{Remark}

\subsection{Representations and augmentations of the satellite} \label{sec:augsat}

We will use the following mild variation of \cite[Theorem 6.1]{LeRu} to transition between $n$-dimensional representations and augmentations of satellites.
As in the construction of $S^1_{xy}(K,\beta)$, expand the initial base point $*$ of $K$ into a cluster of base points $*_1, \ldots, *_\ell$ and consider the DGA $(\alg(K), \partial)$ with invertible generators $t_1, \ldots, t_\ell$.

\begin{Proposition} \label{prop:LeRu} Let $\beta$ be a reduced positive permutation braid, and construct $S^1_{xy}(K, \beta)$ as above.
There is a bijection
\[
\big\{ f \in \overline{\operatorname{Rep}}_1\big(K,\big(\mathbb{F}^n,0\big)\big) \,|\, f(t_1) \in B_\beta \mbox{ and } f(t_i) \in N_+ \mbox{ for $i \geq 2$}\big\} \quad \leftrightarrow \quad \overline{\aug}_1(S^1_{xy}(K, \beta), \mathbb{F})_{Y=0},
\]
where $B_\beta$ is the path subset of $\beta$ and $N_+ \subset {\rm GL}(n, \mathbb{F})$ is the group of upper triangular matrices with~$1$'s on the diagonal.
\end{Proposition}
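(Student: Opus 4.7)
My plan is to derive the proposition by refining \cite[Theorem~6.1]{LeRu}, which in the analogous single base point setting establishes the bijection
\[
\bigsqcup_d \overline{\operatorname{Rep}}_1(K,(\mathbb{F}^n,d),B_\beta) \;\longleftrightarrow\; \overline{\operatorname{Aug}}_1(S^1_{xy}(K,\beta),\mathbb{F}),
\]
where $d$ ranges over strictly upper triangular differentials on $\mathbb{F}^n$. There are two refinements to carry out: (a) restrict both sides to the ``$d=0$, $Y=0$'' pieces; and (b) promote the single base point on~$K$ to the cluster $*_1,\dots,*_\ell$, which replaces the invertible $t\in\mathcal{A}(K)$ with the ordered product $t_1\cdots t_\ell$ and allows the images $f(t_k)$ to be tracked individually.

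For (a), I would extract from the proof of \cite[Theorem~6.1]{LeRu} the explicit way in which the values of $\epsilon$ on the dip generators $X_k,Y_k$ encode the target data of the representation. The structure of the dips (as in \cite{Sab,NgTraynor}) makes the transport matrix through the $k$-th dip equal, up to the normalizations $\epsilon(s_j)$, to a product built from $I+\epsilon(X_k)$ and $I+\epsilon(Y_k)$. The differential $d$ on $\mathbb{F}^n$ is assembled linearly from the matrices $\epsilon(Y_k)$ (the precise combination is forced by comparing $\partial$ of the Reeb chord generators of the dip on both sides). In particular, $d=0$ if and only if every $\epsilon(Y_k)=0$, giving a restricted bijection
\[
\overline{\operatorname{Rep}}_1(K,(\mathbb{F}^n,0),B_\beta)\;\longleftrightarrow\;\overline{\operatorname{Aug}}_1(S^1_{xy}(K,\beta),\mathbb{F})_{Y=0}.
\]

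For (b), I would observe that introducing the extra base points $*_2,\ldots,*_\ell$ near $*$ on~$K$ does not change the generating set of $\mathcal{A}(S^1_{xy}(K,\beta))$ (the satellite's base points $s_1,\ldots,s_n$ still sit on the $\beta$-strands), while on the representation side it replaces the single invertible $f(t)$ by an $\ell$-tuple of independent invertibles with product $f(t_1\cdots t_\ell)$. Because in the construction of $S^1_{xy}(K,\beta)$ the dips are placed in bijection with the $*_k$ (with $*_1$ also carrying the $\beta$-crossings), the refined bijection identifies $f(t_k)$ with the transport matrix through the $k$-th dip. When $Y=0$ this transport becomes $I+\epsilon(X_k)$, a strictly unit-upper-triangular matrix in $N_+$, for $k\geq 2$, while for $k=1$ it is the full path-matrix specialization $P^{xy}_\beta[\epsilon]$ and hence lies in $B_\beta$ by definition.

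The main obstacle I expect is the bookkeeping required in step (b): verifying that the single-base-point bijection of \cite{LeRu} really factors into per-dip transport matrices in this clean way, and that the correspondence is a bijection rather than merely a surjection onto the described subset. Concretely, one must check that given $f$ with $f(t_1)\in B_\beta$ and $f(t_k)\in N_+$ for $k\geq 2$, the augmentation values on $X_k$ for $k\geq 2$ are recovered uniquely from $\epsilon(X_k)=f(t_k)-I$, and that the values on the $\beta$-generators $p_i$, $s_j$, and on $X_1$ are recovered from the factorization of $f(t_1)$ through the path matrix exactly as in the original theorem of~\cite{LeRu}. Once this factorization lemma is in place, the forward and inverse maps of the bijection can be assembled directly from those of \cite[Theorem~6.1]{LeRu}.
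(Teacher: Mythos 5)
Your proposal is correct and takes essentially the same route as the paper: the paper also proves this as a mild extension of \cite[Theorem~6.1]{LeRu}, defining an algebra map $\Phi$ with $\Phi(t_1)=P^{xy}_\beta$ and $\Phi(t_k)=I+X_k$ for $k\geq 2$, so that $Y=0$ augmentations correspond via $\epsilon\mapsto\epsilon\circ\Phi$ to representations with $d=0$, exactly matching your per-dip transport bookkeeping. The only points you leave implicit---working over characteristic $2$ to avoid signs, and the explicit role of the reduced-braid hypothesis in uniquely recovering the values on $p_i$, $s_j$, $x^1_{i,j}$ from $f(t_1)$---are handled in the paper just as in \cite{LeRu}.
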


\begin{proof}The proof is similar to \cite[Theorem~6.1]{LeRu} which implies the case of only~$1$ base point. We sketch the argument and highlight the modifications to the proof for the case of more than one basepoint. To avoid considering signs, we only treat the case where $\operatorname{char}(\mathbb{F}) = 2$ (which is the only case needed for Lemma~\ref{lem:count}). This allows us to work with the LCH DGA defined over~$\Z/2$ rather than over~$\Z$ (since any representation with $\operatorname{char}(\mathbb{F})=2$ factors through the change of coefficients map from the DGA over~$\Z$ to the DGA over~$\Z/2$).

 To relate the differential on $D\colon \alg\big(S^1_{xy}(K,\beta)\big) \rightarrow \alg\big(S^1_{xy}(K,\beta)\big)$ to the differential $\partial$ on $\alg(K)$ consider a $\Z/2$-algebra homomorphism
\[
\Phi\colon \ \alg(K) \rightarrow \operatorname{Mat}\big(n, \alg\big(S^1_{xy}(K,\beta)\big)\big)
\]
that sends Reeb chords $a_k$ or $c_k$ to the corresponding $n\times n$ matrices of Reeb chords $A_k = \big(a^k_{i,j}\big)$ or $C_k = \big(c^k_{i,j}\big)$ and satisfies
\begin{gather*} 
\Phi(t_1) = P^{xy}_{\beta},
\qquad \Phi(t_k) = (I+X_k), \qquad \mbox{for $2\leq k \leq \ell$},
\end{gather*}
where $P^{xy}_{\beta}$ and $P^{xz}_{\beta}$ denote the $xy$- and $xz$-path matrices of $\beta$ as defined in \cite[Section~4.1]{LeRu}. Over~$\Z/2$, with the differential $D$ on $\alg\big(S^1_{xy}(K,\beta)\big)$ extended entry-by-entry to \linebreak $\operatorname{Mat}\big(n, \alg\big(S^1_{xy}(K,\beta)\big)\big)$ we have the identities,
\begin{gather}
D Y_k = Y_k^2, \qquad 1 \leq k \leq \ell, \qquad \mbox{and} \label{eq:Y1} \\
D \circ \Phi(x) = \Phi \circ \partial(x) + O(Y), \qquad \mbox{for any generator $x \in\mathcal{A}(K)$}, \label{eq:Y2}
\end{gather}
where $O(Y)$ denotes a term belonging to the $2$-sided ideal generated by the $Y$-generators. This is essentially as in \cite[Section~5]{LeRu}; see especially Proposition~5.2 and Corollary~5.3 of~\cite{LeRu}. For generalizing to the case of more than one base point, note that, for $1 \leq k \leq \ell$, $\Phi(t_k)$ is the left-to-right $xy$-path matrix for the part of the $xy$-diagram of~$\beta$ that sits in a~neighborhood of the basepoint~$*_k$ on~$K$. As a result, the entries of $\Phi(t_k)$ (resp.\ $\Phi\big(t_k^{-1}\big)$)
 record the possibly negative punctures that boundaries of ``thick disks'' of~$S^1_{xy}(K,\beta)$ can have when they pass through the location of~$*_k$ on~$K$ in a way that agrees (resp.\ disagrees) with the orientation of~$K$; see \cite[Sections~4.1 and~5.2]{LeRu}.

Now, the bijection from the statement of the proposition arises from associating to an augmentation $\epsilon \in \overline{\aug}_1\big(S^1_{xy}(K, \beta), \mathbb{F}\big)_{Y=0}$ the matrix representation $f\colon  \mathcal{A}(K) \rightarrow \operatorname{Mat}(n,\mathbb{F})$ given by $f = \epsilon \circ \Phi$. From here~(\ref{eq:Y1}) and~(\ref{eq:Y2}) can then be used to show that under the assumption that $\epsilon(Y)=0$, the augmentation equation $\epsilon \circ D = 0$ is equivalent to the representation equation $f \circ \partial = 0$, cf.\ \cite[Theorem~6.1]{LeRu}. (Note that the hypothesis that $\beta$ is a reduced positive permutation braid is used as in~\cite{LeRu} to see that the equation $\epsilon(D \Phi(t_1)) =0 $ is equivalent to having $\epsilon \circ D(x) =0$ for all generators of the form $p_i$ or $x^1_{i,j}$.)
 \end{proof}

We are now prepared to give the proof of Lemma~\ref{lem:count}.

\begin{proof}[Proof of Lemma \ref{lem:count}]\quad

 {\bf Step 1.} Establish that
\begin{gather*}
\big|\overline{\operatorname{Rep}}_1\big(K, \big(\mathbb{F}_q^n,0\big), B_\beta\big)\big| = \frac{|{\rm GL}(n)|^{\ell-1}}{(q^{n(n-1)/2})^{\ell-1}} \cdot \big| \overline{\aug}_1\big(S^{1}_{xy}(K, \beta), \mathbb{F}_q\big)_{Y=0}\big|.
\end{gather*}

Keeping in mind that $\overline{\operatorname{Rep}}_1(K, (\mathbb{F}_q^n,0), B_\beta)$ is defined in terms of the DGA $\mathcal{A}(K, *_1, \ldots, *_\ell)$ of~$K$ equipped with $\ell$ base points $*_1, \ldots, *_\ell$, let $\overline{\operatorname{Rep}}_1\big(\mathcal{A}(K, *), \big(\mathbb{F}_q^n,0\big), B_\beta\big)$ denote the corresponding set of representations with respect to the DGA of~$K$ equipped with only the one initial base point~$*$, i.e., the set of ungraded representations $f\colon (\mathcal{A}(K,*), \partial) \rightarrow (\operatorname{Mat}(n, \mathbb{F}_q),0)$ ha\-ving \mbox{$f(t) \in B_\beta$}. The differential of a Reeb chord~$b$ of $K$ in $\mathcal{A}(K, *_1, \ldots, *_\ell)$ is obtained from its differential in~$\mathcal{A}(K,*)$ by replacing all occurrences of~$t$ by the product $t_1\cdots t_\ell$. Consequently,
\[
\big|\overline{\operatorname{Rep}}_1\big(K, \big(\mathbb{F}_q^n,0\big), B_\beta\big)\big| = |{\rm GL}(n)|^{\ell -1} \cdot \big|\overline{\operatorname{Rep}}_1\big(\mathcal{A}(K, *), \big(\mathbb{F}_q^n,0\big), B_\beta\big)\big|,
\]
where the factor $|{\rm GL}(n)|^{\ell-1}$ arises as the number of ways to factor a given matrix $f(t) \in B_\beta$ into a product $f(t_1) \cdots f(t_\ell)$ with $f(t_i) \in {\rm GL}(n)$ for $1 \leq i \leq \ell$. On the other hand, using Proposition~\ref{prop:LeRu} gives
\[
\big| \overline{\aug}_1(S^{1}_{xy}(K, \beta), \mathbb{F}_q)_{Y=0}\big| = \big(q^{n(n-1)/2}\big)^{\ell-1} \big|\overline{\operatorname{Rep}}_1\big(\mathcal{A}(K, *), \big(\mathbb{F}_q^n,0\big), B_\beta\big)\big|,
\]
where in this case the factor $\big(q^{n(n-1)/2}\big)^{\ell-1}$ arises as the number of ways to factor a given matrix $f(t) \in B_\beta$ into a product $f(t_1) \cdots f(t_\ell)$ with $f(t_1) \in B_\beta$ and $f(t_i) \in N_+$ for $2 \leq i \leq \ell$. Here, we use the connection with the Bruhat decomposition from \cite[Section~4.3]{LeRu} so that~$B_\beta$ has the form $BS_{\beta} B$ where $S_\beta$ is a permutation matrix and~$B$ is the group of invertible upper triangular matrices. Therefore, given $f(t) \in B_\beta$ and strictly upper-triangular matrices \mbox{$f(t_2), \ldots, f(t_{\ell}) \in N_+$} there is a unique element $f(t_1) \in B_\beta$ so that $f(t) = f(t_1) f(t_2)\cdots f(t_{\ell})$.

{\bf Step 2.} Establish that
\begin{gather*}
\big| \overline{\aug}_1\big(S^{1}_{xy}(K, \beta), \mathbb{F}_q\big)_{Y=0}\big| = \big| \overline{\aug}_1\big(S^{2}_{xy}(K, \beta), \mathbb{F}_q\big)_{Y=0}\big|.
\end{gather*}

A Legendrian isotopy of $\beta$ as in (\ref{eq:contact}) can be used to produce a Legendrian isotopy of $S(K,\beta)$ that moves the $X_k$ and $Y_k$ crossings around the annular neighborhood $N_{xy}$ of the Lagrangian projections of~$K$. In particular, this procedure leads to a Legendrian isotopy $\Lambda_t$, $0 \leq t \leq 1$, from $S^1_{xy}(K,\beta)$ to $S^2_{xy}(K,\beta)$. Note that the crossings $p_1, \ldots, p_\lambda$ and the base points $*_1, \ldots, *_n$ can be assumed to remain in place during the isotopy. Moreover, since the $xy$-diagram of $\beta$ has no vertical tangencies, by scaling the $y$- and $z$- coordinates by an appropriately small factor, it can be assumed that, for all $0\leq t \leq 1$, the $xy$-diagram of $\Lambda_t$ does not have self-tangencies. As a result, the Reeb chords of $\Lambda_t$ appear in continuous $1$-parameter families parametrized by $t \in [0,1]$, and (identifying the corresponding generators of all $\mathcal{A}(\Lambda_t)$)
 the differential remains constant except for a finite number of handleslide disk bifurcations.
 These occur when an~$X_k$ or~$Y_k$ crossing of $\beta$ passes over or under another strand of the satellite as in Move~I of \cite[Section~6]{ENS}. During the move, there are three Reeb chords, one that is a crossing of $\beta$ and two more of the form $a^k_{i,j}$ or $c^k_{i,j}$, that all come together at a triple point. Label the three Reeb chords as $x$, $y$, $z$ so that their lengths (i.e., the difference of $z$-coordinates at endpoints) satisfy $h(x) > h(y) >h(z)$, and note that $z$ is the crossing from $\beta$. The DGAs before and after the triple point move are related by a DGA isomorphism $\phi\colon (\mathcal{A}, \partial) \rightarrow (\mathcal{A}', \partial')$ that maps~$x$ to an element of the form $x \pm yz$ or $x\pm zy$ and fixes all other generators. In particular, $\phi$ restricts to the identity on the sub-algebra generated by the $Y$-generators.
Composing all of the DGA isomorphisms from handleslide disks, we see that there is a DGA isomorphism $\varphi\colon \big(\mathcal{A}\big(S^1_{xy}(K, \beta)\big), \partial_1\big) \rightarrow \big(\mathcal{A}\big(S^2_{xy}(K, \beta)\big), \partial_2\big)$ that restricts to the identity on all $Y$-generators. As a result, $\varphi^*\colon \overline{\aug}_1\big(S^{2}_{xy}(K, \beta), \mathbb{F}_q\big) \rightarrow \overline{\aug}_1\big(S^{1}_{xy}(K, \beta), \mathbb{F}_q\big)$, $\varphi^*\epsilon = \epsilon \circ \varphi$ induces the required bijection between $\overline{\aug}_1\big(S^{1}_{xy}(K, \beta), \mathbb{F}_q\big)_{Y=0}$ and $\overline{\aug}_1\big(S^{2}_{xy}(K, \beta), \mathbb{F}_q\big)_{Y=0}$.

{\bf Step 3.} Establish that
 \begin{gather} \label{eq:Step3}
\big| \overline{\aug}_1\big(S^{2}_{xy}(K, \beta), \mathbb{F}_q\big)_{Y=0}\big| = \big(q^{n(n-1)/2}\big)^\ell\big| \overline{\aug}_1\big(S^{1}_{xz}(K, \beta), \mathbb{F}_q\big)_{Y=0}\big|.
\end{gather}

The generators of $S^1_{xz}(K,\beta)$ are identified with a subset of the generators of $S^2_{xy}(K,\beta)$, and this leads to an algebra inclusion $i\colon \mathcal{A}\big(S^1_{xz}(K,\beta)\big) \rightarrow \mathcal{A}\big(S^2_{xy}(K,\beta)\big)$. In fact, it is not hard to check that~$i$ is a DGA homomorphism; see \cite[Proposition~4.23]{NRSSZ} for a detailed explanation in the case where $\beta$ is the identity braid. The difference between the two DGAs is that $\mathcal{A}\big(S^2_{xy}(K,\beta)\big)$ has additional generators of the form $c^k_{i,j}$ and $x^k_{i,j}$ with $1 \leq k \leq \ell$, $1 \leq i<j \leq n$ that $\mathcal{A}\big(S^1_{xz}(K,\beta)\big)$ does not have. Equation~(\ref{eq:Step3}) then follows from:

{\bf Claim:} Given any $\epsilon \in \overline{\aug}_1\big(S^{1}_{xz}(K, \beta), \mathbb{F}_q\big)_{Y=0}$ and arbitrary values $\epsilon'\big(c^k_{i,j}\big) \in \mathbb{F}_q$, there exists a unique augmentation $\epsilon' \in \overline{\aug}_1\big(S^{2}_{xy}(K, \beta), \mathbb{F}_q\big)_{Y=0}$ extending these values and restricting to $\epsilon$ on $\mathcal{A}\big(S^{1}_{xz}(K, \beta)\big)$.

Given $\epsilon$ and $\epsilon'\big(c^k_{i,j}\big)$ we need to show that there are unique values $\epsilon'\big(x^k_{i,j}\big)$ for which the equations
\[
\epsilon' \circ \partial\big(c^k_{i,j}\big) = 0 \qquad \mbox{and} \qquad \epsilon' \circ \partial\big(x^k_{i,j}\big) =0, \qquad 1 \leq k \leq \ell, \  1 \leq i<j\leq n
\]
hold. Since $\partial x^k_{i,j}$ belongs to the $2$-sided ideal generated by the $Y$-generators, and $\epsilon$ vanishes on all $Y$-generators, the equations $\epsilon' \circ \partial\big(x^k_{i,j}\big) =0$ are satisfied. Note that (again computing over~$\Z/2$)
\[
\partial C_k = (I+X_k)^{\pm 1} + W_k,
\]
where $W_k$ denotes a matrix with entries in the subalgebra generated by $\mathcal{A}_{xz}^1(S(K,\beta))$ and by the~$c^k_{i,j}$ generators. The map $\mathbb{F}_q^{n(n-1)/2} \rightarrow \mathbb{F}_q^{n(n-1)/2}$ that sends a collection of values $\big(\epsilon'\big(x^k_{i,j}\big)\big)_{i<j}$ to the above diagonal part of $\epsilon'\big((I+X_k)^{\pm 1}\big)$ is a bijection. Hence, there is a unique way to choose $\epsilon'\big(x^k_{i,j}\big)$ so that the equations $\epsilon \circ\partial c^k_{i,j}=0$, $1 \leq i<j\neq n$ hold (since this is equivalent to having $\epsilon'\big((I+X_k)^{\pm1}\big) = \epsilon'(W_k)$ hold on the upper triangular entries.)

{\bf Step 4.} Establish that
 \begin{gather} \label{eq:Step4}
\big| \overline{\aug}_1\big(S^{1}_{xz}(K, \beta), \mathbb{F}_q\big)_{Y=0}\big| = (q-1)^{n-c}\big| \overline{\aug}_1\big(S^{2}_{xz}(K, \beta), \mathbb{F}_q\big)_{Y=0}\big|.
\end{gather}

First, note that when the locations of basepoints are moved around the number of $Y=0$ augmentations does not change. (This is because when a basepoint~$s_i$ moves through an $xy$-crossing~$a$, the DGAs before and afterward are related by an isomorphism $\phi$ that maps~$a$ to an element of the form $s_i^{\pm1} a$ or $a s_i^{\pm1}$ (depending on orientation of $K$ and whether $s_i$ passes the upper or lower endpoint of~$a$) and fixes all other generators; see \cite[Theorem~2.20]{NgR2012}. In particular, the induced bijection between augmentation sets, $\phi^*$, preserves the $Y=0$ subsets (since $\epsilon(a) = 0$ if and only if $\epsilon\big(t_i^{\pm1}a\big) =0$).

To understand the effect of changing the number of basepoints, suppose that $K_1$ and $K_2$ are two $xy$-diagrams, identical except that on $K_1$ a collection of base points with corresponding generators $u_1, \ldots, u_s$ appears near the location of a single base point $u$ of $K_2$. Arguing as in Step 1, shows that there is a DGA map $\phi\colon \mathcal{A}(K_2) \rightarrow \mathcal{A}(K_1)$ with $\phi(u) = u_1\cdots u_s$ and $\phi(x) = x$ for all other generators, and moreover $\phi^*\colon \overline{\aug}_1(K_1, \mathbb{F}_q) \rightarrow \overline{\aug}_1(K_2, \mathbb{F}_q)$ is surjective and $(q-1)^{s-1}$-to-$1$. (Here, $(q-1)^{s-1}$ represents the number of ways to factor an element of~$\mathbb{F}_q^*$ into a product of~$s$ elements in $\mathbb{F}_q^*$.) Since $\phi$ is the identity on Reeb chords, when applied to~$S(K,\beta)$, $\phi^*$~restricts to a~surjective, $(q-1)^{s-1}$-to-$1$ map between the $Y=0$ augmentation sets. Starting with $S^1_{xz}(K,\beta)$ and applying this procedure repeatedly with the collection of base points on each component (keeping in mind that $S^1_{xz}(K,\beta)$ has $n$ base points while $S^2_{xz}(K,\beta)$ has $c$) leads to the formula~(\ref{eq:Step4}).

{\bf Step 5.} Establish a decomposition of the form
\[
 \overline{\aug}_1\big(S^{2}_{xz}(K, \beta), \mathbb{F}_q\big)_{Y=0} = \bigsqcup_\rho W_\rho,
\]
where the disjoint union is over reduced normal rulings, $\rho$, and
\[
|W_\rho| = (q-1)^{j(\rho)+c} \cdot q^{\frac{1}{2}(-j(\rho) +{\rm rb}(S(K,\beta)))}
\]
with $c$ the number of components of $S(K,\beta)$ and ${\rm rb}(S(K,\beta))$ the number of Reeb chords of~$S_{xz}^2(K,\beta)$.

A decomposition of the entire augmentation variety $\overline{\aug}_1\big(S^{2}_{xz}(K, \beta), \mathbb{F}_q\big)$ (without imposing the $Y=0$ condition) as $\bigsqcup W_\rho$ with the disjoint union over all normal rulings (without the reduced condition) is established in Theorem~3.4 of~\cite{HenryRu}. The statement of that theorem implies that $|W_\rho| = (q-1)^{j(\rho)-c}q^{b(\rho)}$ where $b(\rho)$ is the number of ``returns''\footnote{Given a normal ruling $\rho$ for a Legendrian link $K'$, the $xz$-crossings of $K'$ that are not switches are either {\it departures} or {\it returns}. At a departure (resp.\ return) the normality condition holds to the left (resp.\ right) of the crossing but not to the right (resp.\ left) of the crossing. See \cite[Section~3]{NgSab}.} of $\rho$ plus the number of right cusps of $\rho$, and Lemma~5 of~\cite{NgSab} shows that $b(\rho) = \frac{1}{2} ( -j(\rho) + {\rm rb}(S(K,\beta)))$. Thus, it suffices to show that an augmentation $\epsilon \in \overline{\aug}_1\big(S^{2}_{xz}(K, \beta), \mathbb{F}_q\big)$ belongs to $W_\rho$ with $\rho$ {\it reduced} if and only if the $Y=0$ condition is satisfied.

A summary of the construction of the decomposition $\overline{\aug}_1\big(S^{2}_{xz}(K, \beta), \mathbb{F}_q\big)=\bigsqcup W_\rho$ is as follows. Let $K' \subset J^1\R$ be a Legendrian in plat position whose DGA is computed from resolving the front projection of $K'$ and positioning a single base point on each component of $K'$ in the loop near some chosen right cusp. The DGA of $S^2_{xz}(K,\beta)$ is of this required type. For such a~$K'$, the article~\cite{HenryRu} considers objects called Morse complex sequences (MCSs) that consist of a sequence of chain complexes and formal handleslide marks (which are vertical segments on the front diagram of $K'$) subject to several axioms motivated by Morse theory. Section~5 of \cite{HenryRu} gives a bijection between $\overline{\aug}_1(K', \mathbb{F}_q)$ and the set of ``$A$-form''\footnote{An MCS is in {\it $A$-form} if its handleslides only appear in specified locations on the front diagram of $K'$ to the left of crossings and right cusps. See \cite[Section~5]{HenryRu}.} MCSs for $K'$, denoted here $\operatorname{MCS}^A(K')$, where an augmentation $\epsilon$ corresponds to an $A$-form MCS with one handleslide mark just to the left of every crossing or right cusp, $x$, with $\epsilon(x) \neq 0$ with the handleslide coefficient determined by the value $\epsilon(x)$. In Section~4.1 of~\cite{HenryRu}, another class of MCSs called ``$SR$-form'' MCSs are considered; we will denote the set of all $SR$-form MCSs for~$K'$ as~$\operatorname{MCS}^{SR}(K')$. Each $SR$-form MCS, $\mathcal{C}$, has an associated normal ruling~$\rho$ of~$K'$, and all handleslide marks of $\mathcal{C}$ appear in collections of a~standard form near switches, returns, and right cusps of~$\rho$. In particular, at every switch of~$\rho$,~$\mathcal{C}$ must have a collection of handleslides with non-zero coefficients; returns and right cusps may or may not have handleslides. In Section~6 of~\cite{HenryRu} a bijection $\Psi\colon \operatorname{MCS}^A(K') \rightarrow \operatorname{MCS}^{SR}(K')$ and its inverse $\Phi=\Psi^{-1}$ are constructed. By definition, $\epsilon \in W_\rho$ if the corresponding $A$-form MCS, $\mathcal{C}_\epsilon$, is such that the $SR$-form MCS $\Psi(\mathcal{C}_\epsilon)$ has associated normal ruling~$\rho$.

Note that for an $A$-form or $SR$-form MCS every handleslide has some associated $xz$-crossing or right cusp. (For an $SR$-form MCS with normal ruling $\rho$, associated crossings can only be switches or returns of $\rho$ and a single crossing may have more than one handleslide associated to it.) Let $\operatorname{MCS}^A(S(K,\beta))_{Y=0}$ and $\operatorname{MCS}^{SR}(S(K,\beta))_{Y=0}$ denote those $A$-form and $SR$-form MCSs for $S^2_{xz}(K,\beta)$ that do not have any handleslide marks associated to $xz$-crossings corresponding to the $Y$-generators. (These are the groups of $n(n-1)/2$ crossings that appear in $S^2_{xz}(K,\beta)$ near the location of left cusps of~$K$.) Step~5 is then completed by the following.

\begin{Lemma}The above constructions from {\rm \cite{HenryRu}} restrict to bijections
\[
 \overline{\aug}_1\big(S^{2}_{xz}(K, \beta), \mathbb{F}_q\big)_{Y=0} \leftrightarrow \operatorname{MCS}^A(S(K,\beta))_{Y=0} \leftrightarrow \operatorname{MCS}^{SR}(S(K,\beta))_{Y=0}
\]
and an $SR$-form MCS $\mathcal{C}$ belongs to $\operatorname{MCS}^{SR}(S(K,\beta))_{Y=0}$ if and only if the associated normal ruling $\rho$ is reduced.
\end{Lemma}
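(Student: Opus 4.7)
The plan has three stages. First, the bijection $\overline{\aug}_1(S^2_{xz}(K,\beta),\mathbb{F}_q) \leftrightarrow \operatorname{MCS}^A(S(K,\beta))$ of \cite[Section~5]{HenryRu} is essentially tautological: an augmentation $\epsilon$ corresponds to the $A$-form MCS having a handleslide mark immediately to the left of each crossing or right cusp $x$ with $\epsilon(x) \neq 0$, the handleslide coefficient being $\epsilon(x)$. Hence $\epsilon$ vanishes on all $Y$-generators if and only if the corresponding $A$-form MCS has no handleslides at $Y$-crossings, and the first bijection restricts immediately.

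Second, the bijection $\Psi\colon \operatorname{MCS}^A \to \operatorname{MCS}^{SR}$ and its inverse $\Phi=\Psi^{-1}$ of \cite[Section~6]{HenryRu} are assembled from a sequence of elementary MCS moves that rearrange handleslides between $A$-positions and $SR$-positions. I would verify by inspection of these local moves that they respect the cluster structure of the $Y$-crossings: in $S^2_{xz}(K,\beta)$, the $\binom{n}{2}$ crossings $y^k_{i,j}$ near the $k$-th left cusp of $K$ form a self-contained block, separated from all other Reeb chords of the satellite by parallel crossing-free strands of the $n$-copy of $K$. Consequently the elementary moves of $\Psi$ and $\Phi$ cannot transport handleslides into or out of a $Y$-cluster from the outside, so the no-$Y$-handleslide subset is preserved in both directions, yielding the second bijection.

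The substantive content is the third stage: an $SR$-form MCS $\mathcal{C}$ with associated ruling $\rho$ lies in $\operatorname{MCS}^{SR}(S(K,\beta))_{Y=0}$ if and only if $\rho$ is reduced. Since $SR$-form requires a handleslide with nonzero coefficient at every switch of $\rho$, the $Y=0$ condition forces no $Y$-crossing to be a switch, and similarly any optional handleslide at a return must vanish when the return is a $Y$-crossing. The geometric content I would establish is that a $Y$-crossing $y^k_{i,j}$, which lies outside the $J^1[0,1]$-part of the satellite, is a switch of $\rho$ (or a return carrying a handleslide) if and only if the two parallel strands $i$ and $j$ of the $n$-copy of $K$ are paired with each other by $\rho$ at that location. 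Combined with the thin-part invariance of \cite[Lemmas~3.3--3.4]{NgR2012} recalled in Remark~\ref{rem:thin}, the existence of such pairings at any point outside the $J^1[0,1]$-part is equivalent to the failure of the reduced condition, so the claimed equivalence follows.

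The hardest step will be the combinatorial local analysis at each left cusp of $K$: I must show that for any pattern of switches and handleslide-carrying returns among the cluster of $Y$-crossings, the induced pairings on the two sides of the cluster correspond precisely to the thin-part pairings of $\rho$ near that cusp. I expect to carry this out by induction on the number of strands $n$ (equivalently on the size of the $Y$-cluster), exploiting that each $y^k_{i,j}$ interchanges a single adjacent pair of strands and that the normality condition at each switch severely restricts which pairings propagate through the cluster; everything else reduces to routine verification of the locality of the $\Psi$, $\Phi$ moves already present in \cite{HenryRu}.
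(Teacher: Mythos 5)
Your three-stage skeleton matches the paper's, and stage~1 is fine, but both of the later stages have genuine problems. In stage~2 you replace the needed statement with a stronger locality claim -- that the moves defining $\Psi$ and $\Phi$ ``cannot transport handleslides into or out of a $Y$-cluster'' -- and justify it only by saying the clusters are separated from the other Reeb chords by crossing-free strands. That justification does not work: handleslide marks slide freely through crossing-free regions of the front, and the algorithms of \cite[Section~6]{HenryRu} do move and create handleslides across the diagram, so mere separation confines nothing, and you would be left having to re-analyze the whole algorithm. The paper avoids this entirely with one observation you never make: since $K$ is in plat position, \emph{all} $Y$-crossings of $S^2_{xz}(K,\beta)$ lie to the left of every other crossing. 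Hence the $Y=0$ condition for an $A$-form or $SR$-form MCS $\mathcal{C}$ is equivalent to the single condition that the leftmost handleslide-bearing crossing $i(\mathcal{C})$ is not a $Y$-crossing, and the only property of $\Psi$, $\Phi$ one must check is $i(\Psi(\mathcal{C}))=i(\mathcal{C})$ and $i(\Phi(\mathcal{C}))=i(\mathcal{C})$, which is straightforward from their definitions. You need either this observation or an honest verification of a locality property; as written, stage~2 is a gap.

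In stage~3 your key equivalence is wrong as stated. You claim $y^k_{i,j}$ is a switch (or a return carrying a handleslide) if and only if strands $i$ and $j$ are paired by $\rho$ at that location. But two strands paired by a normal ruling never cross away from cusps, so at a switch the two crossing strands are paired with \emph{other} strands, not with each other; and whether a return carries a handleslide is free MCS data (this freedom is exactly what produces the factor $q^{b(\rho)}$ in Step~5), so it cannot be characterized by any condition on $\rho$ alone. The facts actually needed are: (i) $\rho$ is reduced if and only if it has no switches at $Y$-crossings, which is \cite[Lemma~3.2]{NgR2012} and can simply be cited rather than re-proved by your proposed induction at each left cusp; and (ii) -- the point your plan misses entirely -- if no $Y$-crossing is a switch then every $Y$-crossing is a departure, so in particular none is a return and an $SR$-form MCS has no handleslides at $Y$-crossings at all. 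Without (ii), a reduced ruling could a priori have a $Y$-crossing return, some of whose $SR$-form MCSs carry a handleslide there and some not, and then $\overline{\aug}_1\big(S^{2}_{xz}(K,\beta),\mathbb{F}_q\big)_{Y=0}$ would not be a union of whole strata $W_\rho$, breaking the count in Step~5 of the proof of Lemma~\ref{lem:count}. Combined with the plat-position observation, (i) and (ii) say precisely that $i(\mathcal{C})$ is a $Y$-crossing if and only if $\rho$ is not reduced, which is how the paper concludes.
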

\begin{proof}The first bijection is clear since an $A$-form MCS has no handleslides at $Y$ crossings if and only if the corresponding augmentation vanishes on $Y$-generators. Turning to the second bijection, given an $A$-form or $SR$-form MCS, $\mathcal{C}$, let $i(\mathcal{C})$ denote the first (from left to right) $xz$-crossing of $S^2_{xy}(K,\beta)$ that has a handleslide associated to it. The defining requirement for both $\operatorname{MCS}^A(S(K,\beta))_{Y=0}$ and $\operatorname{MCS}^{SR}(S(K,\beta))_{Y=0}$ is equivalent to $i(\mathcal{C})$ not being one of the $Y$-crossings. (The $Y$-crossings all appear to the left of the other $xz$-crossings of $S^2_{xz}(K,\beta)$ since $K$ is in plat position.) Examining the definition of $\Psi$ and $\Phi$ in Section 6 of \cite{HenryRu}, it is straightforward to see that $i(\Psi(\mathcal{C})) = i(\mathcal{C})$ and $i(\Phi(\mathcal{C})) = i(\mathcal{C})$, so that $\Psi$ and $\Phi$ restrict to provide the bijection $\operatorname{MCS}^A(S(K,\beta))_{Y=0} \leftrightarrow \operatorname{MCS}^{SR}(S(K,\beta))_{Y=0}$.

For the final statement of the lemma, note that a normal ruling of $S^2_{xz}(K,\beta)$ is reduced if and only if it has no switches at $Y$-crossings. See \cite[Lemma~3.2]{NgR2012}. Moreover, if a $Y$-crossing is a~return then there must be another $Y$-crossing somewhere to its left that is a switch. (If there are no switches in the $Y$-crossings then, it is easy to see that all $Y$-crossings are departures.) Thus, for $\mathcal{C}$ in $SR$-form, $i(\mathcal{C})$ is a $Y$-crossing if and only if the corresponding ruling is not reduced.
\end{proof}

{\bf Step 6.} Completion of the proof.

Combining the identities from Steps 1--5, we have
\begin{align*}
\big|\overline{\operatorname{Rep}}_1\big(K, \big(\mathbb{F}_q^n,0\big), B_\beta\big)\big| & = |{\rm GL}(n)|^{\ell-1}  q^{n(n-1)/2} (q-1)^{n-c}  \sum_{\rho} (q-1)^{j(\rho)+c}q^{\frac{1}{2}(-j(\rho) + {\rm rb}(S(K,\beta)))} \\
& = |{\rm GL}(n)|^{\ell-1}  q^{n(n-1)/2} (q-1)^{n}   q^{\frac{1}{2}{\rm rb}(S(K,\beta))}  \sum_{\rho} (q^{1/2}-q^{-1/2})^{j(\rho)} \\
& = |{\rm GL}(n)|^{\ell-1}  q^{n(n-1)/2} (q-1)^{n}   q^{n^2 {\rm rb}(K)/2} q^{\lambda(\beta)/2} \widetilde{R}_{S(K,\beta)}(z),
\end{align*}
where the summations are over all {\it reduced} normal rulings and at the last equality we used that the number of Reeb chords of $S^2_{xz}(K,\beta)$ is $n^2 \cdot {\rm rb}(K)+\lambda(\beta)$.
\end{proof}

\section{The multi-component case} \label{sec:multicomp}

For simplicity, we have restricted the focus of this article to the case where $K$ is a connected Legendrian knot. We close with a discussion of an appropriate modification of Theorem~\ref{thm:main} for the case of Legendrian links with multiple components.

When $K = \sqcup_{i =1}^c K_i$ is a Legendrian link with $c$ components, for $\vec{n} = (n_1, \ldots, n_c)$ with $n_i\geq 1$ one can consider the $\vec{n}$-colored Kauffman polynomial defined by satelliting each $K_i$ with the symmetrizer from the $n_i$-stranded BMW algebra in a multi-linear manner. With the $\vec{n}$-colored $1$-graded ruling polynomial defined by the analogous modification,
\[
R^1_{\vec{n},K}(q) = \left(\prod_{i=1}^c\frac{1}{c_{n_i}} \right)\sum_{\vec{\beta} \in S_{n_1} \times \cdots \times S_{n_c}} \left(\prod_{i=1}^c q^{\lambda(\beta_i)/2} \right) \widetilde{R}_{S(K,\vec{\beta})}(z)\vert_{z = q^{1/2}-q^{-1/2}},
\]
the second equality of Theorem \ref{thm:main}, modified to read $R^1_{\vec{n},K}(z)=F_{\vec{n},K}(a,q)|_{a^{-1}=0}$, follows by a~mild variation on the arguments of Sections~\ref{sec:ncolored} and~\ref{sec:Kauffman}.

To obtain an additional equality with a total representation number, one should work with the so-called {\it composable algebra} version of the LCH DGA, $(\mathcal{A}_{{\rm comp}}, \partial)$, cf.~\cite{BEE, EENS}. The underlying algebra $\mathcal{A}_{{\rm comp}}$ has generators $b_1, \ldots, b_r$ and $t_1^{\pm1}, \ldots, t_{\ell}^{\pm1}$ from Reeb chords and basepoints as well as idempotent generators $e_1, \ldots, e_c$ corresponding to the components of $K$. Moreover, $\mathcal{A}_{{\rm comp}}$ has relations
\begin{alignat*}{3}
& e_ie_j = \delta_{i,j}, \qquad & & \displaystyle \sum_{i=1}^c e_i = 1,& \\
& e_i b_k = \delta_{i,u(k)} b_k, & & b_ke_i = \delta_{i, l(k)} b_k, & \\
 & e_i t_k = t_k e_i = \delta_{i, s(k)} t_k, \qquad & & t_kt_k^{-1} = t_k^{-1}t_k = e_{s(k)}, &
\end{alignat*}
where the upper (resp.\ lower) endpoint of the Reeb chord~$b_k$ is on the component $K_{u(k)}$ (resp.\ $K_{l(k)}$) and the basepoint $t_k$ sits on the~$K_{s(k)}$ component. Note that an algebra representation $f\colon \mathcal{A}_{{\rm comp}} \rightarrow \operatorname{End}(V)$ is equivalent to a~collection of vector spaces $V_1, \ldots, V_c$ together with linear maps
\[
f(b_k)\colon \ V_{l(k)} \rightarrow V_{u(k)}
\]
assigned to Reeb chords, and invertible linear maps
\[
f(t_k)\colon \ V_{s(k)} \rightarrow V_{s(k)}
\]
assigned to base points. Here, the $V_i$ are determined from $V$ via $V_i = f(e_{i}) V$. (This is as in the correspondence between quiver representations and representations of the corresponding path algebra, see, e.g., \cite[Section~1.2]{Brion}. Except for the relation $t_kt_k^{-1} = t_k^{-1}t_k = e_{s(k)}$, the composable algebra $\mathcal{A}_{{\rm comp}}$ is precisely the path algebra associated to the quiver with vertices indexed by components of $K$ and edges corresponding to Reeb chords and base points.) The DGA differential $\partial$ on $\mathcal{A}_{{\rm comp}}$ is defined to satisfy $\partial e_i = \partial t_k= 0$ and by the usual holomorphic disk count on Reeb chords.

For $\vec{n}= (n_1, \ldots, n_c)$, with $n_i \geq 1$ as above, we denote by $\overline{\operatorname{Rep}}_1\big(K, \big(\mathbb{F}^{\vec{n}}_q, 0\big)\big)$
the set of DGA representations $f\colon (\mathcal{A}_{\rm comp}, \partial) \rightarrow \big(\operatorname{End}\big(\oplus_{i} \mathbb{F}_q^{n_i}\big), 0\big)$ that,
when viewed as quiver representations, assign the collection of vector spaces $\mathbb{F}_q^{n_1}, \ldots, \mathbb{F}_q^{n_c}$ to the components of~$K$, i.e., $f(e_i)$ is the projection to the $\mathbb{F}_{q}^{n_i}$ component. Note that to obtain a Legendrian isotopy invariant we need to adjust the normalizing factor from~\cite{LeRu} used in Definition~\ref{def:total}. This is done by defining the ($1$-graded) {\it total $\vec{n}$-dimensional representation number} of~$K$ to be
\begin{align*}
\operatorname{Rep}_1\big(K, \mathbb{F}^{\vec{n}}_q\big) & := \left(\prod_{i,j} \big|\operatorname{Hom}_{\mathbb{F}_q}\big(\mathbb{F}^{n_j}_q, \mathbb{F}^{n_i}_q\big)\big|^{-{\rm rb}_{i,j}(K)/2}\right)
\\
 & \quad {}\times \left( \prod_i \big|{\rm GL}\big(n, \mathbb{F}^{n_i}_q\big)\big|^{-\ell_i} \right)  \big| \overline{\operatorname{Rep}}_1\big(K, \big(\mathbb{F}^{\vec{n}}_q, 0\big)\big)\big| \\
 & = \left(\prod_{i,j} \big(q^{n_in_j}\big)^{-{\rm rb}_{i,j}(K)/2}\right) \\
 & \quad {} \times \left(\prod_{i} \left(q^{n_i(n_i-1)/2}   \prod_{m=1}^{n_i}\big(q^m-1\big)\right)^{-\ell_i} \right)  \big| \overline{\operatorname{Rep}}_1\big(K, \big(\mathbb{F}^{\vec{n}}_q, 0\big)\big) \big|,
\end{align*}
 where ${\rm rb}_{i,j}(K)$ is the number of Reeb chords, $b_k$, with $u(k) =i$ and $l(k) = j$ and $\ell_i$ is the number of basepoints on the $K_i$ component of $K$.

With the composable algebra used for $K$, a suitable modification of Theorem~6.1 from~\cite{LeRu} relating augmentations of the satellites $S\big(K, \vec{\beta}\big)$ with higher dimensional representations of $(\mathcal{A}_{{\rm comp}}(K), \partial)$ continues to hold. [The key point being that when components $K_i$ and $K_j$ are satellited with~$n_i$ and~$n_j$ stranded braids respectively, each Reeb chord, $b_k$, with $u(k) = i$ and $l(k) =j$ corresponds to an $n_i \times n_j$ matrix of Reeb chords in the satellite $S(K, \vec{\beta})$.] From this starting point, the analog of the first equality of Theorem~\ref{thm:main}, $\operatorname{Rep}_1\big(K, \mathbb{F}^{\vec{n}}_q\big)= R^1_{\vec{n},K}(z)$, can be deduced as in Section~\ref{sec:5}.

\subsection*{Acknowledgements}
This article is dedicated to Dmitry Fuchs to whom the second author is grateful for his generosity and support through years in grad school and beyond. Thank you Dmitry! DR acknowledges support from Simons Foundation grant~\#429536.

\pdfbookmark[1]{References}{ref}
\LastPageEnding

\end{document}